\newtheorem{thm}{Theorem}[section]
\newtheorem*{thm*}{Theorem}
\newtheorem{dfn}[thm]{Definition} 
\newtheorem*{dfn*}{Definition}
\newtheorem{cor}[thm]{Corollary}
\newtheorem*{cor*}{Corollary}
\newtheorem{prop}[thm]{Proposition} 
\newtheorem*{prop*}{Proposition} 
\newtheorem*{properties*}{Properties} 
\newtheorem{lem}[thm]{Lemma} 
\newtheorem*{lem*}{Lemma}
\newtheorem*{claim*}{Claim} 
\newtheorem*{fact*}{Fact}
\newtheorem*{qst*}{Question}
\newtheorem*{pb*}{Problem}
\newtheorem{con}[thm]{Convention}
\theoremstyle{remark}
\newtheorem*{algo*}{Algorithm} 
\newtheorem*{rem*}{Remark}
\newtheorem{rem}[thm]{Remark}
\newtheorem*{example*}{Example}
\newcounter{numEnonceTmpInterne}\newenvironment{enonce*}[1]{\theoremstyle{plain}\stepcounter{numEnonceTmpInterne} 
\def\a{enoncetmp\alph{numEnonceTmpInterne}} 
\newtheorem*{\a}{#1}\begin{\a}}{\end{\a}}
\edef\@tempa#1#2{\def#1{\mathaccent\string"\noexpand\accentclass@#2 }}
\@tempa\rond{017}
\newcommand{\es}{\emptyset}
\renewcommand{\phi}{\varphi} 
\newcommand{\m} {^{-1}}
\newcommand {\ra} {\rightarrow}
\newcommand {\xra} {\xrightarrow}
\renewcommand{\subsetneq}{\varsubsetneq}
\newcommand{\ie} {i.e.\ }
\newcommand {\cala} {{\mathcal {A}}}   
\newcommand {\calc} {{\mathcal {C}}}   
\newcommand {\cald} {{\mathcal {D}}}
\newcommand {\calg} {{\mathcal {G}}}   
\newcommand {\calh} {{\mathcal {H}}}
\newcommand {\calk} {{\mathcal {K}}}   
\newcommand {\call} {{\mathcal {L}}}   
\newcommand {\calm} {{\mathcal {M}}}
\newcommand {\calp} {{\mathcal {P}}}   
\newcommand {\calq} {{\mathcal {Q}}}   
\newcommand {\calr} {{\mathcal {R}}}   
\newcommand {\calt} {{\mathcal {T}}}   
\newcommand {\calu} {{\mathcal {U}}}
\newcommand {\calz} {{\mathcal {Z}}}
\newcommand {\bG} {{\mathbf {G}}}
\newcommand {\bQ} {{\mathbf {Q}}}
\newcommand {\bbZ} {{\mathbb {Z}}}
\newcommand{\grp}[1]{\langle #1 \rangle}
\newcommand{\Stab} {{\mathrm{Stab}}}
\newcommand{\Fix}{{\mathrm{Fix\,}}}
\newcommand{\Out} {{\mathrm{Out}}}
\newcommand{\Aut} {{\mathrm{Aut}}}
\newcommand{\Zmax}{\calz_{\mathrm{max}}}
\newcommand{\Inc}{\mathrm{Inc}}
\newcommand{\M}{{\mathrm{Mc}}}
\newcommand{\AM}{{\mathrm{Ac}}}
\newcommand {\V} {{\mathcal {V}}}
\newcommand{\RC} {RC}
\newcommand{\wh} {\widehat}
\newcommand{\hp} {\calh^{+ab}}
\newcommand{\cip} {\calc_i^{+ab}}
\newcommand{\Tcan}{T_{\mathrm{can}}}
\newcommand{\Gcan}{\Gamma_{\mathrm{can}}}
\newcommand{\Z}{{\mathbb {Z}}}
\newcommand{\inc}{\subset}
\newcommand{\incs}{\subsetneq}
\newcommand{\incd}{\supsetneq}
\newcommand{\Rt}{$\R$-tree}
\newcommand {\R} {{\mathbb {R}}}
\newcommand {\Q} {{\mathbb {Q}}}
\newcommand{\Tw}{\calt}
\newcommand{\mk}[2][t]{{#2}^{\mathrm{(#1)}}}
\renewcommand {\bQ} {{\calq}}   
\renewcommand {\bG} {{\calr}}   
\begin{document}

\title{McCool groups of  toral relatively hyperbolic groups}
\author{Vincent Guirardel and Gilbert Levitt}
\date{\today}

\maketitle

\begin{abstract}
The outer automorphism group $\Out(G)$ of a group $G$ acts on the set of conjugacy classes of elements of $G$.
McCool proved that the stabilizer $\M(\calc)$ of a finite set of conjugacy classes is finitely presented when $G$ is free. More generally, we consider the group $\M(\calh)$ of outer automorphisms $\Phi$ of $G$ acting trivially on a family of subgroups $H_i$, in the sense that  $\Phi$ has representatives $\alpha_i$ with $\alpha_i$ equal to the identity on $H_i$.

When $G$ is a toral relatively hyperbolic group, we show that these two  definitions lead to the same subgroups of $\Out(G)$, which 
we call   ``McCool groups'' of G.
We prove that such McCool groups are  of type VF (some finite index subgroup has a finite classifying space).  Being of type VF also holds for the group of automorphisms 
of $G$   preserving a splitting of $G$  over abelian groups. 

We   show that McCool groups satisfy a uniform chain condition: there is a bound, depending only on $G$, for the length of a strictly decreasing sequence of McCool groups of $G$. Similarly, fixed subgroups of automorphisms of $G$ satisfy a uniform chain condition.
\end{abstract}

\section{Introduction}

Mapping class groups of punctured surfaces  may be viewed as subgroups of $\Out(F_n)$ for some $n$ (with $F_n$ denoting the free group of rank $n$). Indeed, they consist of automorphisms of $F_n$ fixing conjugacy classes corresponding to punctures. More generally, the group of automorphisms of $F_n$ fixing a finite number of conjugacy classes was studied by McCool, who proved in particular that such groups are finitely presented \cite{McCool_fp}. 
We therefore define:

\begin{dfn}\label{mcc1}
Let $G$ be a group. Let $\calc$ be a   set of conjugacy classes $[c_i]$ of elements of $G$. 
We denote by  $\M(\calc)$   the subgroup of $\Out(G)$ consisting of outer automorphisms fixing each $[c_i]$.  
If $\calc$ is finite, we say that $\M(\calc)$ is an   \emph{elementary McCool group} of $G$ (or of  $\Out(G)$).
\end{dfn}

Work on automorphisms suggests a more general definition: 

\begin{dfn}\label{gmc} 
Let $G$ be a group. Let $\calh=\{H_i \}$ be an arbitrary     family of  subgroups of $G$. 
We say that $\varphi\in\Aut(G)$, and its image $\Phi\in\Out(G)$, \emph{act trivially on $\calh$} if $\varphi$ acts on each $H_i$ as conjugation by some  $g_i\in G$.   Note that $\Phi $ acts trivially  if and only if it has representatives $\varphi_i\in\Aut(G)$ with $\varphi_i$ equal to the identity on $H_i$.

We denote by $\M(\calh)$, or $\M_G(\calh)$, the subgroup of $\Out(G)$ consisting of all $\Phi$ acting trivially on $\calh$.

If $\calh$ is a finite  family of finitely generated subgroups, 
we say that  $\M(\calh)$ 
is a \emph{McCool group} of $ G$ (or of  $\Out(G)$).
\end{dfn}

Elementary McCool groups correspond to McCool groups  with $\calh$  a finite family of cyclic groups.  $\M(\calh)$ does not change if we replace 
  the $H_i$'s by conjugate subgroups, so it is really associated to a family of conjugacy classes of subgroups.

For a topological analogy, 
one may think of $\M(\calh)$ as the group of automorphisms of $G=\pi_1(X)$ induced by homeomorphisms of $X$ equal to the identity on   subspaces $Y_i$ with $\pi_1(Y_i)=H_i$.

McCool groups are   relevant for automorphisms for the following reason (see \cite{GL6}). 
Consider  a splitting of a group $\hat G$ as a graph of groups in which $G$ is a vertex group, and 
the $H_i$'s  
are
the incident edge groups.  
Then   any element of $\M_G(\calh)$
 extends ``by the identity'' to an automorphism of $\hat G$. 
 Topologically, if $X$ is a vertex space in a graph of spaces $\hat X$, and edge spaces are attached to subspaces $Y_i\inc X$, then 
 any homeomorphism of $X$ equal to the identity on the $Y_i$'s   extends to $\hat X$ by the identity.
 
In this paper we will consider McCool groups when $G$ is a \emph{toral relatively hyperbolic group}: 
$G$ is torsion-free, and hyperbolic relative to a finite set of finitely generated abelian subgroups. This includes in particular torsion-free hyperbolic groups,   limit groups, and groups acting freely on $\R^n$-trees. 

We will show (Corollary \ref{genmc})  that in this case 
any $\M(\calh)$ is an elementary McCool group $\M(\calc)$; in other words, 
it is equivalent for a subgroup of $\Out(G)$ to be 
an elementary McCool group  $\M(\calc)$, or to be  a McCool group   $\M(\calh)$ with $\calh$ a finite family of finitely generated groups, or to be   $\M(\calh)$ with $\calh$ arbitrary.
We will not always make the distinction in the statements given below.

 It was proved by McCool \cite{McCool_fp} that   (elementary) McCool groups of a free group are finitely presented. 
Culler-Vogtmann \cite[Corollary 6.1.4]{CuVo_moduli} proved that they are of \emph{type VF}: they have a finite index subgroup  with a finite  classifying space (i.e.\ there exists a classifying space which  is a finite complex).  We   proved in \cite{GL6} that $\Out(G)$ is of type VF if $G$ is toral relatively hyperbolic  (in particular,  $\Out(G)$ is virtually torsion-free).  Our first main results extend this to certain naturally defined subgroups of $\Out(G)$.

\begin{thm} \label{mc}
If $G$ is a toral relatively hyperbolic group, 
then any   McCool group $\M(\calh)\inc\Out(G)$ is of type VF.
\end{thm}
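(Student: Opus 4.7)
The strategy parallels the proof that $\Out(G)$ is of type VF in \cite{GL6}, adapted so as to respect the family $\calh$. First, using Corollary \ref{genmc}, I would reduce to the case where $\calh=\{\grp{c_1},\dots,\grp{c_k}\}$ is a finite family of cyclic subgroups. Enlarging $\calh$ by representatives of the (finitely many) conjugacy classes of non-cyclic maximal parabolic subgroups only changes $\M(\calh)$ by a subgroup of finite index, since $\Out(G)$ permutes these classes and being VF is inherited by both finite-index subgroups and finite-index overgroups. After this enlargement, every $H\in\calh$ is required to be elliptic in the splittings we will consider. Moreover, since $\Out(G)$ is virtually torsion-free by \cite{GL6}, so is $\M(\calh)$, and it suffices to exhibit a finite classifying space for a torsion-free finite-index subgroup $\M^0\inc\M(\calh)$.

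Next I would consider the deformation space $\cald=\cald(\calh)$ of (projective classes of) minimal simplicial $G$-trees with abelian edge stabilizers in which every $H\in\calh$ is elliptic. By Guirardel-Levitt deformation space theory this is contractible, and passing to a spine $K\inc\cald$ produces a finite-dimensional CW complex carrying a natural action of $\Out(G;\calh)$, and in particular of $\M(\calh)$. The plan is then to show that $\M^0$ acts freely and cocompactly on $K$, making $K/\M^0$ a finite $K(\pi,1)$.

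Cocompactness and the structure of point stabilizers both go through a canonical abelian JSJ decomposition of $G$ relative to $\calh$. A stabilizer in $\M(\calh)$ of a point of $K$ fits, up to finite index, in an extension whose kernel is the twist group of a JSJ decomposition (free abelian of bounded rank, built from centralizers of abelian edge groups) and whose quotient embeds in a product of McCool-type groups associated to JSJ vertex groups relative to their incident edge groups, composed with a finite group of graph automorphisms. For a toral relatively hyperbolic $G$, the JSJ vertex groups are rigid, quadratically hanging, or abelian: the corresponding relative outer automorphism groups are respectively finite, mapping class groups of punctured surfaces, and arithmetic subgroups of $GL_n(\Z)$, all of type VF. Since VF is closed under extensions, the stabilizers are VF.

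The main obstacle I expect is establishing cocompactness of the action of $\M(\calh)$ on $K$. This amounts to a JSJ-type finiteness statement relative to $\calh$: one must bound the combinatorial complexity of the graphs of groups arising from trees in $\cald$, so that only finitely many $\M(\calh)$-orbits of cells occur in $K$. The canonicity of the abelian JSJ (or tree of cylinders) relative to $\calh$ for toral relatively hyperbolic groups, combined with the finite generation and malnormality properties of maximal abelian subgroups, should provide the required bounds; once this is in hand, combining cocompactness, contractibility of $K$, and freeness of the $\M^0$-action yields a finite classifying space and concludes the proof.
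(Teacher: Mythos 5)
Your reduction step contains a concrete error. Adding representatives of the non-cyclic maximal parabolic (i.e.\ maximal abelian) subgroups to $\calh$ does \emph{not} change $\M(\calh)$ by finite index: the fact that $\Out(G)$ permutes the finitely many conjugacy classes of these subgroups only shows that a finite-index subgroup of $\M(\calh)$ \emph{preserves} each class, whereas membership in the enlarged McCool group requires acting on each such $A\cong\Z^k$ ($k\ge 2$) by conjugation, which is infinitely stronger. For instance, for $G=\Z^2\ast\Z$ and $\calh=\es$, the automorphisms acting trivially on the $\Z^2$ factor have infinite index in $\Out(G)$, since restriction to that factor maps onto $GL(2,\Z)$. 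This is exactly why the paper never adds the non-cyclic abelian subgroups to the ``trivial action'' family: it only imposes their \emph{ellipticity} (working relative to $\hp$), and must then control the genuinely infinite arithmetic image of $\M(\calh)$ on abelian vertex groups of $\Tcan$ — that is the role of Lemma \ref{arithm} (Borel--Serre) and of the compatibility statement Proposition \ref{image}. Your proposal silently discards this part of the problem.

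The second gap is the complex $K$ and the final assembly. The collection of all minimal $G$-trees with abelian edge stabilizers relative to $\calh$ is not a single deformation space, so the contractibility results of \cite{GL2} do not apply to it, and cocompactness of the $\M(\calh)$-action is precisely the ``main obstacle'' you leave unresolved; the paper's route is designed to avoid it. In the one-ended case it works with one canonical invariant tree $\Tcan$ (the tree of cylinders of the JSJ relative to $\hp$) and the exact sequence $1\to\calt\to\Out^0(\Tcan)\xra{\rho}\prod_{v}\Out(G_v;\Inc_v)$; only in the infinitely-ended case does it act on a spine, namely that of the Grushko deformation space relative to $\calh$, where edge stabilizers are trivial and cocompactness is clear (Proposition \ref{fp}). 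Moreover your conclusion is inconsistent: the stabilizers of points of $K$ are infinite (they contain twist groups and surface mapping class groups), so no torsion-free finite-index subgroup $\M^0$ acts freely and $K/\M^0$ is not a classifying space; and the fallback you then need — a cocompact action on a contractible complex with VF stabilizers implies VF, or ``VF is closed under extensions'' — cannot be invoked: this is exactly the subtlety of Remark \ref{subt}, which corrects such an argument from \cite{GL1}. Making this assembly work requires the sharper ``moreover'' statements of Theorem \ref{mcga} (a single finite-index subgroup whose intersections with all the relevant kernels are of type F) together with Corollary \ref{cor_extensionVF}, none of which appears in your outline. Finally, invoking Corollary \ref{genmc} to reduce to cyclic $\calh$ is unnecessary (Theorem \ref{mcga} is proved directly for finite families of finitely generated subgroups) and fragile, since that corollary rests on the chain condition established later in the paper; it happens not to be circular, but it buys you nothing here.
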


\begin{thm} \label{mct}
If $G$ is a toral relatively hyperbolic group, and $T$ is a simplicial tree on which $G$ acts with abelian edge stabilizers, then the group of automorphisms $\Out(T)\inc\Out(G)$    leaving $T$ invariant  is of type VF.
\end{thm}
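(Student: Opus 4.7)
The plan is to combine Theorem~\ref{mc} with the Bass--Serre structure of $\Out(T)$. The action of $\Out(T)$ on the finite quotient graph $T/G$ factors through a finite group, so passing to the kernel $\Out^0(T)$ of finite index, it suffices (by stability of type VF under finite-index overgroups) to prove $\Out^0(T)$ is VF. For this I would use the standard structural exact sequence
\[
1 \lra \calt \lra \Out^0(T) \lra \prod_{v} \Out^{\Inc}(G_v) \lra 1,
\]
where $v$ runs over vertex orbits of $T/G$, $\Out^{\Inc}(G_v)$ denotes the group of outer automorphisms of $G_v$ preserving each incident edge group up to $G_v$-conjugacy, and $\calt$ is the twist subgroup generated by Dehn twists along edges of $T/G$.

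The quotient is VF: each vertex group $G_v$ is toral relatively hyperbolic (possibly abelian), by closure properties of this class under abelian splittings, and the incident edge groups form a finite family of finitely generated abelian subgroups. Theorem~\ref{mc} then gives that the McCool group $\M_{G_v}(\Inc_v)$ is of type VF. The quotient $\Out^{\Inc}(G_v)/\M_{G_v}(\Inc_v)$ is controlled by the induced action on edge groups and embeds into a subquotient of $\prod_{e\ni v}\Aut(G_e)\cong \prod_e GL_{k_e}(\Z)$, each factor an arithmetic group and hence VF. So $\Out^{\Inc}(G_v)$ is VF as an extension of VF groups, and the finite product over vertex orbits is VF.

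The twist kernel $\calt$ is finitely generated abelian: a Dehn twist along an edge $e$ with non-trivial abelian stabilizer is parametrized by an element of the centralizer $Z_{G_v}(G_e)$, which is abelian in a toral relatively hyperbolic group (being contained in the maximal abelian subgroup containing $G_e$). Summing over the finitely many edge orbits in $T/G$, $\calt$ is finitely generated abelian, hence of type VF. Since extensions of VF groups are VF, $\Out^0(T)$ is VF, and therefore so is $\Out(T)$.

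The main obstacle is rigorously establishing the exact sequence above, especially verifying surjectivity of the right-hand map (every $\Phi \in \Out^{\Inc}(G_v)$ must lift to an element of $\Out^0(T)$, via the ``extension by the identity'' principle mentioned in the introduction) and handling the case of trivial edge stabilizers (where naive Dehn twists become partial conjugations that, modulo inner automorphisms of $G$, must be absorbed into the vertex-group factors rather than into $\calt$). These are the standard Bass--Serre combinatorial considerations already developed in \cite{GL6}; the essential new input here is Theorem~\ref{mc}, which controls the McCool factors at each vertex.
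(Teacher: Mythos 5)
Your argument has a genuine gap at its core: it relies twice on the claim that an extension of a group of type VF by a group of type VF is again of type VF (once to get $\Out(G_v;\Inc_v)$ from $\M_{G_v}(\Inc_v)$ and an ``arithmetic'' quotient, and once to get $\Out^0(T)$ from $\calt$ and the vertex product). This closure property is exactly what is \emph{not} available: type F is stable under extensions, but VF is not, and the paper flags this explicitly (Remark \ref{subt}, correcting Theorem 5.2 of \cite{GL1}); Corollary \ref{cor_extensionVF} only applies when one can exhibit a finite-index subgroup whose intersection with the kernel is of type F. This is precisely why the paper proves the ``moreover'' clause of Theorem \ref{mcga} (a specified finite-index subgroup $\Out^1(G_v;\Inc_v,\calk_{||G_v})$ meeting the McCool group in a group of type F) and then builds $\Out^2(T,\calk)$ so that $\ker\psi\cap\Out^2(T,\calk)$ is of type F before invoking Corollary \ref{cor_extensionVF}. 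Without such refinements your exact-sequence argument does not go through.

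A second serious gap is your treatment of the action on edge groups. The map $\rho$ is not onto $\prod_v\Out(G_v;\Inc_v)$; its image is only sandwiched between $\prod_v\Out(G_v;\mk\Inc_v)$ and that product, and when edge stabilizers are non-cyclic the discrepancy is infinite. Moreover, saying that the relevant quotient ``embeds into a subquotient of $\prod_e GL_{k_e}(\Z)$, each factor arithmetic and hence VF'' proves nothing: subgroups (or subquotients) of arithmetic groups need not be arithmetic or VF. Identifying the image of $\Out^0(T)$ in $\prod_e\Out(G_e)$ as an actual arithmetic group is the substantial part of the paper's proof: one first replaces $T$ by a refinement $\hat T$ (Lemma \ref{norm}) so that non-trivial incident edge stabilizers are maximal abelian in non-abelian vertex groups, then proves Proposition \ref{image} describing exactly which tuples of automorphisms of the maximal abelian subgroups are realized (a product of McCool groups, one per conjugacy class), and finally (Lemma \ref{pt2}) expresses the image as a product of groups obtained from arithmetic groups by rational homomorphisms, using Borel's theorem. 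None of this is replaced by anything in your sketch. Finally, a smaller error: when some edge stabilizers are trivial, $\calt$ is not finitely generated abelian and the partial conjugations are not ``absorbed into the vertex-group factors'' --- they lie in $\ker\rho=\calt$ by definition, which is then a product of a finitely generated abelian group with copies of non-abelian vertex groups (still of type F since vertex groups are toral relatively hyperbolic, but your description and your VF argument for $\calt$ both need repair).
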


Our most general result in this direction (Corollary \ref{thm_general}) combines these two theorems; it   implies in particular that  \emph{$\M(\calh)\cap\Out(T)$ is of type VF if $T$ is as above and $\calh$ is any family of subgroups, each of which  fixes a point in $T$.}

\begin{rem*}  
Some of these results may be extended to groups which are hyperbolic relative to virtually polycyclic subgroups, but with the weaker conclusion that the automorphism groups   are of type $F_\infty$ (see \cite{GL_extension}).   On the other hand,   one can show that, if there exists a hyperbolic group which is not residually finite, 
 then there exists a hyperbolic group with $\Out(G)$ not virtually torsion-free (hence not VF).
\end{rem*}
Our second main result is the following:

\begin{thm} \label{mccc}
Let $G$ be a toral relatively hyperbolic group. 
  McCool groups  of $G$ satisfy a \emph{uniform chain condition:}
 there exists $C=C(G)$ such that, if 
 $$\M(\calh_0)\incd \M(\calh_1)\incd\dots\incd\M(\calh_p ) $$ 
is a strictly decreasing chain of   McCool groups  in $\Out(G)$, then $p\le C$. 
\end{thm}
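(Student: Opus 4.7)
The plan is to associate to each McCool group in the chain a canonical combinatorial object whose complexity is uniformly bounded in terms of $G$, and to show that strict descent forces a strict decrease (in some well-ordered sense) of this complexity. The natural candidate is the canonical primary abelian JSJ decomposition of $G$ relative to $\calh$: it exists because $G$ is toral relatively hyperbolic, and $\M(\calh)$ acts on it by canonicality.

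I would first invoke Corollary \ref{genmc} to reduce to the case where every $\calh_i$ is a finite family of finitely generated subgroups. Given the strict chain $\M(\calh_0)\supsetneq \M(\calh_1)\supsetneq\cdots\supsetneq\M(\calh_p)$, one can then enlarge each $\calh_i$ inductively by adjoining a subgroup of $\calh_{i+1}$ witnessing the strict inclusion $\M(\calh_i)\supsetneq\M(\calh_{i+1})$ (an $H\in\calh_{i+1}$ on which some $\Phi\in\M(\calh_i)\setminus\M(\calh_{i+1})$ fails to act trivially). After this adjustment, one may assume $\calh_0\inc\calh_1\inc\cdots\inc\calh_p$ without loss of generality, while the McCool groups remain strictly decreasing.

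Next, to each $\calh_i$ I would attach the canonical primary abelian JSJ tree $T_i$ of $G$ relative to $\calh_i$. Then $\M(\calh_i)$ acts on $T_i$, and fits into a short exact sequence $1\to K_i\to\M(\calh_i)\to Q_i\to 1$, where $K_i$ is generated by modular automorphisms (twists along edges, mapping class groups of QH vertex surfaces, automorphisms of abelian vertex groups preserving peripheral structure) and $Q_i$ is a finite group of graph symmetries. I would then define a lex complexity $c(T_i)$ by aggregating, over the pieces of $T_i/G$: the number of edge orbits; for each QH vertex, $-\chi$ and boundary/peripheral data of the underlying surface; for each abelian vertex, its rank modulo the peripheral subgroup; and, recursively, a McCool-type complexity for the restriction of $\M(\calh_i)$ to each rigid vertex group.

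The main obstacle is the precise translation between strict descent of McCool groups and strict descent of $c(T_i)$: it may happen that $T_i=T_{i+1}$ while $\M(\calh_i)\supsetneq\M(\calh_{i+1})$, because the additional subgroups of $\calh_{i+1}$ cut QH surfaces further, rigidify abelian peripheral data, or strictly shrink a McCool subgroup attached to a rigid vertex. Handling this requires an inductive argument over the hierarchy of vertex groups: the restriction of $\M(\calh_i)$ to each rigid vertex stabilizer is itself a McCool group of that vertex group (with a simpler toral relatively hyperbolic structure), to which the inductive hypothesis applies. Once established, the global bound $C=C(G)$ arises as a sum over this hierarchy of local complexities, ultimately controlled by the first Betti number of $G$ together with the ranks of its parabolic subgroups, both of which are invariants of $G$ alone.
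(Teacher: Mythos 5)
Your overall strategy (nest the families, pass to canonical abelian JSJ decompositions relative to $\calh_i$, and analyse the situation vertex by vertex) is indeed the paper's general line, but the two steps that carry all the difficulty are missing, and the substitutes you propose would not work. First, you never establish a bound, depending only on $G$, on the number of distinct trees $T_i$ occurring along the chain. This is not automatic: since groups of $\calh_{i+1}\setminus\calh_i$ need not be elliptic in $T_i$, there is in general no domination map between consecutive JSJ trees, so no monotone ``lex complexity'' of the quotient graphs of groups is available for free; the paper has to replace each $T_i$ by a root-closed modification $U_i$ and prove a genuine finiteness theorem for such $\RC$-JSJ trees (Propositions \ref{access}, \ref{dom2} and \ref{edstab}), where root-closedness is exactly what controls the possibility of folding edges within one orbit. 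Your complexity $c(T_i)$ is asserted to decrease under strict descent of the McCool groups, but no such implication holds: as you yourself note, the tree can be constant while the McCool groups strictly decrease, and conversely nothing forces $c$ to drop when the tree changes. Second, when the tree is constant, your proposed recursion over rigid vertex groups is not well founded: a rigid vertex group admits no splitting over abelian groups relative to its incident edge groups and the induced family, so there is no further JSJ decomposition to recurse on, and the restriction of $\M(\calh_i)$ to a rigid vertex group need not itself be a McCool group of that vertex group. What is actually needed there is finiteness of the image in $\Out(G_v)$ (Bestvina--Paulin plus Rips theory, Lemma \ref{lem_fini}), together with a bound on the order of that finite image which is \emph{uniform over all families} $\calh_i$; the paper obtains this from the vertex finiteness theorem of \cite{GL_vertex} (finitely many isomorphism types of vertex groups of abelian splittings of $G$) combined with virtual torsion-freeness of $\Out(G_v)$, and from bounds on finite subgroups of $GL(k,\Z)$ at abelian vertices, while QH vertices require showing that the set of universally elliptic elements stabilizes. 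None of these uniformity inputs appear in your sketch, and your closing claim that the bound is ``controlled by the first Betti number of $G$ together with the ranks of its parabolic subgroups'' is unsupported; indeed Remark \ref{ccab} shows that without such non-abelian input the naive abelian analogue of the chain condition fails. The correct endgame is also different from yours: one shows that all the groups $\M(\calh_i)$ (intersected with $\Out^0(U)$) contain one \emph{common} subgroup $\rho\m\bigl(\prod_v\Out^1(G_v)\bigr)$ of uniformly bounded index (Lemma \ref{outun}), which immediately bounds the length of a strictly decreasing chain.

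Two smaller points. Invoking Corollary \ref{genmc} at the start is circular in the paper's logic, since that corollary is deduced from Theorem \ref{mccc}; it is also unnecessary, because McCool groups are by definition given by finite families of finitely generated subgroups. And your nesting adjustment goes in the wrong direction: adjoining to $\calh_i$ a witness $H\in\calh_{i+1}$ changes $\M(\calh_i)$ (that is what a witness does) and does not produce $\calh_i\inc\calh_{i+1}$; the correct move is to replace $\calh_{i}$ by $\calh_0\cup\dots\cup\calh_{i}$, which leaves every $\M(\calh_i)$ unchanged because $\M(\calh_i)\inc\M(\calh_j)$ for $j\le i$.
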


This is based, among other things, on the vertex finiteness proved in  \cite{GL_vertex}:
if $G$ is toral relatively hyperbolic, then all vertex groups occurring in splittings of $G$ over abelian groups
lie in finitely many isomorphism classes. 

The chain condition, proved in Section \ref{pfcc} for McCool groups $\M(\calh)$ with $\calh$ a finite family of finitely generated groups,  implies:
\begin{cor}\label{genmc}
Let $G$ be a toral relatively hyperbolic group. 
If  $\calh $ is a (possibly infinite) family of (possibly infinitely generated) subgroups $H_i\inc G$,
there exists a finite set of conjugacy classes   $\calc$ 
such that $\M(\calh)=\M(\calc)$. In particular, any $\M(\calh)$ is a McCool group, and any McCool group  is an elementary McCool group $\M(\calc)$. 
\end{cor}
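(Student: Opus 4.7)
The plan is to combine the uniform chain condition of Theorem \ref{mccc} with a CSA-type lemma that converts subgroup data into conjugacy-class data.

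The key lemma I would establish is: for any subgroup $H \subseteq G$,
$$\M(H) = \bigcap_{h \in H} \M([h]).$$
The inclusion $\subseteq$ is immediate. For $\supseteq$, fix $\Phi$ in the right-hand side and a representative $\varphi \in \Phi$, so that $\varphi(h) = g_h h g_h^{-1}$ for each $h \in H$; the goal is to replace the family $(g_h)$ by a single $g \in G$ with $\varphi(h) = g h g^{-1}$ for every $h \in H$. By CSA, each non-trivial $h$ lies in a unique maximal abelian subgroup $M_h$ equal to its centralizer. If $H$ is abelian, $H$ sits in a single such $M$; by adjusting $\varphi$ via an inner automorphism we may assume $\varphi(h_0) = h_0$ for some non-trivial $h_0 \in H$, which forces $\varphi(M) = M$, and then malnormality of $M$ forces $\varphi(h') = h'$ for every $h' \in H$. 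If $H$ is non-abelian, pick a non-abelian finitely generated $K_0 \subseteq H$: two non-commuting elements of $K_0$ lie in distinct maximal abelian subgroups meeting trivially by malnormality, so $Z_G(K_0) = \{1\}$; a cocycle argument exploiting $\varphi(kk') = \varphi(k)\varphi(k')$ combined with malnormality produces a single $g$ with $\varphi(k) = g k g^{-1}$ for all $k \in K_0$, and the vanishing of $Z_G(K_0)$ makes this $g$ unique and propagates the identity $\varphi(k) = g k g^{-1}$ to every finitely generated $K \supseteq K_0$, and hence to all of $H$.

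Granting the key lemma, the corollary follows quickly. Given $\calh = \{H_i\}$, set $\calc_0 := \{[h] \mid h \in H_i \text{ for some } i\}$; the lemma applied to each $H_i$ yields
$$\M(\calh) = \bigcap_i \M(H_i) = \bigcap_{[c] \in \calc_0} \M([c]).$$
The poset $\{\M(\calc) : \calc \subseteq \calc_0 \text{ finite}\}$ is closed under intersection, since $\M(\calc) \cap \M(\calc') = \M(\calc \cup \calc')$, and by Theorem \ref{mccc} it contains no infinite strictly descending chain. It therefore has a minimum element, attained at some finite $\calc$, and this minimum coincides with $\bigcap_{[c] \in \calc_0} \M([c]) = \M(\calh)$.

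The main obstacle is the non-abelian case of the key lemma: upgrading ``class-preserving on each cyclic subgroup of $K_0$'' to ``acts on $K_0$ by a single conjugation''. The abelian case reduces cleanly to malnormality, but the non-abelian case requires a genuine rigidity argument combining the cocycle identity with CSA and malnormality of maximal abelian subgroups in toral relatively hyperbolic groups; it is exactly where the argument would break for classes of groups admitting non-trivial class-preserving outer automorphisms on finitely generated subgroups.
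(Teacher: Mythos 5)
Your architecture coincides with the paper's: the paper first proves exactly your key lemma (its Lemma \ref{minos}: if $\alpha(h)$ is conjugate to $h$ for every $h\in H$, then $\alpha$ acts on $H$ as conjugation by a single element), deduces $\M(\calh)=\M(\calc_\calh)$ where $\calc_\calh$ is the set of conjugacy classes of elements of the $H_i$'s, and then reduces to a finite subfamily via the uniform chain condition (its Proposition \ref{cinf}, proved by writing $\calc_\calh$ as an increasing union of finite families so that the $\M(\calc_i)$ form a descending chain). Your poset-with-minimum formulation of that last step is a harmless variant, and your abelian case (compose with an inner automorphism so that $\varphi$ fixes a non-trivial $h_0$, then use uniqueness and malnormality of the maximal abelian subgroup) is the paper's argument; note also that Theorem \ref{mccc} legitimately applies to finite sets of conjugacy classes, since these are finite families of finitely generated (cyclic) subgroups, which is the setting in which the chain condition is actually proved in Section \ref{pfcc}.

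The genuine gap is precisely the point you flag yourself: the non-abelian case of the key lemma. Knowing $\varphi(k)=g_kkg_k^{-1}$ for each $k$ determines $g_k$ only up to the centralizer of $k$, which is an infinite maximal abelian subgroup, and CSA plus malnormality by themselves do not produce a common conjugator; the identity $\varphi(kk')=\varphi(k)\varphi(k')$ gives no usable cocycle relation among the $g_k$'s because of this ambiguity. The statement you need --- a map on a non-abelian subgroup that preserves each conjugacy class of $G$ is a conjugation --- is a substantive rigidity theorem for relatively hyperbolic groups, and the paper does not prove it either: it quotes Lemma 5.2 of Minasyan--Osin \cite{MiOs_normal} (valid for homomorphisms $H\to G$, not only automorphisms), alternatively Corollary 7.4 of \cite{AMS_commensurating}. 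The known proofs exploit (relative) hyperbolicity in an essential way, typically through elements such as $ab^n$ for large $n$ or the machinery of commensurating endomorphisms, not merely the malnormality of maximal abelian subgroups. Your uniqueness-and-propagation remark (trivial centralizer of a non-abelian finitely generated $K_0$ forces a unique conjugator, which then propagates to all of $H$) is correct, but it only transfers the statement from $H$ to finitely generated $K_0$; the core case remains unproved in your sketch. So the proposal is correct in outline and matches the paper, but it is incomplete until you either prove this rigidity step or cite it as the paper does; with that input the rest of your argument goes through.
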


The chain condition also implies that 
no McCool group  $\M(\calh)\inc\Out(G)$ is conjugate to a proper subgroup. Note, however, that McCool groups may fail to be co-Hopfian (they may be isomorphic to proper subgroups). To illustrate the variety of McCool groups, we show:

\begin{prop} \label{infmc}
$\Out(F_n)$ contains infinitely many non-isomorphic McCool groups if $n\ge4$; it  contains infinitely many non-conjugate McCool groups  if $n\ge3$.
\end{prop}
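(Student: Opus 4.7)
The plan is to exhibit, for each of the two assertions, an explicit infinite family of conjugacy classes $\{[c_k]\}_{k\ge 1}$ in $F_n$ whose McCool groups $\M(\{c_k\})$ satisfy the appropriate distinguishing property.

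For the non-conjugate statement ($n\ge 3$), the first step is to choose the $[c_k]$ to lie in pairwise distinct $\Out(F_n)$-orbits. Infinitely many such orbits exist on non-primitive conjugacy classes; for example, one may take $c_k=[a,b^k]$ inside a rank-$2$ free factor $\langle a,b\rangle\inc F_n$ and distinguish orbits via Whitehead graphs of cyclically reduced representatives. The second step is a rigidity argument: if some $\Phi\in\Out(F_n)$ satisfies $\Phi\,\M(\{c_k\})\,\Phi\m=\M(\{c_\ell\})$, then $\Phi([c_k])$ is a conjugacy class fixed by $\M(\{c_\ell\})$. By arranging the $c_k$'s to be sufficiently generic (not proper powers, and with a cyclic JSJ of $F_n$ relative to $\langle c_k\rangle$ rigid enough that the only non-trivial conjugacy classes fixed by $\M(\{c_k\})$ are $[c_k^{\pm 1}]$), we conclude $\Phi([c_k])\in\{[c_\ell^{\pm 1}]\}$, contradicting the distinctness of orbits.

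For the non-isomorphic statement ($n\ge 4$), we refine the construction so that the abstract isomorphism type of $\M(\{c_k\})$ also varies with $k$. The analysis of $\M(\{c_k\})$ proceeds via the cyclic JSJ decomposition $T_k$ of $F_n$ relative to $\langle c_k\rangle$: up to finite index, $\M(\{c_k\})$ is a direct product of a Dehn twist group (one $\bbZ$ per edge of $T_k/F_n$) and the mapping class groups rel boundary of its QH-vertices, twisted by automorphisms of rigid vertex groups. The bound $n\ge 4$ provides enough flexibility in $F_n$ to construct $c_k$'s whose JSJs exhibit a distinguished characteristic piece (for instance, a QH surface of varying topological type, or a rigid vertex group whose $\Out$ varies) giving McCool groups of pairwise non-isomorphic characteristic direct-factor structure.

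The main obstacle is the non-isomorphic case: for fixed $n$ many obvious numerical invariants of $\M(\{c_k\})$ (virtual cohomological dimension, first Betti number, abelianization rank, number of JSJ edges) are a priori bounded, so distinguishing infinitely many isomorphism types requires recognizing finer characteristic substructure—for example identifying a specific MCG or twist direct factor as a characteristic subgroup, then varying its abstract type with $k$. The hypothesis $n\ge 4$ is essential: in $F_3$ the cyclic JSJ structures are too constrained to realize infinitely many iso types by this method, which explains why the proposition only claims non-conjugacy (not non-isomorphism) for $n=3$.
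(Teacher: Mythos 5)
There is a genuine gap, and it sits exactly where the real content of the proposition lies. For the non-isomorphism half ($n\ge 4$) your text is a plan, not a proof: you acknowledge that the obvious numerical invariants of $\M(\{c_k\})$ are bounded, and then propose to vary either "a QH surface of varying topological type" or "a rigid vertex group whose $\Out$ varies" — but neither mechanism can produce infinitely many isomorphism types for fixed $n$. The QH surfaces occurring in cyclic JSJ decompositions of $F_n$ have free fundamental group of rank at most $n$ and a bounded number of boundary components, so only finitely many homeomorphism types occur; rigid vertex groups range over finitely many isomorphism types (vertex finiteness) and in any case contribute only \emph{finite} images to the McCool group (Lemma \ref{lem_fini}), while the group of twists is free abelian of bounded rank. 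So the "characteristic direct-factor structure" you invoke takes only boundedly many values, and the finer invariant you say is needed is never produced. The paper's actual mechanism is quite different: it sets $H=\langle a,b,c\rangle$ and $P_w=\langle a,b,c,t\mid tct\m=w\rangle\cong F_3$ with $w=a^kb^k$, shows that $\M_{P_w}(\{H\})$ is exactly the cyclic group of twists $\grp{\Phi_w}$ of this one-edge splitting, and then, for $F_4=P_w*\langle d\rangle$ and the family $\{H,\langle d\rangle\}$, identifies the McCool group with the preimage of $\grp{\Phi_w}$ in $\Aut(P_w)$, i.e.\ with the mapping torus $Q_w=\langle a,b,t,u\mid ua=au,\,ub=bu,\,utu\m=a^kb^kt\rangle$; the distinguishing invariant is the torsion $\Z/k\Z$ in the abelianization of $Q_w$. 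Nothing in your sketch yields an invariant of this kind, and note that the paper works with McCool groups of \emph{subgroups} (Definition \ref{gmc}, reduced to elementary ones only afterwards via Corollary \ref{genmc}), which is what forces the McCool group to be exactly the twist/mapping-torus group rather than something large and hard to control.

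The non-conjugacy half ($n\ge 3$) also has an unproven load-bearing step. The skeleton is fine: if $\Phi\M(\{c_k\})\Phi\m=\M(\{c_\ell\})$ then $\Phi([c_k])$ is fixed by all of $\M(\{c_\ell\})$. But the claim that one can arrange "the only non-trivial conjugacy classes fixed by $\M(\{c_k\})$ are $[c_k^{\pm1}]$" is false as stated (all powers $[c_k^m]$ are fixed; relatedly $\M(\{c\})=\M(\{c^2\})$, so distinct $\Out$-orbits alone never suffice), and even the corrected version (only powers of $c_k$ are fixed) is precisely the hard point, asserted here by "sufficient genericity" with no argument. Worse, for your suggested $c_k=[a,b^k]$ lying in a rank-two free factor, $F_n$ is freely \emph{de}composable relative to $c_k$, so the appeal to a rigid cyclic JSJ does not apply as written; one would have to analyse the relative Grushko decomposition and rule out other globally fixed classes (for instance decide whether a class such as $[[a,b]]$, preserved up to inversion by all of $\Out(\langle a,b\rangle)$, is moved by some element of $\M(\{c_k\})$ that does not preserve the free factor). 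The paper avoids all of this: its $n=3$ McCool groups are the cyclic groups $\grp{\Phi_w}=\M_{P_w}(\{H\})$, the identification with the group of twists coming from the fact that the Bass--Serre tree of the HNN extension is the canonical JSJ tree relative to $H$ (it equals its tree of cylinders because $a^kb^k$ is not a proper power), and pairwise non-conjugacy is read off from the action on the abelianization, i.e.\ from non-conjugate transvection-type matrices in $GL(3,\Z)$. So your route could perhaps be completed, but the two steps you leave unproved (the fixed-class rigidity for well-chosen $c_k$, and any construction at all for the non-isomorphism statement) are the substance of the proposition.
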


It may be shown that the bounds on $n$ are sharp (see the appendix). 
 We will also show in the appendix that, if  $G$ is a  torsion-free  \emph{one-ended} hyperbolic group, then $\Out(G)$ only contains finitely many McCool groups up to conjugacy.

Say that $J\inc G$ is   \emph{a fixed subgroup} if there is a family of automorphisms $\alpha_i\in\Aut(G)$ such that $J=\cap_i\Fix\alpha_i$,  with $\Fix\alpha=\{g\in G \mid \alpha(g)=g\}$. The chain condition also implies:

\begin{thm}\label{uccfix} 
Let $G$ be a toral relatively hyperbolic group. There is a constant $c=c(G)$ such that, if $J_0\incs J_1\incs\dots\incs J_p$ is a strictly ascending chain of fixed subgroups, then $p\le c$. 
\end{thm}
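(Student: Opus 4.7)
The plan is to convert an ascending chain of fixed subgroups into an (almost) descending chain of McCool groups, and then invoke Theorem~\ref{mccc}. To each $J_k$ I attach $\M_k := \M(\{J_k\})\subseteq\Out(G)$; by Corollary~\ref{genmc} this is a McCool group of $G$. Since any outer automorphism with a representative equal to the identity on $J_k$ has, \emph{a fortiori}, one equal to the identity on $J_{k-1}\subseteq J_k$, we obtain $\M_0\supseteq\M_1\supseteq\cdots\supseteq\M_p$.

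The core step is to show that this inclusion is \emph{strict} whenever $J_{k-1}$ is non-abelian. Write $A_J:=\{\alpha\in\Aut(G):\alpha|_J=\id\}$, so that $J=\Fix(A_J)$ for any fixed subgroup $J$ and $\M(\{J\})$ is exactly the image of $A_J$ in $\Out(G)$. If $\M_{k-1}=\M_k$, then each $\alpha\in A_{J_{k-1}}$ differs from some $\beta\in A_{J_k}$ by an inner automorphism $\mathrm{ad}_g$ that itself fixes $J_{k-1}$ pointwise, forcing $g\in Z_G(J_{k-1})$. Intersecting fixed subgroups in the resulting identity $A_{J_{k-1}}=A_{J_k}\cdot\{\mathrm{ad}_g:g\in Z_G(J_{k-1})\}$ yields
\[
J_{k-1}\;=\;J_k\cap Z_G(Z_G(J_{k-1})).
\]
Now toral relatively hyperbolic groups are CSA, so the centralizer of any non-abelian subgroup is trivial. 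Hence if $J_{k-1}$ is non-abelian, $Z_G(Z_G(J_{k-1}))=G$, which forces $J_{k-1}=J_k$, a contradiction. Therefore $\M_{k-1}=\M_k$ can only occur when $J_{k-1}$ is abelian.

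Let $T:=\max\{k:J_k\text{ is abelian}\}$ (with $T=-1$ if $J_0$ is non-abelian). By the previous step, the chain
\[
\M_{T+1}\supsetneq\M_{T+2}\supsetneq\cdots\supsetneq\M_p
\]
is strictly decreasing, so Theorem~\ref{mccc} gives $p-T-1\leq C(G)$. To control $T$, note that if $T\geq 1$ then $J_1,\dots,J_T$ are non-trivial abelian subgroups; by CSA they are contained in a common maximal abelian $M$, whose rank $r$ is at most a constant $r_G$ depending only on $G$. Each $\alpha\in A_{J_k}$ fixes some $x\in J_k\setminus\{1\}$, and hence $\alpha(M)=\alpha(Z_G(x))=Z_G(x)=M$, so $\alpha$ restricts to an element of $\Aut(M)$. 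Thus each $J_k$ is a fixed subgroup of $M\cong\Z^r$, i.e.\ an intersection of kernels of $\Z$-linear endomorphisms of $\Z^r$; such subgroups are \emph{pure} in $\Z^r$, and pure subgroups strictly increasing in inclusion have strictly increasing rank, so $T\leq r_G$. Combining, $p\leq r_G+1+C(G)$, and we may take $c(G):=r_G+1+C(G)$.

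The main obstacle is the CSA-based analysis of the equality $\M_{k-1}=\M_k$, which turns an ascending chain of $J_k$'s into a strictly descending tail of $\M_k$'s in the non-abelian regime. Once this is in place, Theorem~\ref{mccc} handles the bulk of the chain and a routine pure-subgroup argument in $\Z^r$ takes care of the abelian prefix.
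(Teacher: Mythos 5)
Your proof is correct, but it takes a different route from the paper's. The paper works at the level of $\Aut(G)$: it sets $\AM(\es,J_i)=\{\alpha\in\Aut(G):\alpha|_{J_i}=\id\}$, notes that the chain $\AM(\es,J_0)\supsetneq\AM(\es,J_1)\supsetneq\cdots$ is automatically strict because each $J_i$ is a fixed subgroup (so $J_i=\Fix(\AM(\es,J_i))$, and equality of consecutive terms would force $J_{i+1}\subseteq J_i$), and then invokes Corollary \ref{mca}, i.e.\ the uniform chain condition for groups $\AM(\calh,H_0)$, which was obtained from the extension $1\to Z_G(H_0)\to\AM(\calh,H_0)\to\M(\calh')\to1$ of Proposition \ref{mcaut} together with Theorem \ref{mccc} and the observation that chains of centralizers have length at most $2$. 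You instead stay in $\Out(G)$ with $\M_k=\M(\{J_k\})$, where strictness is no longer automatic, and you correctly identify the only obstruction: using $\M_k=\mathrm{image\ of\ }A_{J_k}$ and $J_k=\Fix(A_{J_k})$, the equality $\M_{k-1}=\M_k$ forces $J_{k-1}=J_k\cap Z_G(Z_G(J_{k-1}))$, which by CSA (trivial centralizers of non-abelian subgroups) is impossible unless $J_{k-1}$ is abelian; the abelian prefix is then bounded by a purity/rank argument inside a maximal abelian subgroup $M\cong\Z^r$, using that $Z_G(x)=M$ for $x\in J_k\setminus\{1\}$ so that $A_{J_k}$ restricts to $\Aut(M)$ and each $J_k$ is an intersection of kernels, hence pure. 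In effect you have re-proved, by hand at the $\Out$ level, the centralizer bookkeeping that the paper packaged into Proposition \ref{mcaut} and Corollary \ref{mca}: the paper's version is shorter and yields the better correction term ($+2$ from the length-two centralizer chain $G\supsetneq M\supsetneq\{1\}$ versus your $r_G+1$), while yours is self-contained modulo Theorem \ref{mccc} and Corollary \ref{genmc} and makes explicit exactly where strictness of the $\Out$-level chain can fail. Both arguments ultimately rest on the same two inputs (Theorem \ref{mccc} via Corollary \ref{genmc}, and the structure of centralizers in toral relatively hyperbolic groups), so yours is a legitimate alternative proof with the same dependencies.
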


This was proved by Martino-Ventura  
\cite{MaVe_fixed}
for $G$ free, with $c(F_n)=2n$.  In \cite{GL7}, we will apply Theorems \ref{mc} and  \ref{uccfix} to the study of stabilizers for the action of $\Out(G)$ on spaces of \Rt s.

As explained above, one does not get new groups by allowing the set $\calc$ in Definition \ref{mcc1} to be infinite, or by considering arbitrary subgroups as in Definition \ref{gmc}. 
  The following definition provides a genuine generalization. 

\begin{dfn}
Let $G$ be a group, and $\calc$   a finite set of conjugacy classes  $[c_i]$. We write $\calc\m$  for the set of classes $[c_i\m]$. Let $\wh\M(\calc)$ be the subgroup of $\Out(G)$ consisting of automorphisms leaving $\calc\cup\calc\m$ globally invariant; it contains $\M(\calc)$ as a normal subgroup of finite index.
We say that $\wh\M(\calc)$ is an \emph{extended elementary McCool group} of $G$. 
\end{dfn}

More generally, if $\calh $ is a finite family of  subgroups, 
one can  define 
finite extensions of $\M(\calh)$ by allowing the $H_i$'s to be permuted, or the action on $H_i$ to be only ``almost'' trivial.

\begin{prop} \label{mccet}
Given  a toral relatively hyperbolic group $G$, there exists a number $C$ such that,   if a subgroup  $\wh M\subset \Out(G)$  contains a   group $  \M(\calh)$ with finite index, then 
 the index $[\wh M:\M(\calh)]$ is bounded by $C$.

In particular, for $\calc$ finite, the index of $\M(\calc)$ in $\wh\M(\calc)$ is bounded by a constant depending only on $G$.
\end{prop}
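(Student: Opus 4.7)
The plan is to combine the uniform chain condition (Theorem \ref{mccc}) with the bound on orders of finite subgroups of $\Out(G)$ coming from Theorem \ref{mc} applied to $\calh=\emptyset$: the group $\Out(G)$ itself is then VF, hence virtually torsion-free, and so there is a uniform bound $B(G)$ on the orders of its finite subgroups.

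Set $M:=\M(\calh)$ and $n:=[\wh M:M]$. First I pass to the normal core $K:=\bigcap_{\wh\phi\in\wh M}\wh\phi M\wh\phi\m$. Since each conjugate $\wh\phi M\wh\phi\m$ equals $\M(\wh\phi(\calh))$, the distinct $\wh M$-conjugates of $M$ form a finite collection $L_1=M,L_2,\dots,L_r$ with $r\le n$, each of them a McCool group. Their intersection $K=\M(\bigcup_i\wh\phi_i(\calh))$ is again a McCool group by Corollary \ref{genmc}, and it is normal in $\wh M$. Applying the chain condition to the descending chain $L_1\supseteq L_1\cap L_2\supseteq\dots\supseteq K$ of McCool groups shows that at most $C(G)+1$ of its terms are distinct, so $K$ is realized as the intersection of $M$ with at most $C(G)$ of its $\wh M$-conjugates. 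Since $[\wh M:M]\le[\wh M:K]$, it suffices to bound $[\wh M:K]$ by a constant depending only on $G$.

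The remaining task, which is the main obstacle, is to bound the finite quotient $\wh M/K$, where $K\vartriangleleft\wh M$ is now a normal McCool subgroup. I let $\wh M$ act on $K$ by conjugation, giving a homomorphism $\wh M\to\Aut(K)$. The idea is that the centralizer $C_{\wh M}(K)$ must be small: since $K$ is the pointwise stabilizer of a canonical JSJ-type decomposition of $G$ over abelian subgroups, any element of $\Out(G)$ commuting with $K$ is forced to act almost trivially on that decomposition, so $C_{\wh M}(K)$ (together with $Z(K)$) must embed in a finite subgroup of $\Out(G)$, giving a bound by $B(G)$. The remaining quotient $\wh M/(C_{\wh M}(K)\cdot K)$ embeds in $\Out(K)$ as a finite group of symmetries of the canonical JSJ decomposition of $G$ associated with the family defining $K$; vertex finiteness from \cite{GL_vertex}, a central input in the proof of Theorem \ref{mccc}, then bounds such symmetry groups uniformly in $\calh$. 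Chaining these uniform bounds together yields the constant $C=C(G)$. The hard step is precisely this last point: making the ``canonical JSJ'' argument precise enough to give control on the symmetry group that does not depend on the complexity of the family $\calh$.
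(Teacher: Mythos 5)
Your reduction steps are sound but they stop just short of the actual content. Passing to the normal core is fine: the conjugates $\wh\phi\,\M(\calh)\,\wh\phi\m=\M(\wh\phi(\calh))$ are McCool groups, and their intersection $K=\M(\bigcup_i\wh\phi_i(\calh))$ is again one by Corollary \ref{genmc}; in fact, writing $\M(\calh)=\M(\calc')$ with $\calc'$ finite and letting $\calc$ be the $\wh M$-orbit of $\calc'$, your $K$ is exactly $\M(\calc)$ with $\calc$ finite and $\wh M$-invariant, which is how the paper begins. (Note, though, that the chain condition plays no real role here: $K$ has finite index simply because it is a finite intersection of finite-index subgroups, and the chain condition bounds lengths of chains, not indices, so it gives no control whatsoever on $[\wh M:K]$.) The gap is the second half. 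The claim that $C_{\wh M}(K)$, modulo $Z(K)$, embeds in a finite subgroup of $\Out(G)$ of order at most $B(G)$ is not justified: $K$ is not the ``pointwise stabilizer of a canonical JSJ decomposition'' (a McCool group contains the group of twists and the mapping class groups of QH pieces), and no argument is given forcing elements centralizing $K$ into a bounded finite group. Worse, the step you yourself flag as the hard one---bounding the image of $\wh M$ in $\Out(K)$ uniformly in $\calh$---cannot be run the way you suggest: there are infinitely many pairwise non-isomorphic McCool groups $K$ of a fixed $G$ (Proposition \ref{infmc}), so no finiteness of isomorphism types of $K$, hence no a priori uniform bound on finite subgroups of $\Out(K)$, is available, and ``vertex finiteness'' by itself does not produce such a bound for $\Out(K)$.

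The paper avoids $\Out(K)$ altogether: from the inclusions $\M(\calc)\subset\M(\calc')\subset\wh M\subset\wh\M(\calc)$ it suffices to bound $[\wh\M(\calc):\M(\calc)]$, a statement about the invariant family $\calc$ alone, and this is done by letting $\wh\M(\calc)$ act on the canonical JSJ tree of $G$ relative to $\calc$ (after a Grushko reduction): the quotient graph has boundedly many edges (Proposition \ref{access}), the group of twists already lies in $\M(\calc)$, and at each vertex the index is bounded by hand---finite subgroups of $GL(p,\Z)$ have bounded order at abelian vertices, Lemma \ref{lem_fini} together with vertex finiteness \cite{GL_vertex} and virtual torsion-freeness of $\Out(G_v)$ handle rigid vertices, and at QH vertices the only extra elements permute boundary components or reverse orientation. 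To repair your proof you would need to supply an argument of this kind for the quotient $\wh M/K$; the centralizer/$\Out(K)$ framework as stated does not provide it.
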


It follows that extended elementary McCool groups satisfy a uniform chain condition as in Theorem \ref{mccc} (see Corollary \ref{eucc}). We also have:

\begin{cor} \label{rless}
Let $G$ be a toral relatively hyperbolic group.   Let $A$ be any subgroup of $\Out(G)$, and let $\calc_A$ be the (possibly infinite) set of  conjugacy classes of $G$ whose $A$-orbit is finite. The image of $A$ in the group of permutations of $\calc_A$ is finite, and its order is bounded by a constant depending only on $G$. In other words, there is a subgroup $A_0\inc A$ of bounded finite index  such that every conjugacy class in $G$  is fixed by $A_0$ or has infinite orbit under $A_0$. 
\end{cor}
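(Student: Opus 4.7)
The plan is to reduce directly to Proposition \ref{mccet} by examining the setwise stabilizer in $\Out(G)$ of a carefully chosen finite subset of $\calc_A$. First I would let $P$ denote the image of $A$ in the group of permutations of $\calc_A$; by construction $P$ acts faithfully on $\calc_A$, and its kernel $A_0\inc A$ is exactly the subgroup fixing each class in $\calc_A$. Every class outside $\calc_A$ has infinite $A$-orbit, and hence infinite $A_0$-orbit as soon as $[A:A_0]$ is finite (since a finite $A_0$-orbit would only break into finitely many $A$-cosets worth of translates). Thus it suffices to bound $|P|$ by a constant $C=C(G)$.

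The key step is as follows. Given any finite collection $\{\pi_1,\dots,\pi_n\}$ of distinct elements of $P$, faithfulness of the action on $\calc_A$ lets me choose finitely many classes in $\calc_A$ that distinguish the $\pi_i$ pairwise, and I take $\calc_0$ to be the union of their $A$-orbits. Because each such orbit is finite by the very definition of $\calc_A$, the set $\calc_0$ is finite and $A$-invariant. The image $P_0$ of $A$ in the permutation group of $\calc_0$ therefore has cardinality at least $n$, and equals $A/(A\cap\M(\calc_0))$, since the kernel of $A\to\mathrm{Perm}(\calc_0)$ consists exactly of those outer automorphisms in $A$ that fix every class of $\calc_0$.

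To close the loop I would introduce $\wh M:=\{\Phi\in\Out(G)\st \Phi\cdot\calc_0=\calc_0\}$, the setwise stabilizer of $\calc_0$ in all of $\Out(G)$. It contains the (elementary) McCool group $\M(\calc_0)$ with finite index, because $\wh M/\M(\calc_0)$ embeds into the finite permutation group of $\calc_0$. Proposition \ref{mccet} then applies and yields $[\wh M:\M(\calc_0)]\le C$ for a constant $C$ depending only on $G$. Since $\calc_0$ is $A$-invariant, $A\inc\wh M$, so $|P_0|=[A:A\cap\M(\calc_0)]\le[\wh M:\M(\calc_0)]\le C$. Hence $n\le C$, and as this bound is uniform in the finite subsets of $P$, one concludes $|P|\le C$.

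No serious obstacle is expected: the delicate point is psychological rather than technical, namely to observe that although $A$ itself need not be (an extension of) a McCool group, its action on any finite $A$-invariant subset $\calc_0\inc\calc_A$ factors through the canonical finite extension $\wh M/\M(\calc_0)$, to which Proposition \ref{mccet} applies uniformly. Once that extension is identified, the argument is pure bookkeeping.
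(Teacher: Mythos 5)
Your proof is correct and follows essentially the same route as the paper: restrict $A$ to a finite $A$-invariant union of orbits inside $\calc_A$, note that $A$ lies in a finite extension of the elementary McCool group $\M(\calc_0)$ (the paper uses $\wh\M(\calc_p)$, you use the setwise stabilizer of $\calc_0$, which is the same mechanism), apply Proposition \ref{mccet} for the uniform bound $C$, and pass to all of $\calc_A$. Your explicit choice of classes distinguishing finitely many permutations just spells out the limiting step that the paper leaves implicit.
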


When $G$ is free, one may take for $A_0$ the intersection of $A$ with a fixed finite index subgroup of $\Out(G)$  (independent of $A$) \cite{HaMo_announcement}.

 One may also consider subgroups of $\Aut(G)$. 
 \begin{dfn}
Let $\calh$ be a family of (conjugacy classes of) subgroups, and $H_0<G$ another subgroup.
Let   $ 
\AM(\calh, H_0)\inc\Aut(G)$ be the group of automorphisms acting trivially on $\calh$ (in the sense of Definition \ref{gmc}) and fixing the elements of $H_0$.
 \end{dfn}

\begin{prop} \label{mcaut} 
If $G$ is a non-abelian toral relatively hyperbolic group, 
then  $ 
 \AM(\calh, H_0)$ is an extension $$1\to K\to
 \AM(\calh, H_0)\to \M(\calh')\to 1$$ where $\M(\calh')\inc\Out(G)$ is a McCool group, and   $K $ is the centralizer of $H_0$ (isomorphic to $G$ or to $\Z^n$ for some $n\ge0$).
\end{prop}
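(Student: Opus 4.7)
The plan is to set $\calh' := \calh \cup \{H_0\}$ and to analyze the restriction of the natural projection $\pi\colon\Aut(G)\to\Out(G)$ to $\AM(\calh,H_0)$. The goal is to show that the image of this restriction equals $\M(\calh')$ (hence is a McCool group by Corollary~\ref{genmc}), and that the kernel is isomorphic to the centralizer $Z_G(H_0)$; this will produce the desired short exact sequence.

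The containment $\pi(\AM(\calh,H_0))\subset\M(\calh')$ holds essentially by definition: any $\alpha\in\AM(\calh,H_0)$ acts trivially on each $H_i\in\calh$ by hypothesis, and fixes $H_0$ pointwise, so acts on $H_0$ as conjugation by $1$. For surjectivity, given $\Phi\in\M(\calh')$ I would pick a representative $\alpha_0\in\Aut(G)$ acting on $H_0$ as conjugation by some $g\in G$ and replace it with $\alpha:=\mathrm{ad}_{g^{-1}}\circ\alpha_0$, which fixes $H_0$ pointwise. Since composing with an inner automorphism preserves the property of acting trivially on $\calh$ (in the sense of Definition~\ref{gmc}), one gets $\alpha\in\AM(\calh,H_0)$ with $\pi(\alpha)=\Phi$.

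For the kernel, any inner automorphism $\mathrm{ad}_g$ acts on every subgroup of $G$ as conjugation by $g$, so the only condition for $\mathrm{ad}_g$ to lie in $\AM(\calh,H_0)$ is that it fix $H_0$ pointwise, i.e.\ $g\in Z_G(H_0)$. Because $G$ is non-abelian toral relatively hyperbolic, its center is trivial (any non-trivial central element would lie in every maximal abelian subgroup, contradicting malnormality), so $g\mapsto\mathrm{ad}_g$ is injective and $K\cong Z_G(H_0)$. To describe this centralizer: if $H_0=1$ then $Z_G(H_0)=G$; otherwise $H_0$ contains a non-trivial element $h$ and $Z_G(H_0)\subset Z_G(h)$, and in a toral relatively hyperbolic group $Z_G(h)$ is the unique maximal abelian subgroup containing $h$, which is cyclic or a parabolic, hence $\Z^k$ for some $k\ge 1$. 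Thus $Z_G(H_0)$ is a subgroup of $\Z^k$, so is itself $\Z^n$ for some $n\ge 0$.

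The main subtlety I foresee is purely formal: since $H_0$ need not be finitely generated, the family $\calh'$ may fail to satisfy the hypotheses of Definition~\ref{gmc} directly. This is dispatched by Corollary~\ref{genmc}, which guarantees that $\M(\calh')=\M(\calc)$ for some finite set of conjugacy classes $\calc$ and so is a genuine McCool group. Beyond this, the argument is a routine diagram chase once the two structural facts about toral relatively hyperbolic groups are in hand, namely triviality of the center of $G$ and the description of centralizers of non-trivial elements as free abelian groups of finite rank.
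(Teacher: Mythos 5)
Your proposal is correct and follows essentially the same route as the paper: set $\calh'=\calh\cup\{H_0\}$, project $\AM(\calh,H_0)$ to $\Out(G)$, identify the image with $\M(\calh')$ (a McCool group via Corollary \ref{genmc}) and the kernel with the inner automorphisms fixing $H_0$ pointwise, which by triviality of the center is isomorphic to $Z_G(H_0)$. The extra details you supply (the surjectivity adjustment by $\mathrm{ad}_{g^{-1}}$, the CSA description of the centralizer, and the remark that Corollary \ref{genmc} handles possibly infinitely generated $\calh'$) are exactly the points the paper leaves implicit.
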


\begin{cor} \label{mca}
Theorems \ref{mc} and   \ref{mccc} 
also hold  in $\Aut(G)$: groups of the form $ 
 \AM(\calh, H_0)$ are of type VF 
and satisfy a uniform chain condition.
\end{cor}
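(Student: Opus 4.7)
The plan is to use Proposition \ref{mcaut} to realize $\AM(\calh, H_0)$ as an extension
\[
1 \to K \to \AM(\calh, H_0) \to \M(\calh') \to 1,
\]
with $\M(\calh')$ a McCool group in $\Out(G)$ and $K\cong G$ or $K\cong \Z^n$. Both assertions of the corollary then reduce to the corresponding statements for $\M(\calh')$, namely Theorems \ref{mc} and \ref{mccc}, together with some control on the kernel. The case where $G$ is abelian reduces to a direct analysis in $GL(n,\Z)$ and is handled separately.

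For type VF, I would first apply Theorem \ref{mc} to obtain a finite-index subgroup $Q_0\inc\M(\calh')$ with a finite classifying space $BQ_0$. The kernel $K$ is itself of type F: toral relatively hyperbolic groups admit finite classifying spaces, and so does $\Z^n$. Pulling $Q_0$ back to a finite-index subgroup $E_0\inc\AM(\calh, H_0)$, the restricted extension $1\to K\to E_0\to Q_0\to 1$ allows one to build $BE_0$ as a $BK$-bundle over $BQ_0$, which is a finite CW complex. Hence $\AM(\calh, H_0)$ is of type VF.

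For the uniform chain condition, I would take a strictly descending chain $\AM(\calh_0, H_{0,0})\incd\cdots\incd\AM(\calh_p, H_{0,p})$ and project it to $\Out(G)$, yielding a weakly descending chain of McCool groups $\M(\calh'_0)\supseteq\cdots\supseteq\M(\calh'_p)$. Partition the indices into maximal blocks of constant projection; by Theorem \ref{mccc} there are at most $C(G)+1$ such blocks. Within a single block, the $\AM$'s strictly decrease while projecting to the same McCool group, so the kernels $Z_G(H_{0,j})$ must strictly decrease. The key observation is that in a non-abelian toral relatively hyperbolic group, $Z_G(H)$ is either $G$ (when $H=1$), the unique maximal abelian subgroup containing $H$ (when $H$ is non-trivial abelian), or trivial (when $H$ is non-abelian, since two non-commuting elements of $H$ lie in distinct malnormal maximal abelian subgroups). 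Since maximal abelian subgroups cannot properly contain one another, a strictly descending chain of such centralizers has at most three terms, so each block contributes at most three indices, giving $p+1 \le 3(C(G)+1)$.

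The main obstacle is simply the bookkeeping: tracking how a strictly descending chain in $\Aut(G)$ relates to its weakly descending projection in $\Out(G)$, and absorbing the slack through the structure of centralizers. The centralizer rigidity is a standard consequence of malnormality of maximal abelian subgroups in toral relatively hyperbolic groups, so no genuinely new difficulty arises beyond the two cited theorems.
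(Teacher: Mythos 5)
Your proposal is correct and follows essentially the same route as the paper: it uses the extension of Proposition \ref{mcaut} with kernel of type F (and the quotient McCool group of type VF by Theorem \ref{mc}) to get type VF, and obtains the chain condition by projecting a descending chain to $\Out(G)$, applying Theorem \ref{mccc}, and noting that within blocks of constant image the kernels are centralizers, which are trivial, a maximal abelian subgroup, or $G$, so such chains have bounded length. The only cosmetic differences are your explicit $BK$-bundle construction in place of Corollary \ref{cor_extensionVF} and your explicit numerical bound $3(C(G)+1)$.
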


  Theorems \ref{mc}  and \ref{mct} are proved in Section \ref{clas},  and  Theorem \ref{mccc}  is proved   in Section \ref{pfcc}.  All other results are proved in Section  \ref{pfcor}.

\paragraph{Acknowledgements}  
The first author acknowledges support from ANR-11-BS01-013, the Institut Universitaire de France, and from the Lebesgue center of mathematics. 
The second author acknowledges support from  ANR-10-BLAN-116-03.

\section{Preliminaries}

In this paper, $G$ will always denote a toral  
relatively hyperbolic group. 
Any non-trivial abelian subgroup $A$ of $G$ is contained in  a unique maximal abelian subgroup.
 The maximal abelian subgroups   are malnormal ($G$ is CSA), finitely generated, and there are finitely many non-cyclic ones up to conjugacy. Two subgroups of $A$ which are conjugate in $G$ are equal.

  The center of a group $H$ will be denoted by $Z(H)$.  We write $N_K(H)$ for the normalizer of a group $H$ in a group $K$, with $N(H)=N_G(H)$. Centralizers are denoted by $Z_K(H)$.
  
We say that $\Phi\in\Out(G)$ preserves a subgroup $H$, or leaves $H$ invariant, if its representatives $\varphi\in\Aut(G)$ map $H$ to a conjugate. If $\varphi\in\Aut(G)$ equals the identity on $H$, we say that it fixes $H$.

\begin{dfn} \label{pres}
 If $\calh$ is a family of subgroups, we let $ \Out(G;\calh)\inc\Out(G)$ be the group of automorphisms preserving each $H\in \calh$, and $\widehat\Out(G;\calh)$   the group of automorphisms preserving $\calh$ globally (possibly permuting  groups in $\calh$).

We denote by  $$\Out(G;\mk\calh)=\M(\calh)\inc \Out(G)$$ the group of automorphisms acting trivially on groups in $\calh$ (as in Definition \ref{gmc}). 

 We write $$\Out(G;\mk\calh,\calk):=\Out(G;\mk\calh )\cap\Out(G; \calk),$$ and $$\Out(G;\calh,\calk):=\Out(G;\calh \cup\calk). $$
\end{dfn}

\begin{rem*} 
 $\Out(G;\mk\calh)$ and $\M(\calh)$ denote the same group. The notation $\Out(G;\mk\calh)$ is more flexible and will be   convenient in Section \ref{clas}.

We will often view a set of conjugacy classes $\calc=\{[c_i]\}$ as a family of cyclic subgroups $\calh=\{\langle c_i\rangle\}$, 
since $\M(\calc)=\M(\calh)$.  Note 
that $\Out(G;\calh)$ is larger than $\M(\calc)=\M(\calh)$
 since $c_i$ may sent
to a conjugate of $c_i\m$. 
\end{rem*}

 For example, suppose that  $H<G=\bbZ^n$ is the subgroup generated by the first $k$ basis elements, and $\calh=\{H\}$. Then $\Out(G)=GL(n,\Z)$; the group  $\Out(G;\calh)$ consists of  block triangular matrices, 
 and $\Out(G;\mk\calh)=\M(\calh)$ is the group of matrices fixing the first $k$ basis vectors.

There are   inclusions $\Out(G;\mk\calh)\subset \Out(G;\calh)\subset \wh\Out(G;\calh)$.
 Note that $\Out(G;\mk\calh)$ has finite index in $ \Out(G;\calh)$ and $\widehat\Out(G;\calh)$ if $\calh$ is a finite family of cyclic groups.

Given a family $\calh$ and a subgroup $J$, we denote by $\calh_{ | J}$ the $J$-conjugacy classes of subgroups of $J$ conjugate to a group of    $\calh$.  We view $\calh_{ | J}$ as   a family of subgroups of $J$, each defined up to conjugacy in $J$.  In the next subsection 
we will define a closely related notion $\calh_{||J}$ when $J=G_v$ is a vertex stabilizer in a tree.

If $\calc$ is a set of conjugacy classes $[c_i]$,   viewed as a set of cyclic subgroups, 
   $\calc_{ | J}$ is the set of   $J$-conjugacy classes of elements of $J$ representing elements in $\calc$.

  Now suppose that
  subgroups of $J$ which are conjugate in $G$ are conjugate in $J$; 
  this   holds for instance if $J$ is malnormal (in particular if $J$ is a free factor), and also if $J$ is   abelian. 
 In this case
 we may view $\calh_{ | J}  $ as a subset of $\calh$; it is finite if $\calh$ is.

\subsection{Trees and splittings} \label{tre}
A tree will be a simplicial tree $T$ with an action of $G$ without inversions. 
 A tree $T$ is \emph{relative to $\calh$} (resp.\ to $\calc$) if any group  in $\calh$ (resp.\ any element  representing a class in $\calc$) fixes a point in $T$.

Two trees are considered to be the same if there is a $G$-equivariant isomorphism between them. In this paper, all trees will have abelian edge stabilizers.

Unless mentioned otherwise, we
 assume  that the action is \emph{minimal} (there is no proper invariant subtree). We usually assume    that there is \emph{no redundant vertex} (if $T\setminus \{x\}$ has two components, some $g\in G$ interchanges them). If a finitely generated subgroup $H\inc G$ acts on $T$ with no global fixed point, there is a smallest $H$-invariant subtree called the \emph{minimal subtree} of $H$.

 The tree $T$ is \emph{trivial} if there is a global fixed point (minimality then implies that $T$ is a point).  
An element of $G$, or a subgroup, is \emph{elliptic} if it fixes a point in $T$. Conjugates of elliptic subgroups are elliptic, so we also consider elliptic conjugacy classes.

An action of $G$ on  a tree $T$ gives rise to a splitting of $G$, i.e.\ a decomposition of $G$ as the fundamental group of  the quotient graph of groups $\Gamma=T/G$. Conversely, $T$ is the Bass-Serre tree of $\Gamma$. 
All definitions given here apply to both splittings and trees.   In particular, a splitting is relative to $\calh$ if every $H\in\calh$ has a conjugate contained in  a vertex group.

 Minimality implies that the graph  $\Gamma$ is finite. There is a one-to-one correspondence between vertices (resp.\ edges) of $\Gamma$ and $G$-orbits of vertices (resp.\ edges) of $T$. 
We denote by $V$ the set of vertices of $\Gamma$, and by $G_v$ the group carried by  a vertex $v\in V$. We also view $v$ as a vertex of $T$ with stabilizer $G_v$. Similarly, we denote by $e$ an edge of $\Gamma$ or $T$,   by $G_e$ the corresponding group (always abelian in this paper), and by $E$ the set of non-oriented edges of $\Gamma$. 

Edge groups   being abelian, hence relatively quasiconvex, every vertex group $G_v$ is toral relatively hyperbolic (see for instance \cite{GL6}). 

The edge groups carried by edges of $\Gamma$ incident to a given vertex $v$ will  be called the \emph{incident edge groups}  of $G_v$. We denote by $\Inc_v$ the family of incident edge groups (we view it as a finite family of subgroups of $G_v$, each well-defined up to conjugacy).

 If $\calh$ is a finite family of subgroups of $G$, and $v$ is a vertex stabilizer of $T$, we denote by   $\calh_{||G_v}$ the family of subgroups $H\inc G_v$ which are conjugate to a group of $\calh$ and fix no other point in $T$. Two such groups are conjugate in $G_v$ if they are conjugate in $G$ (\cite{GL6}, Lemma 2.2 where the notation $\calh_{|G_v}$ is used instead), 
so we may also view $\calh_{||G_v}$ as a subset of $\calh$ (it contains some of the  groups of $\calh$ having a conjugate in $G_v$), or as a finite family of subgroups of $G_v$, 
each well-defined up to conjugacy 
($\calh_{||G_v}$ may be smaller than   $\calh_{ | G_v}$ because we do  not include subgroups of edge groups).
 
Any splitting of $G_v$ relative to $\Inc_v $ extends to a splitting of $G$.  If   $T$ is relative to $\calh$, any splitting of $G_v$ relative to $\Inc_v\cup \calh_{||G_v}$  is relative to  $\calh_{|G_v}$ and extends to a splitting of $G$ relative to $\calh$.

 If  $\calc$ is a   set of conjugacy classes, we view 
 $\calc_{  | | G_v}$ 
as the subset of $\calc$ consisting of classes having a representative that fixes $v$ and no other vertex. 
In particular, $\calc_{  | | G_v}$ is finite if $\calc$ is.

A tree $T'$ is a \emph{collapse} of $T$ if it is obtained from $T$ by collapsing 
each edge in a certain $G$-invariant
collection 
to a point; conversely, we say that $T$ \emph{refines} $T'$. 
In terms of graphs of groups, one passes from $\Gamma=T/G$ to $\Gamma'=T'/G$ by collapsing edges;
  for each vertex $v'\in\Gamma'$, 
the vertex group $G_{v'}$ is the fundamental group of the graph of groups $\Gamma_{v'}$ occuring as the preimage of $v'$ in $\Gamma$.

All maps between trees will be $G$-equivariant. 
 Given two trees $T$ and $T'$, we say that $T$ \emph{dominates} $T'$ if there is a 
 map $f:T\to T'$, or equivalently if every subgroup which is elliptic in $T$ is also elliptic in $T'$; in particular, $T$ dominates any collapse $T'$.  We sometimes say that $f$ is a \emph{domination map.} 
Minimality implies that it is onto.

Two trees  belong to the same \emph{deformation space} if they dominate each other. In other words, a deformation space $\cald$ is   the set of all trees having a given family of subgroups as their elliptic subgroups. We say that $\cald$ dominates $\cald'$ if trees in $\cald$ dominate those in $\cald'$.

\subsection{JSJ decompositions \cite{GL3a,GL3b}}\label{jsj}

Let $\calh$ be a family of subgroups of $G$.  
  Recall that a tree $T$ is \emph{relative} to $\calh$ 
  if all groups of $\calh$ 
  are elliptic in $T$. 

We denote   
by $\hp$ the family obtained by adding to $\calh$ all non-cyclic abelian subgroups of $G$.

The group $G$ is \emph{freely indecomposable} relative to $\calh$ if it does not split over the trivial group relative to $\calh$; equivalently, 
$G$ cannot be written non-trivially as $A*B$ with every group of $\calh$ contained in a conjugate of $A$ or $B$  (if $\calh$ is trivial, we also require $G\ne\Z$, as  we consider  $\Z$ as freely decomposable). 
Non-cyclic abelian groups being one-ended, being {freely indecomposable} relative to $\calh$
is the same as relative to $\hp$.

Let $\cala$ be another family of subgroups (in this paper, $\cala$ 
consists of the trivial group or is the family of all abelian subgroups). 
Once $\calh $ and $\cala$ are fixed, we only consider trees relative to $\calh$, with edge stabilizers in $\cala$. We also assume that trees are minimal.

 A tree $T$ (with edge stabilizers in $\cala$, relative to $\calh$) is \emph{universally elliptic}  (with respect to $\calh$) if its edge stabilizers are elliptic in every tree. It is a \emph{JSJ tree} if,  moreover,  it dominates every universally elliptic tree.
   The set of JSJ trees is   called the \emph{JSJ deformation space} (over $\cala$ relative to $\calh$). All JSJ trees have the same vertex stabilizers, provided one restricts to stabilizers not in $\cala$.
  
  When $\cala$ consists of the trivial group, the JSJ deformation space is called  the \emph{Grushko deformation space} (relative to $\calh$). The group $G$ has a relative Grushko decomposition $G=G_1*\dots*G_n*F_p$, with $F_p$ free,  every $H\in\calh$ contained in some $G_i$ (up to conjugacy), and $G_i$ freely indecomposable relative to   $\calh_{ | G_i}$.  
    Vertex stabilizers of the relative Grushko deformation space $\cald$ are precisely conjugates of the $G_i$'s. 
    The   deformation space 
 is trivial (it only contains the trivial tree) if and only if $G$ is freely indecomposable  relative to $\calh$. 
  Writing $\calg=\{G_1,\dots, G_n\}$, note that $\Out(G;\calh\cup\calg)$ has finite index in $\Out(G;\calh )$, because automorphisms in $\Out(G;\calh )$ leave $\cald$ invariant and therefore permute the $G_i$'s (up to conjugacy).

Now suppose that $\cala$ consists of all abelian subgroups, and $G$ is freely indecomposable relative to a family $\calh$. Then \cite[11.1]{GL3b} the 
  JSJ deformation space relative to $\hp$ 
  contains a preferred tree $\Tcan$; this tree  is  invariant under  $\widehat\Out(G;\calh)$ (the group of automorphisms preserving $\calh$). 
  
  It is obtained as a \emph{tree of cylinders}. We   describe this construction in the case that will be needed here (see Proposition 6.3 of \cite{GL4} for details). 
  Let $T$ be any tree with non-trivial abelian edge stabilizers, relative to all non-cyclic abelian subgroups. Say that two edges $e,e'$ belong to the same cylinder if their stabilizers commute. Cylinders are subtrees intersecting in at most one point. 
  
  The tree of cylinders $T_c$ is defined as follows. It is bipartite, with vertex set $\V_0\cup \V_1$. Vertices in $\V_0$ are vertices of $T$ belonging to at least two cylinders. Vertices in $\V_1$ are cylinders of $T$.  A vertex $v\in \V_0$ is joined to a vertex 
$ Y\in \V_1$ if $v$ (viewed as a vertex of $T$) belongs to $Y$ (viewed as a subtree of $T$). Equivalently, one obtains $T_c$ from $T$ by replacing each cylinder $Y$ by the cone on its boundary (points of $Y$ belonging to at least one other cylinder).

The tree $T_c$    only depends on the 
deformation space $\cald$ containing  $T$, and it belongs to $\cald$. Like $T$, it has non-trivial abelian edge stabilizers, and is relative to all non-cyclic abelian subgroups. 
It is minimal if $T$ is minimal, but vertices in $\V_1$ may be redundant vertices.

The stabilizer of a vertex   $v_1\in \V_1 $ is a maximal abelian subgroup.  
 The stabilizer  of a vertex in $\V_0$ is non-abelian and is the stabilizer  of a vertex of $T$. The  stabilizer of an edge $v_0v_1$ with $v_i\in \V _i$ is  an infinite abelian subgroup, it is a maximal abelian subgroup of $G_{v_0}$ (but it is not always maximal  abelian in  $G_{v_1}$).

 The $\widehat\Out(G;\calh)$-invariant tree $\Tcan$ mentioned above is the tree of cylinders of JSJ trees relative to $\hp$.  It is a JSJ tree, and 
the tree of cylinders of $\Tcan$ is $\Tcan$   itself.

 Let $\Gcan=\Tcan/G$ be the quotient graph of groups, and let $v\in \V_0/G$ be a vertex with $G_v$ non-abelian. 
 If $G_v$ does not split over an abelian group relative to incident edge groups and 
to  $\calh_{ | |  G_v}$, it is universally elliptic (with respect to both $\calh$ and $\hp$); we say that $G_v$ (or $v$) is \emph{rigid}. Otherwise it is \emph{flexible}. 
 
 A key fact here is that every flexible vertex $v$ of $\Gcan$ is \emph{quadratically hanging (QH)}. 
The group  $G_v$ is the fundamental group of a compact (possibly non-orientable) surface $\Sigma$, and incident edge groups are boundary subgroups of $\pi_1(\Sigma)$ (i.e.\ fundamental groups of boundary components of $\Sigma$); in particular, incident edge groups are cyclic. At most one incident edge group is attached to a given boundary component (groups carried by distinct incident edges are non-conjugate in $G_v$).  If $H$ is conjugate to a group of $\calh$, then $H\cap G_v$ is contained in a boundary subgroup. Conversely,  every boundary subgroup is an incident edge group or has a finite index subgroup which is conjugate to a group of $\calh$.  

As in \cite{Szepietowski_presentation}, 
we   denote   by $\calp\calm^+(\Sigma)$ the group of isotopy classes of homeomorphisms of   $\Sigma$ mapping each boundary component to itself in an orientation-preserving way.  We view $\calp\calm^+(\Sigma)$ as a subgroup of $\Out(\pi_1(\Sigma))=\Out(G_v)$, indeed $\calp\calm^+(\Sigma)=\Out(G_v;\mk\Inc_v ,\mk\calh_{||G_v} )$.

\subsection{Automorphisms of trees}\label{automs}

There is a natural action of 
$\Out(G)$ on the set of trees, given by precomposing the action on $T$ with an automorphism of $G$. We denote by $\Out(T)$ the stabilizer of a tree $T$.  We write $\Out(T,\calh)$ for $\Out(T)\cap\Out(G;\calh)$, and so on.

If $T$ is a point, $\Out(T)=\Out(G)$. If $G$ is abelian and $T$ is not a point, then $T$ is a line on which $G$ acts by 
 integral translations, and $\Out(T)$ is the group of automorphisms  of $G$ preserving the kernel of the action. 

We now study $\Out(T)$ in the general case, following \cite{Lev_automorphisms}. 

We  always assume that edge stabilizers are abelian.  
This implies that all vertex or edge stabilizers  $H$    have the property that the normalizer $N(H)$ acts on $H$ by inner automorphisms: indeed, $N(H)$ is abelian if $H$ is abelian,  equal to $H$   if $H$ is not abelian.

One first considers the action of $\Out(T)$ on the finite graph $\Gamma=T/G$. We always denote by $\Out^0(T)$ the finite index subgroup consisting of automorphisms acting trivially.

We study it through the natural map  $$\rho=\prod_{v\in V}\rho_v:\Out^0(T)\to\prod_{v\in V}\Out(G_v)$$ recording the action of automorphisms on vertex groups (see Section 2 of \cite{Lev_automorphisms}); recall that $V$ is the vertex set of $\Gamma$. 
Since $N(G_v)$ acts on $G_v$ by inner automorphisms,  
  $\rho_v(\Phi)$ is simply defined as the class of $\alpha_{ | G_v}$, where $\alpha\in\Aut(G)$ is any representative of  $\Phi\in\Out^0(T)$   leaving $G_v$ invariant.

The image of $\rho$ is contained in $\prod_{v\in V} \Out(G_v;\Inc_v)$  (the family of incident edge groups  at a given $v$ is preserved). It  contains the subgroup $\prod_{v\in V}\Out(G_v;\mk\Inc_v)$ 
because automorphisms of $G_v$ acting trivially on incident edge groups extend ``by the identity'' to automorphisms of $G$ preserving $T$.  

The kernel of $\rho$ is the \emph{group of twists}  $\calt$, a finitely generated abelian group  when no edge group  is trivial
(bitwists as defined in  \cite{Lev_automorphisms} belong to $\calt$ because the normalizer of an abelian subgroup is its centralizer). We therefore have an exact sequence
$$1\to\calt\to \Out^0(T)\xra{\ \rho\ }\prod_{v\in V} \Out(G_v;\Inc_v).$$

Now suppose that $T$ is relative to   families $\calh$ and $\calk$  (\ie   each $H_i$, $K_j$ fixes a point in $T$). 
A trivial but important remark is that $\calt\inc\Out(G ;\mk \calh, \mk\calk )$. As pointed out in Lemma 2.10 of \cite{GL6}, we have
$$\prod_{v\in V}\Out(G_v;\mk\Inc_v ,\mk\calh_{||G_v},  \calk_{||G_v})  \inc \rho\biggl (\Out^0(T)\cap\Out(G;\mk\calh, \calk)\biggr)
\inc \prod_{v\in V} \Out(G_v;\Inc_v,\mk\calh_{||G_v}, \calk_{||G_v})
$$
(see Subsection \ref{tre} for the definition of $\calh_{||G_v}$; groups of $\calh_{||G_v}$ that are conjugate
in $G$ are necessarily conjugate in $G_v$).

 The fact noted above that the image of $\Out^0(T)$ by $\rho$ contains $\prod_{v\in V} \Out(G_v;\mk\Inc_v)$ expresses that  \emph{automorphisms $\Phi_v\in\Out(G_v)$ acting trivially on incident edge groups may be combined into a global $\Phi\in\Out(G)$}. In Subsection \ref{finpf} we will need a more general result, where we only assume that the $\Phi_v$'s have compatible actions on edge groups.
 
 Given  an edge $e$ of $\Gamma$,
  there is a natural map $\rho_e:\Out^0(T)\to \Out(G_e)$,  defined in the same way as $\rho_v$ above. If $v$ is an endpoint of $e$, the inclusion of $G_e$ into $G_v$ induces a homomorphism ${\rho_{v,e}}:\Out(G_v;\Inc_v)\to\Out(G_e)$ with $\rho_e={\rho_{v,e}}\circ\rho_v$
(it is well-defined because the  normalizer $N_{G_v}(G_e)$ acts on $G_e$   by inner automorphisms).

\begin{lem} \label{modif}
 Consider a family of automorphisms $\Phi_v\in\Out(G_v;\Inc_v)$  such that, if  $e=vw$ is any edge of $\Gamma$, then $\rho_{v,e}(\Phi_v)=\rho_{w,e}(\Phi_w)$. There exists $\Phi\in\Out^0(T)$ such that $\rho_v(\Phi)=\Phi_v$ for every $v$.
\end{lem}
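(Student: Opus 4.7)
My plan is to construct an explicit automorphism $\alpha \in \Aut(G)$ satisfying $\rho_v([\alpha]) = \Phi_v$ and $[\alpha] \in \Out^0(T)$, using the graph-of-groups structure of $\Gamma = T/G$. Fix a maximal subtree $T_0 \subset \Gamma$ and a fundamental domain for the $G$-action on $T$, consisting of lifts $\tilde v \in T$ of the vertices $v \in V$ that meet along lifts of $T_0$-edges; for each edge $e \notin T_0$, pick a stable letter $t_e \in G$. Then $G$ is generated by the vertex groups $G_v = G_{\tilde v}$ together with the $t_e$'s, subject to the amalgamation relations $i_{e,v}(x) = i_{e,w}(x)$ for $e \in T_0$ and the HNN relations $t_e\, i_{e,v}(x)\, t_e^{-1} = i_{e,w}(x)$ for $e \notin T_0$, with $x \in G_e$.

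Pick arbitrary representatives $\alpha_v \in \Aut(G_v)$ of $\Phi_v$. The hypothesis $\rho_{v,e}(\Phi_v) = \rho_{w,e}(\Phi_w)$ in $\Out(G_e) = \Aut(G_e)$ (the equality holds because edge groups are abelian) furnishes, for each edge $e = vw$, a common automorphism $\beta_e \in \Aut(G_e)$ induced by $\alpha_v$ and $\alpha_w$ after appropriate conjugation. Working inductively along $T_0$ from a root, I adjust each newly-reached $\alpha_w$ by an inner automorphism of $G_w$, without changing its outer class $\Phi_w$, so that the entering $T_0$-edge group $G_e$ is preserved by $\alpha_w$ with $\alpha_w|_{G_e} = \alpha_v|_{G_e} = \beta_e$. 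This adjustment is available because $N_{G_w}(G_e)$ acts on $G_e$ by inner automorphisms, which are trivial since $G_e$ is abelian; the $\Out$-level match then forces the two restrictions to coincide as automorphisms of $G_e$.

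Now define $\alpha$ on generators: set $\alpha|_{G_v} = \alpha_v$ (after the adjustments above), and for each HNN edge $e = vw \notin T_0$, set
$$\alpha(t_e) = g_e^w\, t_e\, (g_e^v)^{-1},$$
where $g_e^v \in G_v$ and $g_e^w \in G_w$ are chosen so that $\alpha_v(G_e) = g_e^v G_e (g_e^v)^{-1}$, $\alpha_w(G_e) = g_e^w G_e (g_e^w)^{-1}$, and the induced automorphisms of $G_e$ both equal $\beta_e$. A direct computation using the HNN relation applied to $\beta_e(x)$ verifies that $\alpha$ respects each HNN relation; the amalgamation relations for $T_0$-edges hold because $\alpha_v|_{G_e} = \alpha_w|_{G_e}$. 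Thus $\alpha$ extends to an endomorphism of $G$; constructing an analogous endomorphism from $\Phi_v^{-1}$ yields an inverse, so $\alpha \in \Aut(G)$. By construction $\alpha$ preserves $T$ and acts trivially on $\Gamma$, giving $[\alpha] \in \Out^0(T)$, and $\rho_v([\alpha]) = [\alpha_v] = \Phi_v$.

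The main obstacle is handling all incident edge groups at each vertex simultaneously: the inductive adjustment along $T_0$ resolves amalgamation edges, while the HNN edges are absorbed into the twist term $g_e^w t_e (g_e^v)^{-1}$, reflecting the twist group $\calt$ appearing as the kernel of $\rho$.
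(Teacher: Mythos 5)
The paper leaves this lemma to the reader, so there is no written proof to compare with; but your construction has a genuine gap at the amalgamation step. You define $\alpha$ on generators by $\alpha|_{G_v}=\alpha_v$ and handle each tree edge $e=vw$ by adjusting only the newly reached $\alpha_w$ so that ``$\alpha_w|_{G_e}=\alpha_v|_{G_e}$''. This presupposes that the parent representative $\alpha_v$ preserves the subgroup $i_{e,v}(G_e)$ on the nose (and indeed the amalgamation relation forces this, since $G_v\cap G_w=G_e$ in the tree). But $\alpha_v$ was only adjusted to preserve the edge group of the edge coming from \emph{its own} parent, and in general no single representative of $\Phi_v$ preserves all incident edge groups simultaneously: take $G_v=F_3=\langle a,b,c\rangle$ with incident edge groups $\langle a\rangle$ and $\langle b\rangle$ and $\Phi_v$ the class of the partial conjugation $a\mapsto a$, $b\mapsto cbc^{-1}$, $c\mapsto c$. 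Any representative preserving $\langle a\rangle$ is $\mathrm{ad}(a^k)\circ\phi$, which sends $\langle b\rangle$ to $a^kc\langle b\rangle c^{-1}a^{-k}\neq\langle b\rangle$. This situation is compatible with the hypotheses of the lemma (the induced automorphisms on the edge groups are trivial), so your scheme genuinely breaks at any vertex of valence at least two in $T_0$, not merely in its write-up; the inner adjustments of $G_w$ at the children cannot repair it, because the discrepancy lives in $G_v$.

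The lemma is nevertheless true, and the fix is to drop the requirement $\alpha|_{G_v}=\alpha_v$: set $\alpha|_{G_v}=\mathrm{ad}(h_v)\circ\alpha_v$ with correction elements $h_v\in G$ (not in $G_v$) chosen inductively along the rooted tree $T_0$. If $w$ is a child of $v$ via $e$, pick $a\in G_v$, $b\in G_w$ with $\alpha_v\circ i_{e,v}=\mathrm{ad}(a)\circ i_{e,v}\circ\beta_e$ and $\alpha_w\circ i_{e,w}=\mathrm{ad}(b)\circ i_{e,w}\circ\beta_e$ (possible with $\beta_e$ exactly, since $G_e$ is abelian so restrictions are well defined in $\Aut(G_e)$, as you observe), and set $h_w=h_vab^{-1}$; then the two definitions agree on $G_e$, and the stable letters are sent to $h_w b' t_e (a')^{-1}h_v^{-1}$ with conjugators $a',b'$ at the two ends. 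This still gives $\rho_v([\alpha])=\Phi_v$, since $\mathrm{ad}(h_v)^{-1}\circ\alpha$ preserves $G_v$ and restricts there to $\alpha_v$. (Equivalently, one can induct on the edges of $\Gamma$ via one-edge amalgam and HNN cases, or build an $\alpha$-equivariant map of $T$.) Your treatment of the HNN edges and of invertibility is fine in spirit, since there the conjugators are absorbed into $\alpha(t_e)$ and no simultaneity is needed; only the tree edges require the corrections to live in $G$ rather than in the vertex groups.
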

 
 We leave the proof to the reader.
  The lemma applies 
  to any 
   graph of groups such that, for every vertex or edge group $H$, the  normalizer $N(H)$ acts on $H$   by inner automorphisms. 
$\Phi$ is not unique, it may be composed with any element of $\calt$.

In Subsection \ref{finpf} we will have a family of automorphisms $\Phi_e\in\Out(G_e)$, and we will want  $\Phi\in\Out^0(T)$ such that $\rho_e(\Phi)=\Phi_e$ for every $e$. By the lemma, it suffices to find automorphisms $\Phi_v\in\Out(G_v;\Inc_v)$ inducing the $\Phi_e$'s.
 
\subsection{Rigid vertices}
  
  We now specialize to the case when 
  $T=\Tcan$ is the canonical JSJ decomposition relative to $\hp$ discussed in 
     Subsection \ref{jsj}.
 
 If $v$ is a QH vertex, the image of $\Out^0(T)\cap\Out(G;\mk\calh)$ in $\Out(G_v)$ contains 
 $\calp\calm^+(\Sigma)=\Out(G_v;\mk\Inc_v ,\mk\calh_{||G_v} )$ with finite index (see \cite{GL6},   Proposition 4.7).

If $v$ is a rigid vertex, then $G_v$ does not split over an abelian group relative to $\Inc_v\cup\calh_{||G_v}$.
By the Bestvina-Paulin method and Rips theory, one deduces that the image of $\Out^0(T)\cap\Out(G;\mk\calh)$ in $\Out(G_v)$ is finite if $\calh$ is a finite family of finitely generated subgroups (see \cite{GL6}, Theorem 3.9 and Proposition 4.7).

\begin{lem}
\label{lem_fini}
   Let $\calh,\calk$ be   finite families of 
  finitely generated subgroups, with each group in $\calk$ abelian.
 Assume that $G$ is one-ended relative to $\calh\cup\calk$,
 and let $\Tcan$ be the canonical JSJ tree relative to $(\calh\cup\calk)^{+ab}$.
 
The image of  $$\Out^0(T)\cap\Out(G;\mk\calh,\calk)$$ by  $\rho_v:\Out^0(T)\to\Out(G_v)$ is finite if $v$ is a rigid vertex of $\Tcan$.    Its image 
 by $\rho_e:\Out^0(T)\to\Out(G_e)$ is finite if $e$ is any edge.
\end{lem}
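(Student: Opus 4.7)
The plan is to derive both statements from the Bestvina--Paulin/Rips argument already cited from \cite{GL6} (Theorem~3.9 and Proposition~4.7) in the paragraph preceding the lemma, slightly adapted to accommodate the preservation (not trivial action) hypothesis on the abelian family $\calk$.

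\emph{Reduction to the rigid vertex statement.} In the bipartite tree of cylinders $\Tcan$, every edge $e$ has exactly one endpoint $v_0$ in $\V_0$, whose vertex group is non-abelian, hence rigid or QH. Since $\rho_e=\rho_{v_0,e}\circ\rho_{v_0}$, it is enough to bound the image of $\rho_{v_0}$. If $v_0$ is rigid this is the first claim; if $v_0$ is QH then $G_e$ is a boundary subgroup of the underlying surface, hence infinite cyclic, and already $\Out(G_e)$ has order two. So the whole lemma reduces to the rigid case.

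\emph{Rigid vertex.} Suppose $v$ is rigid and assume for contradiction that a sequence $\phi_n\in\Out^0(T)\cap\Out(G;\mk\calh,\calk)$ has pairwise distinct images $\rho_v(\phi_n)\in\Out(G_v)$. Choose representatives $\alpha_n\in\Aut(G)$ of $\phi_n$ preserving $G_v$ and apply the Bestvina--Paulin construction to the sequence $\alpha_n|_{G_v}\in\Aut(G_v)$: since $G_v$ is itself toral relatively hyperbolic, a subsequence gives, after rescaling, a non-trivial minimal isometric action of $G_v$ on an $\R$-tree $T_\infty$ with abelian arc stabilizers, and Rips theory produces a non-trivial splitting of $G_v$ over an abelian subgroup. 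One checks that this splitting is relative to $\Inc_v\cup(\calh\cup\calk)_{||G_v}$: for $H\in\calh_{||G_v}$, $\alpha_n$ acts on $H$ by conjugation in $G$, so after normalization $H$ fixes a point in $T_\infty$; for $E\in\Inc_v$ or $K\in\calk_{||G_v}$, both finitely generated abelian, $\alpha_n|_{G_v}$ preserves the $G_v$-conjugacy class of every element, so its stable length is preserved and becomes zero in the rescaled limit, which makes every element of $E$ or $K$ elliptic in $T_\infty$; finite generation then yields a global fixed point. The resulting splitting contradicts the rigidity of $v$ in $\Tcan$.

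\emph{Main obstacle.} The essential point, compared to the version in \cite{GL6}, is that groups in $\calk$ are only preserved (not acted on trivially), so one cannot simply apply the cited statement with $\calh\cup\calk$ in place of $\calh$. The decisive input is the abelianness of $\calk$: it guarantees that preserved-up-to-conjugacy subgroups act with zero stable length in the rescaled limit, hence pointwise elliptically, and therefore with a common fixed point on~$T_\infty$, which is what is needed to promote the limit splitting to a splitting relative to $\calk_{||G_v}$.
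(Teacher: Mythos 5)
Your reduction of the edge statement to the rigid-vertex statement (bipartiteness of $\Tcan$, cyclic edge groups at QH vertices, $\rho_e$ factoring through $\rho_v$ at rigid vertices) is exactly the paper's. The problem is in the rigid-vertex step, and it is precisely at the point you single out as the "decisive input". Your argument rests on the claim that if $\alpha_n$ sends an abelian subgroup $K\in\calk_{||G_v}$ (or a non-cyclic incident edge group $E\in\Inc_v$) to a conjugate, then it preserves the conjugacy class of every element of $K$, hence preserves (stable) lengths. This is false as soon as $K$ has rank $\ge 2$: the induced automorphism of $K\simeq\Z^k$, read through the conjugation identifying $\alpha_n(K)$ with $K$, can be an arbitrary (possibly infinite-order) element of $GL(k,\Z)$, and in a toral relatively hyperbolic group two elements of a maximal abelian subgroup are conjugate only if they are equal. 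So word lengths of elements of $K$ may blow up along the sequence $\alpha_n$, their translation lengths in the rescaled limit need not vanish, and the ellipticity of $K$ (and of non-cyclic groups in $\Inc_v$) in $T_\infty$ does not follow from this argument. Abelianness is not what makes the mechanism work; cyclicness is: only for a cyclic group does preservation up to conjugacy force each element to go to a conjugate of itself or of its inverse.

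The paper sidesteps exactly this issue by a short reduction instead of rerunning Bestvina--Paulin. One first discards all non-cyclic groups of $\calk$, obtaining $\calk_\Z$: this only enlarges the group whose image must be bounded, and it does not change $\Tcan$, since trees under consideration are already relative to all non-cyclic abelian subgroups. For the remaining cyclic groups, $\Out(G;\mk\calh,\mk\calk_\Z)$ has finite index in $\Out(G;\mk\calh,\calk_\Z)$ (each generator can only be sent to a conjugate of itself or its inverse), and the finiteness of the image at a rigid vertex for the trivial-action group is the result already quoted from \cite{GL6} (Prop.~4.7); finiteness for the larger group follows since images of finite-index subgroups have finite index in the image. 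Note also that in the cited machinery the non-cyclic incident edge groups, which are likewise only preserved up to conjugacy, are handled through the relative (parabolic) structure of $G_v$, not by a conjugacy-length argument; similarly, the limit of the rescaled actions of a relatively hyperbolic group is not directly an $\R$-tree from the word metric, which is another reason to invoke the cited statements rather than redo the construction. If you want to keep your direct approach, you must either restrict $\calk$ to cyclic groups first (as the paper does) or supply a genuinely different argument for ellipticity of distorted non-cyclic abelian subgroups in the limit.
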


\begin{proof}
 Define $\calk_\Z$ by removing  all non-cyclic groups from $\calk$.
Being freely indecomposable relative to $\calh\cup\calk$ is the same as being freely indecomposable relative to $\calh\cup\calk_\Z$, and   a tree is relative to $ (\calh\cup\calk)^{+ab}$ if and only if it is relative to $(\calh\cup\calk_\Z)^{+ab}$.  We may  therefore  view $\Tcan$ as the canonical JSJ tree relative to $(\calh \cup\calk_\Z)^{+ab}$.

Let $v$ be a rigid vertex. 
The group $\Out(G;\mk\calh,\calk)$ is contained in $\Out(G;\mk\calh,\calk_\Z)$, which contains
$\Out(G;\mk\calh,\mk\calk_\Z)$
with finite index.
 As explained above, 
the image of $\Out^0(T)\cap\Out(G;\mk\calh,\mk\calk_\Z)$ in $\Out(G_v)$ is finite (\cite{GL6}, Prop.\ 4.7). 
The first assertion of the lemma follows.

 Since $\Tcan$ is bipartite, every edge $e$ is incident to a vertex $v$ which is QH   or    rigid.
In the first case $G_e$ is cyclic, so there is nothing to prove. In the second case the map $\rho_e:\Out^0(T)\to\Out(G_e)$ factors through $\Out(G_v)$, and the second assertion follows from the first.
\end{proof}

\section{Finite classifying space}\label{clas}

In this section, we   prove that  
McCool groups of a toral relatively hyperbolic group have type VF (Theorem \ref{mc}),
and that so does the stabilizer of a splitting (Theorem \ref{mct}).
In the course of the proof,  we will  
describe the automorphisms of a given maximal abelian subgroup which are restrictions of an automorphism of $G$ belonging to a  given McCool group (Proposition \ref{image}).

We start  by recalling some standard facts 
about groups of type VF.

A group has type F if it has a finite classifying space, type VF if some finite index subgroup is of type F. A key tool for proving that groups have type F is the following  statement:

\begin{thm}[see for instance {\cite[Th.\ 7.3.4]{Geoghegan_topological}}] \label{extF}
Suppose that $G$ acts  simplicially and cocompactly   on a contractible simplicial complex $X$. If all point stabilizers have type F, so does $G$. 
In particular, being of type F is stable under extensions. \qed
\end{thm}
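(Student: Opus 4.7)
The plan is to construct a finite $K(G,1)$ by a Borel-type resolution that compensates for the non-freeness of the $G$-action on $X$. Let $EG$ be any contractible free $G$-CW complex (not assumed finite-dimensional). The diagonal action of $G$ on $X\times EG$ is free, and the space is contractible, so $Y:=(X\times EG)/G$ is a classifying space for $G$. The real task is to realize $Y$ up to homotopy as a \emph{finite} CW complex.

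Pick representatives $\sigma_1,\dots,\sigma_N$ of the $G$-orbits of simplices in $X$ (finitely many by cocompactness), with stabilizers $G_i:=G_{\sigma_i}$; by hypothesis each $G_i$ admits a finite classifying space $BG_i$. Over each simplex $\bar\sigma_i$ of $X/G$, the quotient $Y$ restricts to $\sigma_i\times_{G_i}EG$, a contractible bundle over $EG/G_i\simeq BG_i$. Build a finite model of $Y$ by induction on the skeleta of $X/G$: at each step, attach a copy of a finite model of the appropriate $BG_i$, glued along the attaching data inherited from $X/G$. Since an inclusion $G_i\hookrightarrow G_j$ (for $\sigma_i$ a face of $\sigma_j$) can be realized by a cellular map of finite classifying spaces, the gluings can be arranged as homotopy pushouts, and the result is a finite CW complex homotopy equivalent to $Y$, hence a finite $K(G,1)$. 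The extension statement is then immediate: given $1\to K\to G\to Q\to 1$ with $K$ and $Q$ of type F, take $X$ to be the universal cover of a finite $BQ$; then $X$ is contractible, $G$ acts simplicially and cocompactly on $X$ through the quotient $G\to Q$, and every point stabilizer equals $K$, which is of type F, so the first part applies.

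The main obstacle is the inductive gluing step: one must verify that the strict pushouts used to assemble finite models of the various $BG_i$ faithfully compute the homotopy type of $Y$. This requires a coherent choice of cellular models such that all subgroup-induced maps $BG_i\to BG_j$ are realized as cellular inclusions, so that attaching maps become cofibrations and strict pushouts agree with homotopy pushouts. Equivalently, one can organize the argument as an induction on $\dim X$: the one-dimensional case recovers the classical Bass--Serre result that an amalgamated product or HNN extension of type-F groups along a type-F subgroup is again of type F, and the higher-dimensional case follows by iterating the same amalgamation formalism over the cells of $X/G$.
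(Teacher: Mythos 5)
The paper does not actually prove this statement: it is quoted with a reference to Geoghegan (Th.\ 7.3.4) and used as a black box, so there is no internal argument to compare with. Your Borel-construction proof is essentially the standard textbook argument for results of this type, and its overall route is sound: $(X\times EG)/G$ is a $K(G,1)$, it is filtered by the preimages of the skeleta of $X/G$, the piece lying over a $k$-simplex $\sigma$ is $\sigma\times_{G_\sigma}EG\simeq BG_\sigma$, and one assembles a finite model by induction over the cells of $X/G$; the deduction of closure under extensions, by letting $G$ act through $Q$ on the universal cover of a finite simplicial $K(Q,1)$ (all point stabilizers equal to $K$), is correct.

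Three points need repair, all fixable. First, you should pass to a barycentric subdivision so that the action is rigid (a simplex stabilizer fixes the simplex pointwise); then simplex stabilizers are exactly stabilizers of barycenters, which is where the hypothesis on \emph{point} stabilizers is used, and the inclusions go the other way from what you wrote: if $\sigma_i$ is a face of $\sigma_j$ then $G_{\sigma_j}\subseteq G_{\sigma_i}$, so the induced maps are $BG_{\sigma_j}\to BG_{\sigma_i}$. Second, the ``main obstacle'' you single out is misdiagnosed: you do not need a coherent system of finite models in which every map $BG_{\sigma_j}\to BG_{\sigma_i}$ is a cellular inclusion (this is not in general arrangeable and is not what makes the pushouts homotopy pushouts). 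The cofibration along which each gluing takes place is $\partial\sigma\times BG_\sigma\hookrightarrow \sigma\times BG_\sigma$, which is automatic; what genuinely requires an argument is that the finite complex assembled this way is homotopy equivalent to $(X\times EG)/G$, and this is done by induction on skeleta, replacing each piece $\sigma\times_{G_\sigma}EG$ by $\sigma\times(\text{finite }BG_\sigma)$ and invoking the gluing lemma for homotopy pushouts (or mapping cylinders) at each stage. With these adjustments your sketch becomes a complete proof, in the same spirit as the complex-of-spaces/rebuilding proof behind the cited reference, and your closing remark that the one-dimensional case recovers the Bass--Serre statement for amalgams and HNN extensions of type F groups over type F subgroups is accurate.
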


If $G$ has a finite index subgroup acting as in the theorem, then $G$ has type VF.  In particular:

\begin{cor}\label{cor_extensionVF}
  Given an exact sequence $1\ra N\ra G\ra Q \ra 1$,  suppose that   $Q$ has type VF, and  $G$ has a finite index   subgroup $G_0<G$
  such that $G_0\cap N$ has type F. Then $G$ has type VF. \qed
\end{cor}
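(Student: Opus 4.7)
The plan is to reduce to a situation where Theorem \ref{extF} applies directly, by exhibiting a finite index subgroup of $G$ that sits in a short exact sequence whose kernel and quotient are both of type F. Once such a subgroup is in hand, stability of type F under extensions gives the conclusion immediately, and $G$ is then of type VF by definition.

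First I would use that $Q$ has type VF to pick a finite index subgroup $Q_1 \inc Q$ of type F, and let $G_1 \inc G$ be its preimage. Then $G_1$ has finite index in $G$ and contains $N$. Intersecting with the given subgroup $G_0$ gives $G_2 = G_0 \cap G_1$, still of finite index in $G$. Because $N \inc G_1$, one has $G_2 \cap N = G_0 \cap N$, which is of type F by hypothesis. The image $Q_2$ of $G_2$ in $Q$ is a finite index subgroup of $Q_1$ (as $G_2$ has finite index in $G_1$ and $G_1$ surjects onto $Q_1$). Since finite index subgroups of type F groups are of type F — a finite cover of a finite classifying space is itself a finite classifying space — the group $Q_2$ is of type F.

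This yields the short exact sequence $1 \to G_0 \cap N \to G_2 \to Q_2 \to 1$, in which both ends are of type F. Applying Theorem \ref{extF} (stability of F under extensions) gives that $G_2$ is of type F, and since $[G:G_2] < \infty$ we conclude that $G$ is of type VF. There is no substantial obstacle here; the only thing to get right is picking the finite index subgroup so that both the kernel piece and the quotient piece simultaneously become type F, which is why intersecting $G_0$ with the preimage of $Q_1$ is the correct move.
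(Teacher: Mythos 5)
Your argument is correct and is exactly the routine verification the paper leaves to the reader (the corollary is stated with an immediate \qed, relying on the remark that a finite index subgroup acting as in Theorem \ref{extF} suffices): you pass to the finite index subgroup $G_2=G_0\cap G_1$, identify its kernel as $G_0\cap N$ and its image as a finite index (hence type F) subgroup of $Q_1$, and invoke stability of type F under extensions. Nothing is missing.
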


\begin{rem} \label{subt}
Suppose that $G$ acts on $X$ as in Theorem \ref {extF}. If point   stabilizers are only of type VF,
one cannot claim that $G$ has type VF, 
even if $G$ is torsion-free. This subtlety was overlooked in Theorem 5.2 of \cite{GL1} (we will give a corrected statement in Corollary \ref{corr}), and it introduces technical complications
 (which would not occur if we only wanted to prove that the groups under consideration have  
 type $F_\infty$).
In particular,  to study the stabilizer of a tree with  non-cyclic edge stabilizers in Subsection \ref{sec_edge}, we have to prove     more precise   versions of certain  results (such as the  ``moreover'' in Theorem \ref{mcga}).
\end{rem}

\subsection{McCool groups are VF}

In this subsection we prove the following strengthening of Theorem \ref{mc}.

 \begin{thm} \label{mcga}
Let $G$ be a toral relatively hyperbolic group.
Let $\calh$ and $\calk$ be two finite  families of   finitely generated subgroups, 
      with each group in $\calk$   abelian.  
Then $\Out(G;\mk\calh,\calk) $ is of type VF. 

Morerover, if groups in $\calh$ are also abelian,  there exists a finite index subgroup \break  $\Out^1(G;\calh ,\calk) \inc\Out(G;\calh ,\calk) $ such that $\Out^1(G;\calh ,\calk) \cap \Out(G;\mk\calh,\calk) $ is of type $F$.
 \end{thm}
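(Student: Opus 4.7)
The proof will proceed in two reduction stages followed by an analysis via the exact sequence for automorphisms of a tree. My plan is first to reduce to the freely indecomposable case using the Grushko deformation space, then to use the canonical JSJ tree $\Tcan$ relative to $(\calh \cup \calk)^{+ab}$ provided by Subsection \ref{jsj}.

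\emph{Step 1 (Grushko reduction).} Consider the relative Grushko decomposition $G = G_1 * \cdots * G_n * F_p$ with respect to $\calh \cup \calk$, and set $\calg = \{G_1,\dots,G_n\}$. As noted in Subsection \ref{jsj}, $\Out(G;\calh,\calk,\calg)$ has finite index in $\Out(G;\calh,\calk)$, so it suffices to prove the theorem for $\Out(G;\mk\calh,\calk,\calg)$. Using the spine of the (contractible, cocompact) relative Grushko deformation space, there is an exact sequence whose kernel consists of twists (a finitely generated abelian group, since edge groups are trivial and the normalizers involved are understood) and whose quotient is essentially contained in $\prod_i \Out(G_i;\mk\calh_{|G_i},\calk_{|G_i})$ together with a finite symmetric piece coming from permutations of $F_p$-factors. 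Since each $G_i$ is itself toral relatively hyperbolic and freely indecomposable relative to $\calh_{|G_i}\cup\calk_{|G_i}$, by Corollary \ref{cor_extensionVF} it is enough to treat the freely indecomposable case.

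\emph{Step 2 (JSJ exact sequence).} Assume $G$ is freely indecomposable relative to $\calh \cup \calk$. The canonical JSJ tree $\Tcan$ (relative to $(\calh\cup\calk)^{+ab}$) is invariant under $\widehat{\Out}(G;\calh\cup\calk)$, hence in particular under $\Out(G;\mk\calh,\calk)$. Passing to the finite index subgroup acting trivially on the quotient graph $\Gcan$, we apply the exact sequence from Subsection \ref{automs}:
$$1 \to \calt \to \Out^0(\Tcan)\cap \Out(G;\mk\calh,\calk) \xra{\ \rho\ } \prod_{v\in V} \Out(G_v;\Inc_v,\mk\calh_{||G_v},\calk_{||G_v}).$$
The twist group $\calt$ is finitely generated abelian (edge groups are non-trivial abelian), hence of type F. It remains to show the image of $\rho$ is VF vertex-by-vertex.

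\emph{Step 3 (Vertex analysis).} The vertices of $\Gcan$ partition into three types. For a rigid vertex $v$, Lemma \ref{lem_fini} gives that the image of $\rho_v$ is finite. For a QH vertex $v$ with surface $\Sigma$, the image is commensurable with $\calp\calm^+(\Sigma)$, a mapping class group of a compact surface (with boundary behaviour prescribed), which is classically of type VF. For an abelian vertex $v$ (the $\V_1$-vertices of the tree of cylinders), $G_v\cong \Z^m$ is maximal abelian, and the image sits inside the subgroup of $GL(m,\Z)$ preserving the finitely many sub-lattices coming from $\Inc_v$ and fixing the lattice generated by $\calk_{||G_v}$; this is an arithmetic group, hence VF. The product of these three types of images is therefore VF, and by Corollary \ref{cor_extensionVF} applied to the exact sequence in Step 2, the group $\Out(G;\mk\calh,\calk)$ is VF.

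\emph{Step 4 (The ``moreover'' clause).} When the groups of $\calh$ are themselves abelian, every $\calh_{||G_v}$ is non-empty only at abelian vertices of $\Tcan$, so the finite-index issue at QH vertices is untangled from $\calh$. For each vertex group quotient we select once and for all a torsion-free, finite-index subgroup of type F: a torsion-free finite-index subgroup of $\calp\calm^+(\Sigma)$ exists by standard mapping class group theory, and congruence subgroups provide torsion-free finite-index subgroups of the arithmetic quotient at abelian vertices. Pulling these back via $\rho$ and intersecting with the finite-index subgroup of $\Out(G;\calh,\calk)$ coming from Step 1 defines $\Out^1(G;\calh,\calk)$; the twists contribute a free abelian kernel, and Theorem \ref{extF} (applied iteratively to contractible classifying spaces) yields type F for $\Out^1(G;\calh,\calk)\cap \Out(G;\mk\calh,\calk)$.

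\emph{Expected difficulties.} The main obstacle is not the VF statement itself, which follows formally from Corollary \ref{cor_extensionVF} once the pieces are assembled, but rather the careful bookkeeping needed for the ``moreover'' clause. As flagged in Remark \ref{subt}, extensions of VF groups need not even be virtually torsion-free, so in order to pass from VF to F one must choose coherent torsion-free finite-index subgroups at each vertex and verify that they admit a simultaneous pullback to a single finite-index subgroup $\Out^1(G;\calh,\calk)$ of $\Out(G;\calh,\calk)$. The hypothesis that $\calh$ consists of abelian subgroups is used precisely to avoid interference between QH surface dynamics and the family $\calh$, making this coherent choice possible.
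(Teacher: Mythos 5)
Your proposal follows the same overall architecture as the paper (Grushko reduction, canonical JSJ tree, vertex-by-vertex analysis, extension arguments), but several steps have genuine gaps. The most serious is in Step 3: knowing that each projection of the image of $\rho$ into $\Out(G_v)$ is VF does not imply that the image of $\rho$ itself is VF --- a subgroup of a product of VF groups need not be VF, nor even finitely generated. What is actually needed is to pass first to the subgroup where $\rho_v(\Phi)$ is trivial at rigid vertices and lies in a fixed type-F subgroup of $\calp\calm^+(\Sigma)$ at QH vertices (this has finite index by Lemma \ref{lem_fini}, which is where finiteness and finite generation of $\calh,\calk$ enter), to observe that such automorphisms act trivially on all edge stabilizers of $\Tcan$, so their images at abelian vertices land in the arithmetic group of automorphisms equal to the identity on incident edge groups, and then to invoke the combination statement of Subsection \ref{automs} (automorphisms of the $G_v$ acting trivially on incident edge groups extend by the identity) to show the image of $\rho$ on this restricted subgroup is \emph{exactly} a direct product $\prod_v C_v$ of type-F groups. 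Without this surjectivity onto a product your Step 3 does not go through. (Also, "finitely generated abelian hence type F" for $\calt$ needs torsion-freeness, which the paper addresses.)

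Step 1 has two further problems. With trivial edge groups the group of twists is \emph{not} finitely generated abelian: it is $\prod_i G_i^{n_i}/Z(G_i)$, generally non-abelian, and its type F comes from Dahmani's type F for the toral relatively hyperbolic factors. More importantly, in the infinitely-ended case there is no invariant splitting, hence no global exact sequence for $\Out(G;\mk\calh,\calk,\calg)$; one must use the cocompact action on the spine of the relative outer space and control point stabilizers, and --- precisely the subtlety of Remark \ref{subt} --- since those stabilizers are a priori only VF one cannot conclude VF directly: one needs the sharper statement that a single finite-index subgroup (pulled back from a torsion-free subgroup of $\Out(F_p)$) makes all stabilizers type F once the factor data are type F (the paper's Proposition \ref{fp}); this is also what allows the ``moreover'' clause to be assembled across Grushko factors in your Step 4. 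Finally, your justification of the ``moreover'' rests on a false claim: for abelian $\calh$ the families $\calh_{||G_v}$ are \emph{not} supported only at abelian vertices (a cyclic group of $\calh$ can be a boundary subgroup of a QH vertex or lie in a rigid vertex). The actual role of abelian-ness is that Lemma \ref{lem_fini} then applies with $\calh$ treated like $\calk$, so that the restricted subgroup has finite index in the larger group $\Out(G;\calh,\calk)$, not merely in $\Out(G;\mk\calh,\calk)$.
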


Recall  (Definition \ref{pres}) that  $\Out(G;\mk\calh ,\calk) $ consists of classes of automorphisms  acting trivially on each group $H_i\in  \calh$ (i.e.\ as conjugation by some $g_i\in G$), 
and leaving each $K_j\in\calk$ invariant up to conjugacy. 
 
 It will follow from 
  Corollary \ref{genmc}  
 that  the main assertion of Theorem \ref{mcga} holds 
 if $\calh$ is an arbitrary family of   subgroups  
 (see Corollary \ref{thm_general}), but finiteness is needed at this point  in order to apply Lemma \ref{lem_fini}.

\begin{con} \label{1}   In this section, a superscript  $-^1$,  as in $\Out^1(G;\calh ,\calk)$,   always indicates a subgroup of finite  index. The superscript  $-^0$ refers to a trivial action on a quotient graph of groups  (see Section \ref{automs}).
\end{con}

\subsubsection{The abelian case}
 
The following lemma deals with the case when $G=\Z^n$.

\begin{lem} \label{arithm}
 Let $\calh$ and $\calk$ be finite families of subgroups of $\Z^n$. Let $A=\Out(\bbZ^n;\mk\calh ,\calk) $ be the subgroup of $GL(n,\Z)$ consisting of matrices acting as the identity on  groups  $H_i\in\calh$ and leaving each $K_j\in\calk$ invariant. Then $A$ is of type VF. More precisely, every torsion-free subgroup of finite index $A'\inc A$ is of type F.
\end{lem}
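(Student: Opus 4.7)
The plan is to recognize $A$ as an arithmetic subgroup of a $\Q$-linear algebraic group and then invoke the Borel--Serre theorem together with the extension lemma Theorem \ref{extF}.

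First, I would check that the defining conditions are algebraic over $\Q$. Fixing $H_i$ pointwise is a system of linear equations on matrix entries (pick $\Z$-generators of $H_i$). Preserving $K_j$ setwise is, for $\varphi\in GL(n,\Z)$, equivalent to preserving the rational subspace $K_j\otimes\Q\subset\Q^n$, since $K_j=(K_j\otimes\Q)\cap\Z^n$; this condition is also algebraic over $\Q$. Hence there is a $\Q$-algebraic subgroup $\mathbf{A}\subset GL_n$ with $A=\mathbf{A}(\Z)$.

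Next, I would reduce to the reductive case via Levi decomposition. Up to a finite-index subgroup $A_0\subset A$ contained in the identity component, use a $\Q$-Levi decomposition $\mathbf{A}^\circ=\mathbf{L}\ltimes\mathbf{U}$, with $\mathbf{U}$ the unipotent radical and $\mathbf{L}$ reductive. This produces an exact sequence
$$1\to\Gamma_U\to A_0\to\Gamma_L\to 1,$$
where $\Gamma_U=A_0\cap\mathbf{U}(\Q)$ is a uniform lattice in the simply connected nilpotent Lie group $\mathbf{U}(\R)$ and $\Gamma_L$ is an arithmetic subgroup of the reductive $\Q$-group $\mathbf{L}$. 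After passing to a further finite-index subgroup, $\Gamma_U$ becomes torsion-free finitely generated nilpotent, acts freely and cocompactly on the contractible $\mathbf{U}(\R)\cong\R^d$, and is therefore of type F. By Borel--Serre, $\Gamma_L$ has a torsion-free finite-index subgroup of type F, acting freely and cocompactly on the contractible Borel--Serre bordification. Corollary \ref{cor_extensionVF} applied to the above extension yields type VF for $A$.

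For the ``moreover'' statement, let $A'\subset A$ be any torsion-free finite-index subgroup. Then $A'\cap\mathbf{U}$ is a torsion-free finitely generated nilpotent group, hence of type F, while $A'/(A'\cap\mathbf{U})$ embeds as a torsion-free arithmetic subgroup of the reductive group $\mathbf{L}$, which is of type F by Borel--Serre. Theorem \ref{extF} applied to the resulting short exact sequence shows $A'$ is of type F. The only non-routine input is the Borel--Serre theorem in the reductive case; the unipotent layer is handled automatically, since torsion-free cocompact lattices in simply connected nilpotent Lie groups are of type F, and extensions by such groups preserve type F.
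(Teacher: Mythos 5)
Your overall route (show $A$ is arithmetic, then quote Borel--Serre) is the same as the paper's, but two steps are not correct as written. The smaller one: the identity $K_j=(K_j\otimes\Q)\cap\Z^n$ fails whenever $K_j$ is not saturated (e.g.\ $K_j=\Z e_1+2\Z e_2\inc\Z^2$), so leaving $K_j$ invariant is strictly stronger than leaving $K_j\otimes\Q$ invariant, and in general there is \emph{no} $\Q$-algebraic group $\mathbf{A}$ with $A=\mathbf{A}(\Z)$. What is true (and what the paper actually claims) is commensurability: the integer matrices fixing the span of the $H_i$'s and preserving each $K_j\otimes\Q$ form the $\Z$-points of a $\Q$-group, they preserve the saturation of $K_j$, and the stabilizer of $K_j$ inside them has finite index because there are only finitely many subgroups of given index in the saturation. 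This repair is harmless for the rest of your argument.

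The serious gap is in the ``moreover'' part: you assert that $A'/(A'\cap\mathbf{U})$ is a \emph{torsion-free} arithmetic subgroup of the Levi $\mathbf{L}$, but torsion-freeness does not pass to this quotient (this is the Bieberbach phenomenon). Concretely, take $n=2$, $\calh=\emptyset$, $\calk=\{\langle e_1\rangle\}$, so $A$ consists of the matrices $\bigl(\begin{smallmatrix}\pm1&b\\0&\pm1\end{smallmatrix}\bigr)$, and let $A'=\langle\gamma\rangle$ with $\gamma=\bigl(\begin{smallmatrix}-1&-1\\0&-1\end{smallmatrix}\bigr)$: then $A'\cong\Z$ is torsion-free of finite index in $A$, while $A'\cap\mathbf{U}=\langle\gamma^2\rangle$ and $A'/(A'\cap\mathbf{U})\cong\Z/2\Z$, which is not of type F, so Theorem \ref{extF} cannot be applied to your exact sequence. (Your argument for the first assertion survives, because for type VF you may \emph{choose} the finite-index subgroup, e.g.\ the preimage of a torsion-free finite-index subgroup of $\Gamma_L$; but the ``moreover'' concerns an arbitrary torsion-free $A'$.) To close the gap you should either quote directly, as the paper does, the statement that every torsion-free arithmetic subgroup of $GL(n,\Q)$ is of type F, or repair the reduction: let $A'$ act through its quotient on the Borel--Serre bordification for $\mathbf{L}$ and apply Theorem \ref{extF}; point stabilizers are then extensions of finite groups by $A'\cap\mathbf{U}$, hence finitely generated torsion-free virtually nilpotent, and such groups (fundamental groups of infra-nilmanifolds) are of type F.
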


Recall  that $GL(n,\Z)$ is virtually torsion-free, so groups such as $A'$ exist.

\begin{proof}
The set of  endomorphisms of $\Z^n$ acting as the identity on $H_i$ and preserving $K_j$ is a linear subspace defined by  linear equations with rational coefficients. 
It follows that the groups $A$ and $A'$ are arithmetic:   they are commensurable with a subgroup of $GL(n,\bbZ)$
defined by $\Q$-linear equations. 
By Borel-Serre \cite{BorelSerre_corners}, every torsion-free arithmetic subgroup of $GL(n,\Q)$ is of type F.
\end{proof}

To deduce Theorem \ref{mcga} when $G$ is abelian, we  simply define $\Out^1(G;\calh ,\calk)$ as any   torsion-free finite index  subgroup  of  $\Out(G;\calh ,\calk) $.

If $G$ is not abelian, we shall distinguish two cases.

\subsubsection{The one-ended case} \label{oe}

We   
first assume that $G$ is freely indecomposable relative to $\calh\cup\calk$:  
one cannot write $G=A*B$ with each group of $\calh\cup\calk$ contained in a conjugate of $A$ or $B$.  
We then consider the canonical tree $\Tcan$ as in Subsection \ref{jsj} (it is a JSJ tree relative to $\calh$, $\calk$, and   to non-cyclic abelian subgroups).  
 It is invariant under  
$\Out(G;\calh,\calk)$, so $\Out(G;  \calh,\calk)\inc\Out(\Tcan)$.

We write $\Out^0(\Tcan)$ for the finite index subgroup consisting of automorphisms acting trivially on   the finite graph $\Gcan=\Tcan/G$, 
and  $$\Out^0(G; \calh,\calk)=\Out (G; \calh,\calk)\cap\Out^0(\Tcan);$$ it has finite index in $\Out (G; \calh,\calk)$.

Recall that non-abelian vertex stabilizers $G_v$ of $\Tcan$ (or vertex groups of $\Gcan$) are  rigid  or QH. Also recall from Subsection \ref{automs} that, for each vertex $v$, there is a map $\rho_v:\Out^0(\Tcan)\to\Out(G_v; \Inc_v)$, with 
$\Inc_v$ the family of incident edge groups (see Subsection \ref{tre}).

We define a subgroup $\Out^r(G;\calh ,\calk)\inc\Out (G;\calh ,\calk)$ 
by restricting to automorphisms $\Phi\in\Out^0(G;\calh,\calk)$, 
and imposing  conditions on the image of $\Phi$ by the maps $\rho_v$.

$\bullet$ If $G_v$ is rigid,  we   ask that $\rho_v(\Phi)$ be trivial.

$\bullet$ If $G_v$ is abelian,  we fix a torsion-free subgroup  of finite index   $\Out^1(G_v)\inc \Out(G_v)$, and 
  we ask that $\rho_v(\Phi)$ belong  to $\Out^1(G_v)$.

$\bullet$ If $G_v$ is QH, it is the fundamental group of a compact surface $\Sigma$. 
Each boundary component  is associated to an incident edge or a group in $\calh\cup\calk$ (see Subsection \ref{jsj}), so $\rho_v(\Phi)$  preserves the peripheral structure of $\pi_1(\Sigma)$ and may therefore  be represented by a homeomorphism of $\Sigma$.  Since groups in $\calh\cup\calk$, and their conjugates, only intersect $G_v$ along boundary subgroups,   the image of $\Out^0(G;\calh,\calk)$ by $\rho_v$ contains the mapping class group $\calp\calm^+(\Sigma)= \Out(G_v;\mk\Inc_v ,\mk\calh_{||G_v}, \mk \calk_{||G_v})$ (see  Subsection \ref{jsj}); the index is    finite. We fix a finite index subgroup $\calp\calm^{+,1}(\Sigma)$ of type F, and we require $\rho_v(\Phi)\in \calp\calm^{+,1}(\Sigma)$. In particular,  $\Phi$ acts trivially on all boundary subgroups of $\Sigma$.

Let  $\Out^r(G;\calh ,\calk)$ consist of automorphisms $\Phi\in\Out^0(G;\calh,\calk)$ whose images $\rho_v(\Phi)$ satisfy the conditions stated above. These automorphisms act trivially on edge stabilizers. 

It follows from Lemma \ref{lem_fini} that 
$\Out^r(G;\calh ,\calk) \cap \Out(G;\mk\calh,\calk) $ always has finite index in $\Out(G;\mk\calh,\calk) $. If groups in $\calh$ are abelian, then $\Out^r(G;\calh ,\calk)$ has finite index in $\Out (G;\calh ,\calk)$.
It therefore suffices to prove that  $$O:=\Out^r(G;\calh ,\calk) \cap \Out(G;\mk\calh,\calk) $$ is of type $F$  (this argument, based on Lemma \ref{lem_fini}, is the only place where we use  the assumptions on $\calh$ and $\calk$).

Every edge of $\Tcan$ has an endpoint $v$ with $G_v $ rigid or QH, so elements of $O$ act trivially on edge stabilizers of $\Tcan$. Consider an abelian vertex stabilizer $G_v$. Elements in $\rho_v(O)$ are the identity on incident edge groups and groups in $\calh_{||G_v}$, and leave groups in $\calk_{||G_v}$ invariant. By Lemma \ref{arithm} these conditions define a group $B_v\inc\Out(G_v)$ which is of type VF, and  $C_v:=B_v\cap\Out^1(G_v)$ is a group of type F containing $\rho_v(O)$.

Recall from Subsection \ref{automs} the exact sequence 
$$1\ra \Tw \ra  \Out^0(\Tcan)
\xra{\ \rho\ }\prod_{v\in V} \Out(G_v;\Inc _v). 
   $$ 
 We claim that  the image of $O$ by $\rho$ is a direct product $\prod_{v\in V} C_v$, with 
 $C_v$  as above 
 if $G_v$ is abelian, $C_v=\calp\calm^{+,1}(\Sigma)$ if $v$ is QH, and $C_v$ trivial if $ v$ is rigid. The image is contained in the product. Conversely, given a family $(\Phi_v)_{v\in V}$, with $\Phi_v\in C_v$, the automorphisms $\Phi_v$ act trivially on incident edge groups, so there is $\Phi\in\Out^0(\Tcan)$ with $\rho_v(\Phi)=\Phi_v$.
  Since $\Phi_v$ acts trivially on $\Inc_v\cup\calh_{||G_v}$ and preserves $\calk_{||G_v}$, this automorphism is in $O$. This proves the claim.
 
 It follows that $\rho(O)$ is of type F.
 The group of twists $\calt$ is contained in $O$, because twists act trivially on vertex groups and $T$ is relative to $\calh\cup\calk$, so we can conclude that $O$ is of type F by Theorem \ref {extF} if we know that $\calt$ is of type F. 
 The group $\calt$ is a finitely generated abelian group.  
 It is torsion-free, hence of type F, as shown in Section 4 of \cite{GL6}  (alternatively, one can replace  $\Out^r(G;\calh ,\calk)$ by its intersection with a torsion-free finite index subgroup of $\Out(G)$, which exists by Corollary 4.4 of \cite{GL6}).

This proves Theorem  \ref{mcga}  in the freely indecomposable  case. To prove it in general, 
we need to study automorphisms of free products. 
 
 \subsubsection{Automorphisms of free products}
 
 In this subsection, $G$ does not have to be relatively hyperbolic.

 Let $\calg=\{G_i\}$ be a family of subgroups of $G$. We have defined $\Out(G;\calg )$  as automorphisms leaving the conjugacy class of each $G_i$ invariant, and $\Out(G;\mk\calg)$  as automorphisms   acting trivially on each $G_i$. 
 
 More generally, consider a group of automorphisms $\bQ_i\subset \Out(G_i)$
and  $\bQ=\{\bQ_i\}$. We 
would like to define $\Out(G;\mk[\bQ]\calg )\inc\Out (G;\calg )$ as the automorphisms $\Phi$ acting on each $G_i$ as an element of $\bQ_i$. To be precise, given $\Phi\in\Out (G;\calg )$, choose   representatives $\varphi_i$ of $\Phi$ in $\Aut(G)$ with $\varphi_i(G_i)=G_i$. We say that $\Phi$ belongs to $\Out(G;\mk[\bQ]\calg)$ if every $\varphi_i$ represents an element of $\bQ_i$. This is well-defined (independent of the chosen $\varphi_i$'s) if each $G_i$ is a free factor (more generally, if the normalizer of $G_i$ acts on $G_i $ by inner automorphisms). 
 
The goal of this subsection is to show:
\begin{prop} \label{fp}
 Let $G=G_1*\dots*G_n*F_p$, with $F_p $ free  of rank $p$, and let $\calg=\{G_i\}$. Assume that all groups $G_i$ and $G_i/Z(G_i)$ have type F.  
 
 Let  $\bQ=\{\bQ_i\}$ be  a family of  subgroups   $\bQ_i\inc\Out(G_i)$. If every $\bQ_i$ is  of type   VF, 
then   $\Out(G;\mk[\bQ]\calg )$ has type VF.
 
More precisely, there exists a finite index subgroup $\Out^1(G;\calg)\inc\Out(G;\calg)$, independent of $\bQ$, 
  such that, if every $\bQ_i$ is  of type  F, 
then $\Out^1(G;\calg)\cap\Out(G;\mk[\bQ]\calg)$ has type F.

\end{prop}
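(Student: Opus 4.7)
The plan is to let $\Out(G;\calg)$ act on the spine $K$ of the Outer space associated to the Grushko decomposition $G = G_1*\dots*G_n*F_p$: a contractible, finite-dimensional simplicial complex on which $\Out(G;\calg)$ acts cocompactly, whose cells are parametrized by marked Grushko splittings $\Gamma$ of $G$ (vertex groups conjugate to some $G_i$ or trivial, edge groups all trivial). By Theorem \ref{extF} it then suffices to verify that cell stabilizers have type VF (respectively type F).

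Fix a cell $\sigma$ with associated splitting $\Gamma$ and Bass--Serre tree $T$. The finite-index subgroup $\Stab^0(\sigma)\inc\Stab(\sigma)$ acting trivially on $\Gamma$ fits into the exact sequence of Subsection \ref{automs}:
\[
1 \to \calt \to \Stab^0(\sigma) \xra{\ \rho\ } \prod_{v \in V}\Out(G_v;\Inc_v) \to 1.
\]
Since edge groups are trivial, $\Inc_v$ is a family of trivial subgroups, so $\prod_v \Out(G_v;\Inc_v)$ identifies with $\prod_{i=1}^n \Out(G_i)$, and $\rho$ is onto (the compatibility condition on edge groups is vacuous, so any family of vertex automorphisms extends). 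For trivial edge groups, the group of twists $\calt$ is an extension of a free abelian group, of rank equal to the first Betti number of $\Gamma$, by a direct product of copies of $G_i/Z(G_i)$, one factor per edge-endpoint adjacency modulo a basepoint; the hypothesis that each $G_i/Z(G_i)$ has type F then yields that $\calt$ has type F.

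Because each $G_i$ is a free factor, the restriction map $\pi:\Out(G;\calg) \to \prod_i \Out(G_i)$ is globally well defined, agrees with $\rho$ on $\Stab^0(\sigma)$ under the identification above, and satisfies $\Out(G;\mk[\bQ]\calg) = \pi^{-1}(\prod_i \bQ_i)$ by definition. Intersecting the previous sequence with $\Out(G;\mk[\bQ]\calg)$ yields
\[
1 \to \calt \to \Stab^0(\sigma) \cap \Out(G;\mk[\bQ]\calg) \to \prod_i \bQ_i \to 1.
\]
When each $\bQ_i$ has type VF, Corollary \ref{cor_extensionVF} makes the middle term of type VF, and so is the full cell stabilizer $\Stab(\sigma) \cap \Out(G;\mk[\bQ]\calg)$ as a finite extension of it; Theorem \ref{extF} applied to the cocompact action on $K$ then delivers type VF for $\Out(G;\mk[\bQ]\calg)$.

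For the \emph{moreover} statement, fix once and for all a torsion-free finite-index subgroup $\Out^1(G;\calg) \inc \Out(G;\calg)$ whose intersection with every cell stabilizer of $K$ lies in the corresponding $\Stab^0(\sigma)$; this can be arranged because only finitely many orbits of cells have to be controlled, and the construction depends only on $(G,\calg)$, not on $\bQ$. Cell stabilizers of $\Out^1(G;\calg) \cap \Out(G;\mk[\bQ]\calg)$ are then extensions of $\prod_i(\bQ_i \cap \pi(\Out^1))$ by $\calt$, which are of type F whenever each $\bQ_i$ is (since finite-index subgroups of type F groups are type F, and $\calt$ is already type F), so Theorem \ref{extF} concludes. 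The principal obstacle is the concrete identification of the twist group $\calt$ and its verification as type F from the assumption on $G_i/Z(G_i)$; once this has been settled, the remainder amounts to bookkeeping with Theorem \ref{extF} and the existence of the Outer space spine $K$.
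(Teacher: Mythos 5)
Your overall strategy is the same as the paper's (act on the spine of the relative outer space, use the exact sequence $1\to\calt\to\Out^0(T)\xra{\ \rho\ }\prod_v\Out(G_v;\Inc_v)$ and extension arguments), but two steps that carry the real weight are wrong or missing. First, your identification of the group of twists is incorrect: for trivial edge stabilizers $\calt$ is the direct product $\prod_i G_i^{n_i}/Z(G_i)$, where $n_i$ is the valence of the vertex carrying $G_i$ and $Z(G_i)$ is embedded diagonally; there is no free abelian factor of rank $b_1(\Gamma)$, and $\calt$ is not a product of copies of $G_i/Z(G_i)$ (try $G_i=\Z$: then $G_i/Z(G_i)$ is trivial while $G_i^{n_i}/Z(G_i)\cong\Z^{n_i-1}$). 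The correct type-F argument uses the extensions $1\to G_i^{n_i-1}\to G_i^{n_i}/Z(G_i)\to G_i/Z(G_i)\to 1$, so it needs the hypothesis that $G_i$ itself has type F, not only $G_i/Z(G_i)$; you yourself flag this identification as the ``principal obstacle'', and it is not settled by what you wrote.

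Second, and more seriously, the construction of $\Out^1(G;\calg)$ is a hand-wave at exactly the point where the paper has its key idea. You need a finite-index subgroup, independent of $\bQ$, whose intersection with the stabilizer of \emph{every} tree of the deformation space lies in $\Out^0(T)$. Controlling ``finitely many orbits of cells'' does not produce it: for a single $T$ there is no a priori finite-index subgroup of $\Out(G;\calg)$ meeting $\Stab(T)$ inside $\Stab^0(T)$ (this is a separability-type condition), and the torsion-free finite-index subgroup you invoke need not exist, since the $G_i$ are arbitrary groups of type F and $\Out(G;\calg)$ has no reason to be virtually torsion-free. The paper instead uses the map $\Out(G;\calg)\to\Out(F_p)$ obtained by killing the $G_i$'s, defines $\Out^1(G;\calg)$ as the pullback of a torsion-free finite-index subgroup of $\Out(F_p)$, and then \emph{proves} that the stabilizer of any $T\in\cald$ in $\Out^1(G;\calg)$ equals $\Out^0(T)$, because a non-trivial symmetry of $\Gamma$ fixing the terminal vertices gives a non-trivial finite-order element of $\Out(\pi_1(\Gamma))$ (the circle case being handled separately); this choice also keeps all twists and all identity-extensions of vertex automorphisms inside $\Out^1(G;\calg)$, so the stabilizer is an extension of the full $\prod_i\bQ_i$ by $\calt$. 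Finally, your deduction of the VF statement (``cell stabilizers are VF, so Theorem \ref{extF} gives VF'') is precisely the fallacy warned against in Remark \ref{subt}; the VF assertion must instead be deduced from the ``more precise'' statement by replacing each $\bQ_i$ by a finite-index subgroup of type F, as the paper does.
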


 The ''more precise'' assertion implies the first one, since $\Out(G;\mk[\calq']\calg)$ has finite index in $\Out(G;\mk[\calq]\calg)$ if every $\bQ'_i $ is a finite index subgroup of $ \bQ_i$.
 
 Assume that  $G_i$ and $G_i/Z(G_i)$ have type F.   The proposition  says in particular that the Fouxe-Rabinovitch group  $\Out (G;\mk\calg)$ is of type VF, and that $\Out (G;\calg ) $ is of type VF  if every $ \Out(G_i)$ is. 
If we consider the Grushko decomposition of $G$, then  $\Out (G;\calg )$ has finite index in $\Out(G)$ and we get:

  \begin{cor} [Correcting \cite{GL1}, Theorem 5.2] \label{corr}
  Let $G=G_1*\dots*G_n*F_p$, with $F_p $ free, $G_i$ non-trivial, not isomorphic to $\Z$,  and not a free product. If every $G_i$ and $G_i/Z(G_i)$ has type F, and every $\Out(G_i)$ has type VF, then $\Out(G)$ has type VF. \qed
  \end{cor}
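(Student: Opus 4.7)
The plan is to derive this directly from Proposition \ref{fp} by checking that the hypotheses apply and that $\Out(G;\calg)$ captures a finite-index part of $\Out(G)$, where $\calg=\{G_1,\dots,G_n\}$.

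First, I would argue that $G=G_1*\dots*G_n*F_p$ is (up to reordering) the Grushko decomposition of $G$: the $G_i$'s are non-trivial, not isomorphic to $\Z$, and not free products, so each is freely indecomposable and not infinite cyclic. By the uniqueness of the Grushko decomposition, every automorphism of $G$ must permute the conjugacy classes $[G_1],\dots,[G_n]$. This produces a homomorphism $\Out(G)\to\mathfrak{S}_n$, whose kernel is exactly $\Out(G;\calg)$. Hence $\Out(G;\calg)$ has finite index in $\Out(G)$, and since type VF is stable under passing to finite-index overgroups, it suffices to prove $\Out(G;\calg)$ has type VF.

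Next, I would apply Proposition \ref{fp} with the family $\bQ=\{\bQ_i\}$ where $\bQ_i=\Out(G_i)$. Each $G_i$ is a free factor of $G$, so its normalizer acts on it by inner automorphisms; this guarantees that the action of a representative of $\Phi\in\Out(G;\calg)$ on $G_i$ determines a well-defined element of $\Out(G_i)$, so taking $\bQ_i=\Out(G_i)$ imposes no real constraint, and $\Out(G;\mk[\bQ]\calg)=\Out(G;\calg)$. The hypotheses of Proposition \ref{fp} are exactly what we have assumed: each $G_i$ and each $G_i/Z(G_i)$ is of type F, and each $\bQ_i=\Out(G_i)$ is of type VF. The proposition then yields that $\Out(G;\calg)$ is of type VF, concluding the argument.

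There is no substantive obstacle here beyond correctly invoking Proposition \ref{fp}; the delicate point (and the one alluded to in Remark \ref{subt} as the reason the statement in \cite{GL1} needed correction) is simply that we must be working with $\Out(G;\calg)$ rather than trying to assemble a classifying space directly from vertex stabilizers having merely type VF. Routing the argument through Proposition \ref{fp}, which internally extracts a common finite-index subgroup $\Out^1(G;\calg)$ before intersecting with the various type-F data, avoids this pitfall.
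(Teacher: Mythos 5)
Your argument is correct and is essentially the paper's own proof: one observes that the given decomposition is a Grushko decomposition, so $\Out(G;\calg)$ has finite index in $\Out(G)$, and then applies Proposition \ref{fp} with $\bQ_i=\Out(G_i)$, for which $\Out(G;\mk[\bQ]\calg)=\Out(G;\calg)$. Nothing further is needed.
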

 
\begin{proof}[Proof of Proposition \ref{fp}]

 We prove the ``more precise'' assertion, so we assume that $\calq_i\subset \Out(G_i)$ has type F.
We shall apply Theorem \ref{extF} to the action of $\Out(G;\mk[\bQ]\calg )$   on  
the outer space defined in \cite{GL1}.  We let $\cald$ be
  the Grushko deformation space 
  relative to $\calg$, \ie the JSJ deformation space of $G$ over the trivial group relative to $\calg$ (see Subsection \ref{jsj}). Trees in $\cald$ have trivial edge stabilizers, and non-trivial vertex stabilizers are conjugates of the $G_i$'s. 

Like ordinary outer space \cite{CuVo_moduli}, the projectivization $\hat \cald$ of $\cald$  is a complex consisting of simplices with missing faces, and the spine of $\hat \cald$ is a simplicial complex. It is 
  contractible for the weak topology \cite{GL2}.
  
  The group $\Out  (G;\calg )$ acts on $\cald$, hence on the spine, 
  and the action of    the Fouxe-Rabinovitch group $\Out (G;\mk\calg)\inc \Out(G;\mk[\bQ]\calg )$  is cocompact because there are finitely many possibilities for the quotient graph $T/G$, for $T\in\cald$. In order to apply Theorem \ref{extF}, we just need to show that  stabilizers are of type F.

$\Out  (G;\calg )$ also acts on the free group (isomorphic to $F_p$) obtained from $G$ by killing all the $G_i$'s (it may be viewed as the topological fundamental group of $\Gamma=T/G$, for any $T\in\cald$). 
 In other words, there is a natural map $\Out  (G;\calg )\to \Out(F_p)$. We fix a torsion-free finite index subgroup $\Out^1(F_p)\inc\Out(F_p)$, and we define $\Out ^1 (G;\calg )\inc\Out  (G;\calg )$ as the pullback of  $\Out^1(F_p)$.

Given $T\in\cald$, we let $S$ be its stabilizer for the action of $\Out ^1 (G;\calg )$, and $S_\bQ$ its stabilizer for the action of $ \Out^1(G;\calg)\cap\Out(G;\mk[\bQ]\calg )$. We complete the proof by showing that $S_\bQ$ has type F.

We first  claim that $S$ equals $\Out^0(T)$, the group of automorphisms of $G$ leaving $T$ invariant and acting trivially on $\Gamma=T/G$. Clearly $\Out^0(T)\inc S$. Conversely, we have to show that any $\Phi\in S$ acts as the identity on $\Gamma$. First, $\Phi$ fixes all   vertices   of $\Gamma$ carrying a non-trivial group $G_v$, because $G_v$ is a $G_i$ (up to conjugacy)  and the $G_i$'s are not permuted. In particular, 
  by minimality of $T$, all terminal vertices of $\Gamma$ are fixed.  Also, by our definition of $\Out ^1 (G;\calg )$, the image of $\Phi$ in $\Out(\pi_1(\Gamma))$ is trivial or has infinite order. The claim follows because any non-trivial symmetry of $\Gamma$ fixing all terminal vertices maps to a non-trivial element of finite order in $\Out(\pi_1(\Gamma))$ if $\Gamma$ is not a circle.

The map $\rho$ (see Subsection \ref{automs}) maps $S$ onto $\prod_i\Out(G_i)$, and the image of $S_\bQ$ is $\prod_i\bQ_i$, a group of type F. The kernel is the group of twists $\calt$, which is contained in $S_\bQ$, so it suffices to check that $\calt$ has type F. Since edge stabilizers are trivial, $\calt$ is a direct product $\prod_iK_i$, with $K_i=G_i^{n_i}/Z(G_i)$; here $n_i$ is the valence of the vertex carrying $G_i$ in $\Gamma$, and the center $Z(G_i)$ is embedded diagonally (see \cite{Lev_automorphisms}). There are   exact sequences  $$1\to G_i^{n_i-1}\to G_i^{n_i}/Z(G_i)\to G_i /Z(G_i)\to1,$$ so the assumptions of the proposition ensure that $\calt$ is of type F.
\end{proof}

\subsubsection{The infinitely-ended case}
 
 We can now prove Theorem  \ref{mcga} in full generality. We let $G=G_1*\dots*G_n*F_p$ be the Grushko decomposition of $G$ relative to $\calh\cup\calk$ (see Subsection \ref{jsj}), and $\calg=\{G_i\}$.
 Each $G_i$ is toral relatively hyperbolic, so has type F by \cite{Dah_classifying}. 
Its center is trivial if $G_i$ is nonabelian, so $G_i/Z(G_i)$ also has type F. This will allow us to use Proposition \ref{fp}.

\begin{lem} \label{gloloc}  
Let $\bQ=\{\bQ_i\}$, with  $\bQ_i=\Out(G_i;\mk\calh_{|G_i},\calk_{|G_i})$, and let $\bG=\{\bG_i\}$ with $\bG_i=\Out(G_i;\calh_{|G_i},\calk_{|G_i})$. 

Then  $$\Out(G;\mk[\bQ]\calg)=\Out(G;\mk \calh,\calk)\cap\Out(G;\calg),$$ and $$\Out(G;\mk[\bG]\calg)=\Out(G;\calh,\calk)\cap\Out(G;\calg).$$

Moreover, $\Out(G;\mk[\bQ]\calg)$ has finite index in $\Out(G;\mk \calh,\calk)$, and $\Out(G;\mk[\bG]\calg)$ has finite index in $\Out(G; \calh,\calk)$.
\end{lem}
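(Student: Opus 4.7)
The strategy is to unfold the notation $\mk[\bQ]\calg$ defined just before Proposition \ref{fp}, relying on two structural facts about the relative Grushko decomposition $G=G_1*\dots*G_n*F_p$: (a) every non-trivial $H\in\calh$ (and every non-trivial $K\in\calk$) is contained, after $G$-conjugation, in a unique $G_i$ --- this is the defining property recalled in Subsection \ref{jsj} --- and (b) subgroups of $G_i$ that are $G$-conjugate are already $G_i$-conjugate, because $G_i$ is a free factor of $G$ (recalled in Subsection \ref{tre}). The finite-index statements will then follow formally from the remark in Subsection \ref{jsj} that $\Out(G;\calg)$ has finite index in $\Out(G;\calh,\calk)$.

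For the first equality, direction $\supseteq$: I consider $\Phi\in\Out(G;\mk\calh,\calk)\cap\Out(G;\calg)$ and choose representatives $\varphi_i\in\Aut(G)$ with $\varphi_i(G_i)=G_i$; I must show $\varphi_i|_{G_i}$ represents an element of $\bQ_i=\Out(G_i;\mk\calh_{|G_i},\calk_{|G_i})$. Given a non-trivial $H\in\calh_{|G_i}$, which by definition is $G$-conjugate to some $H_0\in\calh$, the triviality of $\Phi$ on $H_0$ translates into an element $h\in G$ with $\varphi_i(x)=hxh\m$ for all $x\in H$; since both $H$ and $hHh\m$ sit inside $G_i$, fact (b) forces $h\in G_i$, so $\varphi_i|_{G_i}$ acts trivially on $H$. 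The same argument, omitting ``trivially'', shows $\varphi_i|_{G_i}$ preserves $\calk_{|G_i}$ up to $G_i$-conjugacy. Direction $\subseteq$: given $\Phi\in\Out(G;\mk[\bQ]\calg)$ and any non-trivial $H\in\calh$, fact (a) conjugates $H$ into some $G_i$, where it represents an element of $\calh_{|G_i}$, and the condition $\varphi_i|_{G_i}\in\bQ_i$ then expresses exactly that $\Phi$ acts trivially on $H$; similarly for $\calk$. The proof of the second equality is obtained by erasing every mention of ``triviality''.

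Finally, the relative Grushko deformation space is preserved by $\Out(G;\calh,\calk)$, so this group permutes the conjugacy classes of the $G_i$'s; hence $\Out(G;\calg)\cap\Out(G;\calh,\calk)$ has finite index in $\Out(G;\calh,\calk)$, which gives the $\bG$-statement, and intersecting further with the subgroup $\Out(G;\mk\calh,\calk)\inc\Out(G;\calh,\calk)$ yields the $\bQ$-statement. The only real difficulty will be the bookkeeping (several layers of notation, checking that the restriction to a free factor is a well-defined element of $\Out(G_i)$, keeping track of the distinction between $G$-conjugacy and $G_i$-conjugacy); no geometric input beyond (a), (b) and the cited remark from Subsection \ref{jsj} is needed.
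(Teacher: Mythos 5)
Your proof is correct and follows the paper's argument essentially verbatim: conjugating the groups of $\calh\cup\calk$ into the $G_i$'s for one inclusion, using that $G_i$ is a free factor to force the conjugator into $G_i$ for the other, and using invariance of the relative Grushko deformation space (hence permutation of the $G_i$'s) for the finite-index claims. One small imprecision: the step ``fact (b) forces $h\in G_i$'' really rests on malnormality of the free factor $G_i$ (if $H\ne 1$, $H\inc G_i$ and $hHh\m\inc G_i$, then $h\in G_i$), which is what the paper invokes; the literally weaker statement that $G$-conjugate subgroups of $G_i$ are $G_i$-conjugate would only give a $G_i$-conjugator up to an element normalizing $H$, not $h\in G_i$ itself.
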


\begin{proof} If $\Phi$ belongs to $\Out(G;\mk[\bQ]\calg)$, it belongs to $\Out(G;\mk \calh,\calk)$ because every group in $\calh\cup\calk$ has a conjugate contained in some $G_i$. Conversely, automorphisms in $\Out(G;\mk \calh,\calk)$ preserve the Grushko deformation space relative to $\calh\cup\calk$ and therefore permute the $G_i$'s, so $\Out(G;\calg)\cap\Out(G;\mk \calh,\calk)$ has finite index in $\Out(G;\mk \calh,\calk)$. If $\varphi\in\Aut(G)$ leaves $G_i$ invariant and maps a non-trivial $H\inc G_i$ to a conjugate $gHg\m$, then $g\in G_i$ because $G_i$ is a free factor. This shows  $$\Out(G;\mk \calh,\calk)\cap\Out(G;\calg)\inc\Out(G;\mk[\bQ]\calg),$$ completing the proof for $\Out(G;\mk[\bQ]\calg)$. The proof for $\Out(G;\mk[\bG]\calg)$ is similar.
\end{proof}

  The first assertion of Theorem \ref{mcga} now follows immediately from the one-ended case together with Proposition \ref{fp}, since $\Out(G;\mk \calh,\calk)$ contains $\Out(G;\mk[\bQ]\calg)$ with finite index.
There remains to prove  the ``moreover''.

 Each $G_i$ is freely indecomposable relative to $\calh_{|G_i}\cup\calk_{|G_i}$,  so we may apply the ``moreover'' of Theorem \ref{mcga} to $G_i$.
 We get  a finite index subgroup   $\calr_i^1\subset \calr_i $ 
such that  $\calq_i^1:=\calr_i^1 
\cap \bQ_i$ 
has type F. Let   $\calr^1=\{\calr_i^1\}$, and $\calq^1=\{\calq_i^1\}$. 

By
  Proposition \ref{fp}, there is a finite index subgroup
$\Out^1(G;\calg)\inc\Out(G;\calg)$ such that
$\Out^1(G;\calg)\cap\Out (G;\mk[\bQ^1]\calg)$ has type  $F$.
Now write
$$\Out^1(G;\calg)\cap\Out (G;\mk[\bQ^1]\calg)=\Out^1(G;\calg)\cap\Out (G;\mk[\bG^1]\calg)\cap\Out  (G;\mk[\bQ ]\calg).$$

By Lemma \ref{gloloc}, we may replace the last term $\Out (G;\mk[\bQ ]\calg)$ by $\Out(G;\mk \calh,\calk)$.
Defining $$\Out^1(G;\calh,\calk):=\Out^1(G;\calg)\cap\Out (G;\mk[\bG^1]\calg),$$  we have shown that $\Out^1(G;\calh,\calk)\cap\Out(G;\mk\calh,\calk )$ has type F. There remains to check that $\Out^1(G;\calh,\calk)$ is a  finite index subgroup of $\Out(G;\calh,\calk)$.

Since $\Out^1(G;\calg)$ has finite index in $\Out (G;\calg)$, and $\calr_i^1$ is a finite index subgroup of $\calr_i$, 
the group $\Out^1(G;\calh,\calk)$ has finite index in $\Out (G;\calg)\cap\Out (G;\mk[\bG]\calg)$, which equals $\Out (G;\mk[\bG]\calg)$ and has finite index in   $\Out(G;\calh,\calk)$ by Lemma \ref{gloloc}.

 This completes the proof of Theorem  \ref{mcga}.

\subsubsection{The action on abelian groups} \label{aag}

We study the action of $\Out(G)$ on abelian subgroups. 
The result of this subsection (Proposition \ref{image}) will be needed in Subsection \ref{finpf}.

A toral relatively hyperbolic group has finitely many conjugacy classes of non-cyclic maximal abelian subgroups. Fix a representative $A_j$ in each class. Automorphisms of $G$ preserve the set of $A_j$'s (up to conjugacy), so some finite index subgroup of $\Out(G)$ maps to $\prod_j\Out(A_j)$. 
We shall show  in particular that \emph{the image of a suitable finite index subgroup $\Out'(G)\inc\Out(G)$ is  
  a product of  McCool groups  $ \prod_j\Out(A_j; \mk{\{F_j\}} )\inc\prod_j\Out(A_j)$}. 

This product structure expresses the fact that   automorphisms of non-conjugate maximal non-cyclic abelian subgroups do not interact. Indeed, consider a family of elements $\Phi_j\in\Out(A_j)$, and suppose that each $\Phi_j$, taken individually, extends to an automorphism $\widehat\Phi_j\in\Out'(G)$; then there is $\Phi\in\Out'(G)$ inducing all $\Phi_j$'s simultaneously.

  In fact, we will  work with two (possibly empty) finite families $\calh$, $\calk$ of  abelian 
subgroups, and we will restrict to $\Out(G;\mk\calh,\calk)$. 
 We shall therefore define a finite index subgroup $\Out'(G;\mk\calh,\calk)\inc\Out(G;\mk\calh,\calk)$. 

First assume that $G$ is freely indecomposable relative to $\calh\cup\calk$. As in Subsection \ref{oe}, we consider the canonical JSJ tree $\Tcan$, we restrict to automorphisms $\Phi\in\Out(G;\mk\calh,\calk)$ acting trivially on $\Gcan=\Tcan/G$, and we  define $\Out'(G;\mk\calh,\calk)$ by imposing conditions on the action on  non-abelian vertex groups $G_v$: if $G_v$ is QH, the action should be trivial on all boundary subgroups of $\Sigma$ (i.e.\ $\rho_v(\Phi)\in\calp\calm^+(\Sigma)$); if $G_v$ is  
rigid, then  $\rho_v(\Phi)$ should be trivial. We have explained in Subsection \ref{oe}   why this defines a subgroup of finite index  $\Out'(G;\mk\calh,\calk)$ in $\Out(G;\mk\calh,\calk)$.  Note that 
$\Out'(G;\mk\calh,\calk)$ acts trivially on edge groups of $\Tcan$.

If $G$ is not freely indecomposable relative to $\calh\cup\calk$, let $G=G_1*\dots*G_n*F_p$ be the relative Grushko decomposition.   To define $\Out'(G;\mk\calh,\calk)$, we require that $\Phi$ maps $G_i$ to $G_i$ (up to conjugacy), and the induced automorphism belongs to $\Out'(G_i;\mk\calh_{ | G_i},\calk_{ | G_i})$ as   defined above. 

Elements of $\Out'(G;\mk\calh,\calk)$ leave every $A_j$ invariant (up to conjugacy), and we denote by 
$$\theta:\Out'(G;\mk\calh,\calk)\to\prod_j\Out(A_j)$$ the natural map.

We can now state:

\begin{prop} \label{image}
Let $\calh$, $\calk$  be two finite families   of  abelian 
subgroups, and let  
 $\Out'(G;\mk\calh,\calk)$ be  the finite index subgroup of $\Out(G;\mk\calh,\calk)$    defined above. 

There exist subgroups $F_j\inc A_j$ such that the image of $\theta:\Out'(G;\mk\calh,\calk)\to\prod_j\Out(A_j)$ equals  $\prod_j\Out(A_j; \mk{\{F_j\}},\calk_{ | A_j})$.
\end{prop}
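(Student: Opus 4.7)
The plan is first to handle the freely indecomposable case by reading off the $F_j$'s directly from the canonical JSJ tree, and then to deduce the general case via the relative Grushko decomposition.

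Assume first that $G$ is freely indecomposable relative to $\calh\cup\calk$. Then each non-cyclic maximal abelian $A_j$ is the stabilizer of a unique vertex $v_j$ of $\Tcan$, and $\theta$ factors through $\prod_j\rho_{v_j}$ restricted to abelian vertices. Define
$$F_j := \langle \Inc_{v_j}\cup \calh_{|A_j}\rangle \inc A_j,$$
the subgroup generated by the incident edge groups at $v_j$ together with the (finite) family of subgroups of $A_j$ which are $G$-conjugate into $\calh$. For the forward inclusion, note that $\Tcan$ is bipartite with every edge incident to a QH or rigid vertex, and the definition of $\Out'(G;\mk\calh,\calk)$ forces the action on such vertex groups to be trivial on incident edge groups (trivial on rigid groups, and in $\calp\calm^+(\Sigma)$ on QH surfaces). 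Hence $\Phi\in\Out'(G;\mk\calh,\calk)$ acts trivially on every edge stabilizer of $\Tcan$, so $\rho_{v_j}(\Phi)$ is the identity on $\Inc_{v_j}$. For $H\in\calh_{|A_j}$, pick a representative $\varphi$ of $\Phi$ with $\varphi|_H=\id$ and a representative $\alpha$ preserving $A_j$; they differ by conjugation by some $g\in G$, and since $\alpha(H)\subset A_j$ and $H\subset A_j$, malnormality of $A_j$ forces $g\in A_j$, so $\alpha|_H=\id$ and $\rho_{v_j}(\Phi)$ fixes $H$ pointwise. A parallel argument shows that $\rho_{v_j}(\Phi)$ preserves each $K\in\calk_{|A_j}$.

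For the reverse inclusion, given $(\Phi_j)_j\in\prod_j\Out(A_j;\mk{\{F_j\}},\calk_{|A_j})$, define an automorphism $\Phi_v$ at every vertex $v$ of $\Gcan$ as follows: at abelian vertices, $\Phi_{v_j}:=\Phi_j$, which lies in $\Out(A_j;\mk\Inc_{v_j},\mk\calh_{||A_j},\calk_{||A_j})$ since $F_j\supset \Inc_{v_j}\cup\calh_{||A_j}$; at QH and rigid vertices, $\Phi_v:=\id$. By the combining inclusion recalled at the end of Subsection \ref{automs}, there exists $\Phi\in\Out^0(\Tcan)\cap\Out(G;\mk\calh,\calk)$ with $\rho_v(\Phi)=\Phi_v$ for every $v$. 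This $\Phi$ belongs to $\Out'(G;\mk\calh,\calk)$ by construction (trivial on rigid vertex groups, and identity hence in $\calp\calm^+(\Sigma)$ on QH vertex groups) and satisfies $\theta(\Phi)=(\Phi_j)_j$.

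Finally, to remove the freely indecomposable assumption, write the relative Grushko decomposition $G=G_1*\dots*G_n*F_p$. Every non-cyclic $A_j$ is conjugate into a unique $G_i$, and by the definition of $\Out'$ in the non-freely-indecomposable case, $\theta$ factors through the restriction maps $\Out'(G;\mk\calh,\calk)\to\prod_i\Out'(G_i;\mk{\calh_{|G_i}},\calk_{|G_i})$. By the combining techniques of Lemma \ref{gloloc} and Proposition \ref{fp} (automorphisms of distinct free factors can be adjusted independently, and twists in the Grushko tree have trivial image on the $A_j$'s), the image of $\theta$ is the product over $i$ of the images coming from each $G_i$, so applying the freely-indecomposable case to each $G_i$ yields the required $F_j$'s. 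The main delicate point is the forward inclusion on $\calh_{|A_j}$: one has to convert a representative of $\Phi$ that fixes $H$ pointwise into one preserving $A_j$ while ensuring that the correcting inner automorphism lies in $A_j$. This is precisely where malnormality of maximal abelian subgroups in toral relatively hyperbolic groups is essential.
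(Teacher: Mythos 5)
Your argument follows the paper's proof closely (reduce to the freely indecomposable case via the relative Grushko decomposition, then read off $F_j$ from $\Tcan$ and realize arbitrary automorphisms via the combining statement of Subsection \ref{automs}), but it contains one genuine gap: the opening claim that ``each non-cyclic maximal abelian $A_j$ is the stabilizer of a unique vertex $v_j$ of $\Tcan$'' is false. Since $\Tcan$ is relative to non-cyclic abelian subgroups, $A_j$ is elliptic, but the vertex it fixes need not be an abelian ($\V_1$) vertex: $A_j$ is a $\V_1$-vertex stabilizer only when it is a cylinder stabilizer, i.e.\ when it contains nontrivial edge stabilizers; otherwise it is properly contained in a non-abelian vertex stabilizer, which is necessarily rigid (it cannot be QH, since QH vertex groups are free). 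This happens, for instance, whenever $\Tcan$ is a point, i.e.\ when $G$ admits no abelian splitting relative to $(\calh\cup\calk)^{+ab}$, and more generally whenever $A_j$ meets no edge stabilizer nontrivially.

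For such an $A_j$ your definition $F_j=\langle\Inc_{v_j}\cup\calh_{|A_j}\rangle$ is undefined, and no subgroup of that shape can work: elements of $\Out'(G;\mk\calh,\calk)$ act trivially on rigid vertex stabilizers, so the $j$-th component of the image of $\theta$ is trivial, whereas your product formula would predict all automorphisms of $A_j$ fixing incident edge groups and $\calh_{|A_j}$. In particular your reverse inclusion breaks down there: you cannot impose the identity on the rigid vertex group $G_v$ while inducing a nontrivial $\Phi_j$ on $A_j\inc G_v$. The fix is exactly the case distinction made in the paper: set $F_j=A_j$ when $A_j$ lies in a rigid vertex stabilizer, and use your formula only when $A_j$ is an abelian vertex stabilizer. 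With that correction, the rest of your argument (triviality on edge groups of $\Tcan$, the malnormality argument for $\calh_{|A_j}$, the combining step, and the Grushko reduction) is sound and essentially identical to the paper's proof.
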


Recall that the $A_j$'s are representatives of conjugacy classes of non-cyclic maximal abelian subgroups.

\begin{proof}
The $A_j$'s are contained (up to conjugacy) in factors $G_i$ of the Grushko decomposition relative to $\calh\cup\calk$, and the $G_i$'s are invariant under $\Out'(G;\mk\calh,\calk)$.
 Since any  family 
of automorphisms $\Phi_i\in\Out'(G_i;\mk\calh_{|G_i},\calk_{|G_i})$ 
extends to an automorphism 
$\Phi\in\Out'(G;\mk\calh,\calk)$, 
we may assume that $G$ is freely indecomposable relative to $\calh\cup\calk$.

Consider $\Tcan$ as above. If $A_j$ is contained in a rigid vertex stabilizer, then $\Out'(G;\mk\calh,\calk)$ acts trivially on $A_j$ and we define $F_j=A_j$. If not, $A_j$ is a vertex stabilizer $G_v$. Vertex stabilizers adjacent to $v$ are rigid or QH, and because of the way we defined $\Out'(G;\mk\calh,\calk)$ it leaves $A_j$ invariant and acts   trivially  on incident edge groups. It also acts trivially on the groups belonging to $\calh _{ | A_j}$.

Defining $F_j$ as the subgroup of $A_j$ generated by incident edge groups and groups in $\calh _{ | A_j}$, we have proved that the image of $\theta$ is contained in $\prod_j\Out(A_j; \mk{\{F_j\}},\calk_{ | A_j})$.
Conversely, choose a family $\Phi_j\in\Out(A_j; \mk{\{F_j\}},\calk_{ | A_j})$. As explained in Subsection \ref{automs}, there exists $\Phi\in\Out^0(\Tcan)$ acting trivially on cyclic, rigid, and QH vertex stabilizers, and inducing $\Phi_j $ on $A_j$. 
We  check    that $\Phi$ acts trivially on any $H\in\calh$. 
Such a group $H$ fixes a vertex $v\in \Tcan$. If $G_v$ is cyclic, rigid, or QH, the action of $\Phi$ on $H$ is trivial. If not, $G_v$ is (conjugate to) an $A_j$ and the action is trivial because $H\inc F_j$. 
A similar argument shows that $\Phi$ preserves $\calk$ up to conjugacy, so $\Phi\in \Out(G;\mk\calh,\calk)$.
Since $\Phi$ acts trivially on rigid  
and QH vertex stabilizers, $\Phi\in \Out'(G;\mk\calh,\calk)$.
\end{proof}

\subsection{Automorphisms preserving a tree}

We now study the stabilizer of a tree. The following theorem clearly implies Theorem \ref{mct}. 

 \begin{thm} \label{mcgg}
Let $G$ be a toral relatively hyperbolic group.  Let $T$ be a simplicial tree on which $G$ acts with abelian edge stabilizers. Let 
$\calk$ be a finite family of abelian subgroups of $G$,  each of which fixes a point in $T$.
Then $\Out(T,\calk)=\Out(T)\cap\Out(G;\calk)$ 
is of type VF.
\end{thm}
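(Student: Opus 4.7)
The plan is to reduce to Theorem \ref{mcga} via the exact sequence from Subsection \ref{automs}. First, pass to the finite-index subgroup $\Out^0(T,\calk) := \Out(T,\calk) \cap \Out^0(T)$ acting trivially on $\Gamma = T/G$; it suffices to show this group is of type VF. The sequence
$$1 \to \calt \to \Out^0(T,\calk) \xra{\ \rho\ } I \to 1$$
expresses $\Out^0(T,\calk)$ as an extension of $I := \rho(\Out^0(T,\calk))$ by the group of twists $\calt$, which is finitely generated abelian and, after a further finite-index refinement, torsion-free and thus of type F. By Corollary \ref{cor_extensionVF}, it is enough to prove that $I$ is of type VF.

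Since edge stabilizers of $T$ are abelian (hence relatively quasiconvex), each vertex group $G_v$ is itself toral relatively hyperbolic. The families $\Inc_v$ and $\calk_{||G_v}$ are finite families of finitely generated abelian subgroups of $G_v$, so the ``moreover'' of Theorem \ref{mcga} supplies a finite-index subgroup $\Out^1(G_v; \Inc_v, \calk_{||G_v}) \subset \Out(G_v; \Inc_v, \calk_{||G_v})$ whose intersection with $\Out(G_v; \mk\Inc_v, \calk_{||G_v})$ is of type F. By Lemma \ref{modif}, $I$ is exactly the group of compatible tuples in $\prod_v \Out(G_v; \Inc_v, \calk_{||G_v})$. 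Let $I^\sharp \subset I$ be the finite-index subgroup of compatible tuples whose $v$-th entries lie in $\Out^1(G_v; \Inc_v, \calk_{||G_v})$; it suffices to prove that $I^\sharp$ is of type F.

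Consider the edge-restriction map
$$\sigma : I^\sharp \to \prod_e \Out(G_e), \qquad (\Phi_v)_v \mapsto (\rho_{v,e}(\Phi_v))_e,$$
which is well-defined precisely by the compatibility conditions. Its kernel is $\prod_v [\Out^1(G_v;\Inc_v,\calk_{||G_v}) \cap \Out(G_v;\mk\Inc_v,\calk_{||G_v})]$, a finite product of type F groups, hence of type F. The hard part — which is exactly the subtlety flagged in Remark \ref{subt} — is to prove that the image $\sigma(I^\sharp)$ is of type VF; one cannot merely invoke that it sits inside the VF group $\prod_e \Out(G_e) \cong \prod_e GL(n_e,\Z)$. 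The strategy is to use Proposition \ref{image}, applied inside each toral relatively hyperbolic vertex group $G_v$, to identify the image of the restriction $\rho_{v,e}$ with a McCool-type group of the maximal abelian subgroup of $G_v$ containing $G_e$. Combining these local descriptions at each edge shows that $\sigma(I^\sharp)$ is cut out in $\prod_e GL(n_e,\Z)$ by $\Q$-linear equations, hence is an arithmetic subgroup and so of type VF by Borel--Serre (as in Lemma \ref{arithm}).

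Applying Corollary \ref{cor_extensionVF} to the extension $1 \to \ker\sigma \to I^\sharp \to \sigma(I^\sharp) \to 1$ yields that $I^\sharp$ is of type VF, hence so is $I$, and then so is $\Out^0(T,\calk)$ via the extension by $\calt$. This completes the proof of Theorem \ref{mcgg}.
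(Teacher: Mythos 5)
Your overall skeleton is the same as the paper's (split off the twists and the vertex actions with the ``moreover'' of Theorem \ref{mcga} to get a type-F kernel, then control the induced action on edge groups and conclude via Corollary \ref{cor_extensionVF}), but the step you label as ``the hard part'' is exactly where the real work lies, and your treatment of it has a genuine gap. First, you apply Proposition \ref{image} to a vertex group $G_v$ to describe the image of $\rho_{v,e}$, but that proposition only speaks about the maximal non-cyclic abelian subgroups $A_j$ of $G_v$, pairwise non-conjugate; for an arbitrary $T$ a non-cyclic edge stabilizer $G_e$ need not be maximal abelian in $G_v$, and distinct incident edges may carry equal or conjugate stabilizers. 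This is precisely why the paper first replaces $T$ by the refined tree $\hat T$ of Lemma \ref{norm} (which has the same $\Out(T)$), so that non-trivial incident edge groups at non-abelian vertices become maximal abelian and pairwise distinct; without this normalization your ``local description at each edge'' is not available. Second, the image of your map $\sigma$ is not determined by independent conditions at the individual edges, and it is not ``cut out by $\Q$-linear equations'' in $\prod_e GL(n_e,\Z)$: edge groups lying in a common maximal abelian subgroup $E_j$ (a common cylinder), and the abelian vertex groups, impose joint conditions, the key one being that the induced automorphism must \emph{extend} to an automorphism of $E_j$ preserving $\calk_{|E_j}$. That extension condition is existential, not equational; in the paper one groups the edge groups into the subgroups $B_j\subset E_j$, proves (this is the surjectivity lemma of Subsection \ref{finpf}, using Lemma \ref{modif} and Proposition \ref{image} vertex by vertex, including the delicate cases of abelian vertex groups equal to some $E_j$ and the verification that the assembled automorphism still preserves $\calk$) that the image of a suitable finite-index subgroup is exactly $\prod_j Q_j$, and then shows each $Q_j$ is arithmetic as the image of an arithmetic group under a rational homomorphism, invoking Borel's theorem \cite{Borel_density} rather than Lemma \ref{arithm} alone. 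None of this is supplied by your one-sentence ``combining these local descriptions''.

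A smaller inaccuracy: your identification of $I=\rho(\Out^0(T,\calk))$ with \emph{all} compatible tuples in $\prod_v\Out(G_v;\Inc_v,\calk_{||G_v})$ fails when some $K\in\calk$ is contained in an edge stabilizer, since $\calk_{||G_v}$ only records groups fixing a unique point; a compatible tuple then lifts to an automorphism of $G$ with no reason to preserve such a $K$. This does not affect your computation of $\ker\sigma$ (the product of the type-F intersections does lie in $I$, because automorphisms acting trivially on all edge groups fix those $K$'s), but it is another sign that the conditions coming from $\calk$ and from edge groups must be tracked globally, as in the paper's conditions defining $Q_j$. In short: the reduction steps are fine and match the paper, but the identification and VF-ness of the image on edge groups — the content of Lemma \ref{norm}, Lemma \ref{pt2} and Subsection \ref{finpf} — is asserted rather than proved, and as stated it is not correct.
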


The group $\Out(T,\calk)$ is the subgroup of $\Out(G)$ consisting of automorphisms leaving $T$ invariant and mapping each group of $\calk$ to a conjugate (in an arbitrary way). 
    The tree $T$ is assumed to be minimal, but it may be a point, it may have trivial edge stabilizers, and non-cyclic abelian subgroups need not be elliptic.

Theorem \ref{mcga}  proves Theorem \ref{mcgg} when $T$ is  a point. 
Also note that, if $G$ is abelian and $T$ is not a point, then $T$ is a line on which
$G$ acts by  integral  translations, 
and  $\Out(T,\calk)$ is of type VF because it equals $\Out(G;\calk\cup \{N\})$, with
  $N$   the kernel of the action of $G$ on $T$.

Thus, we assume from now on that $G$ is not abelian. 
We will prove Theorem \ref{mcgg} when $T$ has cyclic edge stabilizers before treating the general case.  This special case is much easier because $\Out(G_e)$ is finite for every edge stabilizer $G_e$, and we may apply Proposition 2.3 of   \cite{Lev_automorphisms}.

\subsubsection{Cyclic edge   stabilizers} \label{cyc}

In this subsection we prove  Theorem \ref{mcgg} when all edge stabilizers $G_e$ of $T$ are cyclic (possibly trivial); this happens in particular if $G$ is hyperbolic.   

As in Subsection \ref{automs}, we consider the exact sequence 
$$1\to\calt\to\Out^0(T)\xra{\ \rho\ }\prod_{v\in V}\Out(G_v;\Inc_v) .$$
The image of $\rho$ contains $\prod_{v\in V}\Out(G_v;\mk\Inc_v)$, and the index is finite because 
all groups $\Out(G_e)$ are finite (see \cite{Lev_automorphisms}, where $\Out(G_v;\mk\Inc_v)$  is denoted $PMCG(G_v)$).
The preimage of $\prod_{v\in V}\Out(G_v;\mk\Inc_v)$ is thus a finite index subgroup $\Out^1(T)\inc\Out(T)$.

We want to prove that
$\Out (T,\calk)$ is of type VF, so we restrict the preceding discussion to $\Out (T,\calk)$.
Let 
$$\Out^1(T,\calk)=\Out^1(T)\cap\Out(G;\calk),$$ a finite index subgroup. We show that $\Out^1(T,\calk)$ is of type VF  (this will not use   the assumption that edge stabilizers are cyclic).

The image of $\Out^1(T,\calk) $ by $\rho$ is contained in $\prod_{v\in V}\Out(G_v; \mk\Inc_v, \calk_{||G_v})$, with $\calk_{||G_v}$ as in Subsection \ref{tre}, and arguing as in Subsection \ref{automs} one sees that equality holds.
On the other hand, $\Out^1(T,\calk)$
  contains $\calt$ because twists act trivially on vertex stabilizers, hence on $\calk$ since  groups of $\calk$ are elliptic in $T$. 
We therefore  have an exact sequence
$$1\to\calt\to\Out^1(T,\calk)\to\prod_{v\in V}\Out(G_v; \mk\Inc_v, \calk_{||G_v})\to1 .$$

Vertex stabilizers are toral relatively hyperbolic, so the product  is of type VF by Theorem   \ref{mcga} applied to the $G_v$'s.
We conclude the proof by showing that  $\calt$ is of type F. This will imply that $\Out^1 (T,\calk)$, hence $\Out (T,\calk)$, is VF.

We claim that $\calt$ is isomorphic to the direct product of a finitely generated abelian group   and a finite number of copies of non-abelian
vertex groups $G_v$.
We use the presentation of $\calt$ given in Proposition 3.1 of \cite{Lev_automorphisms}. 
It says that 
$\calt$ can be written as a quotient  $$\calt=\prod_{e,v} Z_{G_v}(G_e)/\grp{\calr_V,\calr_E},$$
the product being taken over all pairs $(e,v)$ where $e$ is an edge incident to $v$;
here $\calr_E=\prod_e Z(G_e)$ is the group of edge relations,
and  $\calr_V=\prod_{v} Z(G_v)$ is the group of vertex relations, both embedded naturally in $\prod_{e,v} Z_{G_v}(G_e)$.
Every group $Z_{G_v}(G_e)$ is abelian, unless $G_e$ is trivial and $G_v$ is non-abelian.
In this case $Z_{G_v}(G_e)=G_v$, and it is not affected by the edge and vertex relations since 
both $Z(G_v)$ and $Z(G_e)$ are trivial.
Our claim follows.

 It follows that $\calt$ 
is of type F provided that it is torsion-free. One may show that this is always the case, but it is simpler to replace $\Out^1(T,\calk)$ by its intersection with a torsion-free finite index subgroup of $\Out(G)$.

\subsubsection{Changing $T$}
We shall now prove Theorem \ref{mcgg} in the general case.

The first step, carried out in this subsection,  is to replace $T$ by a better tree $\hat T$ (satifsfying the second assertion of the lemma below). When all edge stabilizers are non-trivial, $\hat T$ may be viewed as the smallest common refinement  (called  lcm in   \cite{GL3b}) of $T$ and its tree of cylinders (see Subsection \ref{jsj}). 
Here is the 
construction of $\hat T$.

Consider edges of $T$ with non-trivial stabilizer. We say that two such edges belong to the same cylinder if their stabilizers commute. Cylinders are subtrees and meet in at most one point. A vertex $v$ with all incident edge groups trivial belongs to no cylinder. Otherwise $v$ belongs to one cylinder if $G_v$ is abelian, to infinitely many cylinders if $G_v$ is not abelian. 
To define $\hat T$, we shall refine $T$ at vertices $x$ belonging to infinitely many cylinders.

Given such an $x$, let $S_x$ be the set of cylinders $Y$ such that $x\in Y$. We replace $x$ by the cone $T_x$ on $S_x$: there is a central vertex, again denoted   by $x$, and vertices $(x, s_Y)$ for  $Y\in S_x$, with  an edge between $x$ and $(x,s_Y)$. Edges $e$ of $T$ incident  to $x$ are attached to $T_x$ as follows: if the stabilizer of $e$ is trivial, we attach it to the central vertex $x$; if not, $e$ is contained in a cylinder $Y$ and we attach $e$ to the   vertex $(x,s_Y)$, noting that $G_e$ leaves $Y$ invariant.

Performing this operation at each $x$ belonging to infinitely many cylinders yields a tree $\hat T$. The construction being canonical, there is   a natural  action of $G$ on $\hat T$, and $\Out(T)\inc\Out(\hat  T)$.

\begin{lem}\label{norm}
\begin{enumerate}
\item Edge stabilizers of $\hat T$ are abelian, $\hat T$ is dominated by $T$, and $\Out(\hat T)=\Out(T)$.
\item   Let $G_v$ be a non-abelian vertex stabilizer of $\hat T$. Non-trivial incident edge stabilizers $G_e$ are maximal abelian subgroups of $G_v$. If $e_1$, $e_2$ are edges   of $\hat T$ incident to $v$ with $G_{e_1}$, $G_{e_2}$ equal and  non-trivial,   then $e_1=e_2$. 
\end{enumerate}
\end{lem}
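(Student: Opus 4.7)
The plan is to compute the stabilizers of the new vertices and edges of $\hat T$ using the CSA property, deduce parts 1 and 2, and then use part 2 to establish the canonicity needed for $\Out(\hat T)=\Out(T)$. For each cylinder $Y$ of $T$, the edge stabilizers $\{G_e\}_{e\inc Y}$ are nontrivial and pairwise commute, so they lie in a unique maximal abelian subgroup $A_Y\inc G$ by CSA. The preliminary step is to identify the setwise stabilizer $\Stab_G(Y)=A_Y$. Any $a\in A_Y$ preserves each $G_e\inc A_Y$, so $ae$ stays in the cylinder of $e$, giving $A_Y\inc\Stab_G(Y)$. Conversely, any $g\in G$ with $gY=Y$ conjugates some nontrivial $G_e\inc A_Y$ into $A_Y$, so $A_Y\cap gA_Yg\m\ne 1$, and malnormality of $A_Y$ gives $g\in A_Y$.

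Granting this, original edges of $T$ keep their (abelian) stabilizer in $\hat T$, while a cone edge $e^*=[x,(x,s_Y)]$ has stabilizer $G_x\cap\Stab_G(Y)=G_x\cap A_Y$, which is abelian; this proves the first claim of part 1. Collapsing all cone edges defines a $G$-equivariant map $\hat T\to T$, exhibiting $\hat T$ as a refinement of $T$; conversely each $G_x$ fixes the corresponding central vertex of $\hat T$, so $T$ dominates $\hat T$.

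For part 2, let $G_v$ be non-abelian. A satellite vertex has abelian stabilizer $G_x\cap A_Y$, so $v$ is not a satellite. If $v$ is a vertex of $T$ that was not blown up, the trichotomy recalled just before the lemma forces $v$ to lie in no cylinder (since $G_v$ is non-abelian), so all incident edges of $v$ in $\hat T$ have trivial stabilizer and the claim is vacuous. Otherwise $v$ is the central vertex of some cone $T_x$, and the nontrivial incident edges at $v$ are exactly the cone edges $[x,(x,s_Y)]$, with $Y$ ranging over cylinders containing $x$. Each stabilizer $G_x\cap A_Y$ is maximal abelian in $G_x$: if $c\in G_x$ centralizes a nontrivial $a\in G_x\cap A_Y$, maximality of $A_Y$ in $G$ yields $c\in A_Y$, hence $c\in G_x\cap A_Y$. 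Finally, if two cone edges $e_1\ne e_2$ at $v$ share a nontrivial stabilizer, the corresponding maximal abelian subgroups $A_{Y_1}$ and $A_{Y_2}$ intersect nontrivially, so malnormality forces $A_{Y_1}=A_{Y_2}$, whence $Y_1=Y_2$ and $e_1=e_2$, a contradiction.

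It remains to deduce $\Out(\hat T)=\Out(T)$. The inclusion $\Out(T)\inc\Out(\hat T)$ is recorded before the lemma (by canonicity of the construction $T\mapsto\hat T$). For the reverse inclusion, part 2 gives an intrinsic characterization of cone edges in $\hat T$: they are precisely the edges with nontrivial stabilizer that are incident to a non-abelian vertex (indeed, an original edge of $T$ with nontrivial stabilizer is reattached only to satellite, hence abelian, vertices). Any $G$-equivariant self-isomorphism of $\hat T$ therefore permutes cone edges and descends along the canonical collapse $\hat T\to T$ to a $G$-equivariant self-isomorphism of $T$, giving $\Out(\hat T)\inc\Out(T)$. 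The main obstacle is the identification $\Stab_G(Y)=A_Y$, which combines uniqueness of $A_Y$ with its malnormality (both furnished by CSA); everything else is bookkeeping based on the trichotomy on the number of cylinders at a vertex.
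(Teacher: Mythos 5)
Your proposal is correct and follows essentially the same route as the paper's proof: compute the stabilizers of the new vertices and cone edges as $G_x\cap G_Y$ with $G_Y$ the maximal abelian subgroup attached to the cylinder, deduce abelianness, maximality in $G_v$ and the uniqueness of the edge via malnormality/CSA, get domination from vertex stabilizers of $T$ fixing the central vertices, and get $\Out(\hat T)\subset\Out(T)$ from the intrinsic characterization of the new edges (non-trivial stabilizer, one endpoint non-abelian) plus collapsing. The only difference is that you spell out the identification $\Stab_G(Y)=A_Y$, which the paper simply asserts.
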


\begin{proof}
Let $Y$ be a cylinder in $S_x$  (viewed as a subtree of $T$). 
 The setwise stabilizer $G_Y$ of $Y$ is the maximal abelian subgroup of $G$ containing    stabilizers of   edges of $Y$. The stabilizer of the vertex $(x,s_Y)$ of $\hat T$, and also of the edge between $(x,s_Y)$ and $x$,   is $G_x\cap G_{Y}$; it is non-trivial (it contains the stabilizer of edges of $Y$  incident to $x$), and is a maximal abelian subgroup of $G_x$. This proves that  edge stabilizers of $\hat T$  are abelian, since the other edges have the same stabilizer as in $T$. 
 
 Every vertex stabilizer of $T$ is also a vertex stabilizer  of $\hat T$, so $T$ dominates $\hat T$. Edges of $\hat T$ which are not edges of $ T$  (those between $(x,s_Y)$ and $x$) are characterized as those having non-trivial stabilizer and having an endpoint $v$ with $G_v$ non-abelian. One recovers $T$ from $\hat T$ by collapsing these edges, so $\Out(\hat T)\inc\Out(T)$.
 
  Consider two edges $e_1$ and $e_2$ incident to $v$ in  $\Hat T$, with the same non-trivial stabilizer. They  join $v$ to vertices $(v,s_{Y_i})$, and we have seen that $G_{e_1}=G_{e_2}$ is maximal abelian in $G_v$. The groups $G_{Y_1}$ and $G_{Y_2}$ are equal because they both contain $G_{e_1}=G_{e_2}$. Edges of $Y_i$ have stabilizers contained in $G_{Y_i}$, so have commuting stabilizers. Thus $Y_1=Y_2$, so $e_1=e_2$.
\end{proof}

\begin{rem}\label{nonconj}
If $G_{e_1}$, $G_{e_2}$  are conjugate in $G_v$, rather than equal,   we conclude that  $e_1$ and $e_2$ belong to the same $G_v$-orbit.  On the other hand, edges belonging to different $G_v$-orbits may have stabilizers which are conjugate in $G$ (but not in $G_v$). 
\end{rem}

\subsubsection {The action on edge groups}
\label{sec_edge}

 In Subsection \ref{cyc} we could neglect the action of $\Out^0(T)$ on edge groups because all groups $\Out(G_e)$ were finite. We now allow edge stabilizers of arbitrary rank, so we must take these actions into account.
  We denote $\Out^0(T,\calk)=\Out^0(T)\cap\Out(G;\calk)$.

  Recall that, for each edge $e$ of $\Gamma=T/G$,
there is a natural map $\rho_e:\Out^0(T)\ra \Out(G_e)$ (see Section \ref{automs}).
The collection of all these maps defines a map 
$$\psi:\Out^0(T,\calk)\to\prod_{e\in E}\Out(G_e),$$ 
  the product being over all non-oriented edges of $\Gamma$. 
 We denote by $Q$ the image of $\Out^0(T,\calk)$ by 
 $\psi$, so that we have the exact sequence
$$1\ra \ker\psi\ra \Out^0(T,\calk)\ra Q\ra 1.$$
 
\begin{lem} \label{pt2}
 If $T$ satisfies the second assertion of Lemma \ref{norm}, 
  then the group $Q$ is  of type VF.
\end{lem}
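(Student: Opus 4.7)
Each edge stabilizer $G_e$ is a finitely generated torsion-free abelian group, so $\Aut(G_e) \cong GL(n_e, \Z)$ and $Q$ sits naturally inside the ambient arithmetic group $\prod_{e \in E} GL(n_e, \Z)$. My strategy is to show that $Q$ is commensurable with an arithmetic subgroup of this product and then invoke Borel--Serre (as in the proof of Lemma \ref{arithm}) to conclude type VF. The key is to route everything through the maximal abelian subgroups of $G$ containing the edge groups. For each edge $e$ with $G_e$ non-trivial, the CSA property gives a unique maximal abelian subgroup $A_e \supset G_e$, which is malnormal; pick representatives $A_1, \dots, A_s$ for the $G$-conjugacy classes of these $A_e$, and for each $e$ fix $g_e \in G$ with $A_e = g_e A_{j(e)} g_e\m$, setting $E_{j,e} := g_e\m G_e g_e \subset A_{j(e)}$. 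Any $\Phi \in \Out^0(T, \calk)$ preserves the conjugacy class of each $G_e$, hence of each $A_j$, and since $N(A_j) = A_j$ there is a canonical homomorphism
$$\Theta \colon \Out^0(T, \calk) \longrightarrow \prod_j \Aut(A_j).$$
Because subgroups of a malnormal abelian $A_j$ that are $G$-conjugate are equal, each $\Theta(\Phi)$ preserves every $E_{j,e}$, and $\psi$ factors as $\Theta$ composed with the natural restriction map $\prod_j \Aut(A_j) \to \prod_e \Aut(G_e)$ that restricts $\alpha_{j(e)}$ to $E_{j,e}$ and reads the result through $g_e$-conjugation.

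Next I identify an arithmetic target inside $\prod_j \Aut(A_j)$. Applying Proposition \ref{image} with $\calh = \emptyset$ produces a finite-index subgroup $\Out'(G; \calk) \subset \Out(G; \calk)$ whose image in $\prod_j \Aut(A_j)$ is $\prod_j \Aut(A_j; \mk{\{F_j\}}, \calk_{|A_j})$ for certain $F_j \subset A_j$. Passing to the finite-index subgroup $\Out^0(T, \calk) \cap \Out'(G; \calk)$ of $\Out^0(T, \calk)$ and incorporating the extra constraints of preserving each $E_{j,e}$, the image under $\Theta$ is contained in
$$\Gamma := \prod_j \Aut\bigl(A_j;\ \mk{\{F_j\}},\ \calk_{|A_j},\ \{E_{j,e}\}_{j(e)=j}\bigr),$$
a direct product of McCool groups of abelian groups. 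By Lemma \ref{arithm} each factor is arithmetic, so $\Gamma$ is arithmetic and of type VF.

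The crux will be showing that this containment is of finite index. Given $(\alpha_j) \in \Gamma$, Proposition \ref{image} furnishes some $\Phi \in \Out'(G; \calk)$ with $\Theta(\Phi) = (\alpha_j)$, and preservation of each $E_{j,e}$ forces $\Phi$ to preserve the conjugacy class of every $G_e$ in $G$. Promoting this to membership in $\Out^0(T)$ will rely on Lemma \ref{modif} applied to compatible vertex-level data for $T$, using the second assertion of Lemma \ref{norm} to eliminate conjugation ambiguities among incident edges at non-abelian vertices (where incident edge groups are maximal abelian in $G_v$ and distinct edges carry distinct edge groups); finite-index corrections may be needed to absorb the finite actions on rigid and QH vertex groups controlled by Lemma \ref{lem_fini} applied to the vertex stabilizers of $T$. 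Once the surjectivity up to finite index is established, $Q$ will be commensurable with the image of $\Gamma$ under the $\Q$-algebraic restriction map, which is itself arithmetic in $\prod_e GL(n_e, \Z)$ by Borel's theorem on images of arithmetic groups under $\Q$-morphisms; Borel--Serre then yields type VF for $Q$. The genuine obstacle is this lifting step, which is exactly the subtlety about non-cyclic edge stabilizers flagged in Remark \ref{subt} and is the reason the cyclic-edge argument of Subsection \ref{cyc} does not apply verbatim.
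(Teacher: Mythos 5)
Your outer strategy (route $\psi$ through the maximal abelian subgroups containing edge groups, identify the image with an arithmetic group, finish with Borel's theorem on images under $\Q$-morphisms and Borel--Serre) has the same shape as the paper's proof, which works with the smallest direct factors $B_j\inc E_j$ containing the edge groups instead of the full $A_j$. But the step you flag as ``the crux'' is a genuine gap, and it fails for a structural reason. Your arithmetic model $\Gamma$ is cut out by the conditions furnished by Proposition \ref{image} applied to $G$ itself (triviality on $F_j$, invariance of $\calk_{|A_j}$ and of the $E_{j,e}$). These are necessary conditions on $\Theta$-images of elements of $\Out^0(T,\calk)\cap\Out'(G;\calk)$, but they are not the right ones, and the containment of the image in $\Gamma$ need not have finite index. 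The element $\Phi\in\Out'(G;\calk)$ realizing a tuple $(\alpha_j)$ preserves the conjugacy class of every $G_e$, but this comes nowhere near forcing $\Phi$ to preserve the particular (non-canonical) tree $T$; and the alternative route through Lemma \ref{modif} requires producing, for each non-abelian vertex group $G_v$ of $T$, an automorphism $\Phi_v\in\Out(G_v;\Inc_v,\calk_{||G_v})$ inducing the prescribed automorphisms of the incident edge groups. Whether such a $\Phi_v$ exists is governed by constraints internal to $G_v$: the prescribed automorphism of a non-cyclic incident edge group must act trivially on a subgroup $F_e^v\inc G_e$ determined by the JSJ structure of $G_v$ (this is Proposition \ref{image} applied to $G_v$ with the families $\Inc_{v,\Z}$ and $\calk_{||G_v}$, using Lemma \ref{norm}(2) to know that non-cyclic incident edge groups are non-conjugate maximal abelian subgroups of $G_v$). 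These constraints are typically of \emph{infinite} index in $\Out(G_e)$; they are not ``finite actions controlled by Lemma \ref{lem_fini}'', which in any case concerns the canonical JSJ tree of $G$, not an arbitrary $T$. So $\Gamma$ is in general strictly larger than a finite-index overgroup of the image, and commensurability with $\Gamma$ cannot be invoked.

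This missing vertex-level input is exactly what the paper's proof supplies: Proposition \ref{image} is applied to each non-abelian vertex stabilizer $G_v$ of $T$, producing the subgroups $F_e^v$, and the image of a suitable finite-index subgroup $\Out^1(T,\calk)$ is then identified as a product $\prod_j Q_j$ whose defining conditions include triviality on every $F_e^v$ (plus triviality on cyclic edge groups, invariance of non-cyclic edge groups and abelian vertex groups in $B_j$, and extendability to $E_j$ preserving $\calk_{|E_j}$); realizability of an arbitrary tuple in $\prod_j Q_j$ is proved via Lemma \ref{modif}. Only after this identification does your intended endgame (block-triangular matrices, Borel's image theorem, Lemma \ref{arithm}/Borel--Serre) go through. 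A minor additional point: Proposition \ref{image} says nothing about cyclic $A_j$, though this is harmless since their automorphism groups are finite; the substantive defect is the absence of the $F_e^v$ conditions.
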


This lemma will be proved in the next subsection. 
We first explain how to deduce  Theorem \ref{mcgg} from it. The first assertion of Lemma \ref{norm} implies that the theorem  holds for $T$ if it holds for $\hat T$, so we may assume that $T$ satisfies the second assertion of  Lemma \ref{norm}.

The kernel of $\psi$ is the group discussed in Subsection \ref {cyc} under the name   $\Out^1(T, \calk)$, but now (contrary to Convention \ref{1}) $\Out^1(T, \calk)$ may   be of infinite index in $\Out(T,\calk)$; indeed, 
 $\Out(T,\calk)$ is virtually an extension of 
$\Out^1(T ,\calk)$ by  $Q$.  To avoid confusion, we use the notation  $\ker \psi$ rather than $\Out^1(T ,\calk)$. 

We proved  in Subsection \ref {cyc} 
that $\ker \psi$ is of type VF, and $Q$ is of type VF by the lemma,   but this is not quite sufficient (see Remark \ref{subt}).
  We  shall  now 
 construct a finite index subgroup   $\Out^2(T,\calk)\subset \Out^0(T,\calk)$
such that $\ker\psi\cap\Out^2(T,\calk)$ has type $F$.   Applying Corollary \ref{cor_extensionVF} to $\Out^0(T,\calk)$ then completes the  proof of Theorem \ref{mcgg}.

We argue as in Subsection \ref{cyc}. Recall from Subsection \ref{automs} 
 the   exact sequence
$$1\to\calt\to\Out^0(T,\calk)\xra{\ \rho\ }\prod_{v\in V}\Out(G_v; \Inc_v, \calk_{||G_v})  $$
whose restriction to $\ker \psi$ is the exact sequence
$$1\to\calt\to\ker \psi\xra{\ \rho\ }\prod_{v\in V}\Out(G_v; \mk\Inc_v, \calk_{||G_v})\ra 1. $$

Using the ``more precise'' statement of Theorem \ref{mcga} 
we get, for each $v\in V$, 
a finite index subgroup
$\Out^1(G_v; \Inc_v, \calk_{||G_v})\inc\Out(G_v; \Inc_v, \calk_{||G_v})$ 
such that 
$$\Out^1(G_v; \Inc_v, \calk_{||G_v})\cap\Out(G_v; \mk\Inc_v, \calk_{||G_v})$$ is of type F. 
Define the finite index subgroup $\Out^2(T,\calk)\subset \Out^0(T,\calk)$
as the preimage of $\prod_{v\in V}\Out^1(G_v; \Inc_v, \calk_{||G_v})$ under $\rho$,
intersected with a torsion-free finite index subgroup of $\Out(G)$. 

Restricting the exact sequence above, 
we get an exact sequence 
$$1\to\calt'\to\ker\psi\cap\Out^2(T,\calk)\xra{\ \rho\ }  L \ra 1$$
where $L$ has finite index in the product of the groups 
$$\Out^1(G_v; \Inc_v, \calk_{||G_v})\cap \Out(G_v; \mk\Inc_v, \calk_{||G_v}) ,$$ hence has type F.
The group  $\calt'$ is a torsion-free subgroup of finite index of $\calt$, so
has type F as in Subsection \ref{cyc}.   We conclude that $\ker\psi\cap\Out^2(T,\calk)$ has type F.
As explained above, this completes the  proof of Theorem \ref{mcgg} (assuming Lemma \ref{pt2}).

\subsubsection {Proof of Lemma \ref{pt2}} \label{finpf}

There remains to prove Lemma \ref{pt2}. We let   $E_j$ be representatives of conjugacy classes of  maximal abelian subgroups containing a non-trivial edge stabilizer. 
Note that    $E_j$ is allowed to  be cyclic, and   maximal abelian subgroups of $G$ 
containing no non-trivial $G_e$ are not included.

Inside each $E_j$ we let $B_j$ be the smallest direct factor containing all edge groups included in $E_j$ (it equals $E_j$ if $E_j$ is cyclic).  It is elliptic in $T$, because it is  an abelian group generated (virtually) by elliptic subgroups. 

  Each automorphism $\Phi\in \Out^0(T,\calk)$ induces an automorphism of $E_j$, which preserves $B_j$ and all the
 edge groups it contains.
This defines a map  
$$\psi':\Out^0(T,\calk)\ra \prod_j \Out(B_j)$$
having the same kernel as the map $\psi:\Out^0(T,\calk)\to\prod_{e\in E}\Out(G_e)$ defined in Subsection \ref{sec_edge}.
Thus, it suffices to  prove that the image of $\Out^0(T,\calk)$ by $\psi'$ is of type VF. We do so by finding a finite index subgroup
$ \Out^1(T,\calk)$  (not the same as in Subsection \ref{cyc}) whose image is a product $\prod_j Q_j$ with each $Q_j$ of type VF.

Consider a non-abelian vertex  group $G_v$. Define $\Inc_{v,\Z}\inc\Inc_v$ by keeping only the incident edge groups which are infinite cyclic,   and denote by $E_{nc}(v)$   the set of edges $e$ of $\Gamma$ with origin $v$ and $G_e$ non-cyclic (if $e$ is a loop, we subdivide it so that it counts twice in $E_{nc}(v)$). By Lemma \ref{norm}  and Remark \ref{nonconj}, the edge groups $G_e$, for $e\in E_{nc}(v)$, are  non-conjugate maximal abelian  subgroups of $G_v$.

We apply Proposition \ref{image},  describing the action on  non-cyclic  maximal abelian subgroups, to $\Out(G_v;\mk\Inc_{v,\Z},\calk_{||G_v}) $.
We   get a subgroup of finite index $\Out'(G_v;\mk\Inc_{v,\Z},\calk_{||G_v})$,
and a subgroup  $F_e^v \subset G_e$ for each edge $e\in  E_{nc}(v)$,
 such that 
 the image of $\Out'(G_v;\mk\Inc_{v,\Z},\calk_{||G_v})$ in $\prod_{e\in E_{nc}(v)}\Out(G_e)$ is
 the product $\prod_{e\in E_{nc}(v)} \Out(G_e;\mk {\{F_e^v\}},\calk_{|G_e})$.

We let $\Out^1(T,\calk)\inc\Out^0(T,\calk)$ be the  subgroup consisting of automorphisms acting trivially on cyclic edge stabilizers,  and acting on non-abelian vertex stabilizers as an element of $\Out'(G_v;\mk\Inc_{v,\Z},\calk_{||G_v}) $. It has  finite index because 
$$\Out(G_v;\mk\Inc_{v,\Z},\calk_{||G_v}) \inc \rho_v(\Out^0(T,\calk))\inc\Out(G_v;\Inc_{v,\Z},\calk_{||G_v} ),$$ with all indices finite.

We now define $Q_j\inc\Out(B_j)$ as consisting of automorphisms $\Phi_j$ such that:

 \begin{enumerate}
\item if $G_e$ is a cyclic edge stabilizer contained in $B_j$, then $\Phi_j$ acts trivially on $G_e$;

\item if $B_j$ contains a non-cyclic $G_e$, and $v$ is an endpoint of $e$  with $G_v$ non-abelian, then $\Phi_j$ acts trivially on $F_e^v$;

\item non-cyclic edge stabilizers, and abelian vertex stabilizers, contained in $B_j$ are $\Phi_j$-invariant;

\item $\Phi_j$ extends to an automorphism  of $E_j$ leaving $\calk_{ | E_j}$ invariant; in particular, subgroups of $B_j$ conjugate to a group of $\calk$ are $\Phi_j$-invariant.

\end{enumerate}

This definition was designed so that the image of  $\Out^1(T,\calk)$ by $\psi'$ is contained in $\prod_j  Q_j $. We claim that equality holds: 

\begin{lem}
The image of  $\Out^1(T,\calk)$ by $\psi'$ equals $\prod_j  Q_j $. 
\end{lem}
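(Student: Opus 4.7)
The inclusion $\psi'(\Out^1(T,\calk)) \subset \prod_j Q_j$ holds by the very design of the conditions defining $Q_j$, so the content of the lemma is the reverse inclusion. Given a tuple $(\Phi_j)_j \in \prod_j Q_j$, my plan is to construct an outer automorphism $\Phi_v \in \Out(G_v; \Inc_v)$ at each vertex $v$ of $\Gamma = T/G$, verify compatibility of the $\Phi_v$'s on each edge, and apply Lemma \ref{modif} to produce a global $\Phi \in \Out^0(T)$ with $\rho_v(\Phi) = \Phi_v$ for all $v$. As a preliminary, I use condition~(4) to lift each $\Phi_j \in \Aut(B_j)$ to some $\Psi_j \in \Aut(E_j)$ preserving $\calk_{|E_j}$. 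Since every non-trivial edge stabilizer $G_e$ lies, up to $G$-conjugacy, in a unique $E_j$, this fixes the target action $\Psi_{j(e)}|_{G_e}$ on each edge group.

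For a non-abelian vertex $v$, Lemma \ref{norm}(2) together with Remark \ref{nonconj} ensures that the non-cyclic incident edge stabilizers $\{G_e\}_{e \in E_{nc}(v)}$ form pairwise $G_v$-non-conjugate maximal non-cyclic abelian subgroups of $G_v$. Conditions~(2) and~(4) of $Q_{j(e)}$ place each $\Psi_{j(e)}|_{G_e}$ inside $\Out(G_e; \mk{\{F_e^v\}}, \calk_{|G_e})$. Proposition \ref{image} applied to $G_v$ with families $\mk\Inc_{v,\Z}$ and $\calk_{||G_v}$ then yields $\Phi_v \in \Out'(G_v; \mk\Inc_{v,\Z}, \calk_{||G_v})$ realizing these prescribed restrictions, with the identity chosen on the remaining $G_v$-conjugacy classes of maximal non-cyclic abelian subgroups of $G_v$. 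In particular $\Phi_v$ acts trivially on cyclic incident edge groups.

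For an abelian vertex $v$, the group $G_v$ is contained, up to $G$-conjugacy, in a unique $E_{j_0}$, and all incident edge stabilizers are subgroups of $G_v \subset E_{j_0}$ with prescribed action $\Psi_{j_0}|_{G_e}$. I set $\Phi_v := \Psi_{j_0}|_{G_v}$, which requires $\Psi_{j_0}(G_v) = G_v$. This is automatic when $G_v \subset B_{j_0}$ by condition~(3) of $Q_{j_0}$. In the remaining case, I exploit the freedom in extending $\Phi_{j_0}$ from the direct factor $B_{j_0}$ to $E_{j_0}$: using that $E_{j_0}$ is a finitely generated free abelian group and that only finitely many abelian vertex stabilizers occur in $E_{j_0}$, one can choose a single $\Psi_{j_0}$ that simultaneously preserves all of them as well as $\calk_{|E_{j_0}}$. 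This simultaneous adjustment is the main technical step of the argument.

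Finally, compatibility on each edge $e = vw$ is immediate by construction: both $\rho_{v,e}(\Phi_v)$ and $\rho_{w,e}(\Phi_w)$ equal the class of $\Psi_{j(e)}|_{G_e}$. Lemma \ref{modif} therefore produces $\Phi \in \Out^0(T)$ inducing all of the $\Phi_v$'s. Membership $\Phi \in \Out^1(T, \calk)$ follows because $\Phi$ acts trivially on cyclic edge stabilizers (guaranteed by the previous two paragraphs), lies in $\Out'(G_v; \mk\Inc_{v,\Z}, \calk_{||G_v})$ on each non-abelian vertex group, and preserves $\calk$ vertex by vertex. Finally $\psi'(\Phi) = (\Phi_j)_j$ since $\psi'(\Phi)|_{B_j} = \Psi_j|_{B_j} = \Phi_j$ for each $j$.
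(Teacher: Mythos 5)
Your overall strategy is the paper's: fix the prescribed actions on edge groups, build vertex automorphisms realizing them (via Proposition \ref{image} at non-abelian vertices, using Lemma \ref{norm} and Remark \ref{nonconj}), check compatibility, and glue with Lemma \ref{modif}; your non-abelian case is handled correctly. The gap is at the abelian vertices, exactly at what you yourself call the main technical step. First, an abelian vertex stabilizer need not lie in any $E_{j_0}$ at all: if all incident edge stabilizers of $v$ are trivial, the maximal abelian subgroup containing $G_v$ may contain no non-trivial edge stabilizer, so ``contained in a unique $E_{j_0}$'' fails; this case is harmless (take $\Phi_v=\id$, which is consistent since the prescribed edge data there is trivial), but your construction simply does not define $\Phi_v$ in it. Second, and more seriously, when $G_v\subset E_{j_0}$ but $G_v\not\subset B_{j_0}$ you need $\Psi_{j_0}(G_v)=G_v$, and you assert without proof that the extension $\Psi_{j_0}$ of $\Phi_{j_0}$ can be chosen to preserve simultaneously all abelian vertex stabilizers inside $E_{j_0}$. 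Condition 4 only guarantees an extension preserving $\calk_{\,|E_{j_0}}$; the fact that $E_{j_0}$ is finitely generated free abelian and that there are finitely many constraints does not make the extension problem solvable --- an automorphism of a direct factor need not extend so as to preserve a prescribed intermediate subgroup --- so as written this step is unproved.

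What saves the argument, and is the point you are missing, is that the problematic case is degenerate. If $G_v$ is abelian and not contained in any $B_j$, then either all incident edge stabilizers are cyclic, in which case the prescribed edge actions are trivial by condition 1 and $\Phi_v=\id$ does the job; or some incident edge stabilizer is non-cyclic, and then $G_v=E_{j_0}$: indeed $G_v$ cannot fix an edge (it would then lie in an edge stabilizer, hence in $B_{j_0}$), so $\Fix(G_v)=\{v\}$, and $E_{j_0}$, which commutes with $G_v$, must preserve $\{v\}$, giving $E_{j_0}\subset G_v$ and hence equality. Thus any extension furnished by condition 4 automatically restricts to an automorphism of $G_v$ preserving $\calk_{\,|G_v}$, and no simultaneous adjustment is needed (the remaining abelian vertices lie in some $B_j$ and are covered by condition 3). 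With this observation inserted in place of your unproved claim, your argument closes up and coincides with the paper's proof; the same observation is also what completes your final check that $\Phi$ leaves every $K\in\calk$ invariant at abelian vertices.
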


\begin{proof}
We     fix automorphisms $\Phi_j\in Q_j\inc\Out(B_j)$, and we have to construct an automorphism  $\Phi\in\Out^1(T,\calk)$. By items 1 and 3 above, the $\Phi_j$'s induce automorphisms  $\Phi_e$ of edge stabilizers
(each non-trivial edge group $G_e$ lies in a unique  $E_j$, so there is no ambiguity in the definition of $\Phi_e$). 
   As explained after Lemma \ref{modif}, it suffices to find automorphisms $\Phi_v$ of vertex groups inducing the $\Phi_e$'s.   
We distinguish several cases.

If $G_v$ is contained in some $B_j$ (up to conjugacy), it is $\Phi_j$-invariant by item 3, so we let $\Phi_v$ be the restriction. 

If $G_v$ is abelian, but not contained in any $B_j$, we may assume that some incident $G_e$ is   non-cyclic (otherwise we let $\Phi_v$ be the identity). 
This $G_e$ is contained in some $B_j$, and $G_v\inc E_j$. In fact $G_v=E_j$: since   $G_v$ is not contained in $ B_j$, 
 it  fixes only $v$, and $E_j$ fixes $v$ because it commutes with $G_v$. We may thus  extend $\Phi_j$ to $G_v$ using item 4.

If $G_v$ is not abelian, 
we construct $\Phi_v$ in $\Out'(G_v;\mk\Inc_{v,\Z},\calk_{||G_v} ) $ as follows.
If  $e\in E_{nc}(v)$, 
the automorphism $\Phi_e$ acts trivially on $F_e^v$ by item 2, and preserves $\calk_{|G_e}$ by item 4. 
Thus, the collection of automorphisms $\Phi_e$ 
lies in 
$\prod_{e\in E_{nc}(v)} \Out(G_e;\mk {\{F_e^v\}},\calk_{|G_e})$.
    Proposition \ref{image} guarantees that   
  $\Out'(G_v;\mk\Inc_{v,\Z},\calk_{||G_v} ) $ 
  contains an automorphism $\Phi_v  $
inducing $\Phi_e$ for all $e\in E_{nc}(v)$ (and acting trivially on all cyclic incident edge groups).

We have now constructed automorphisms $\Phi_v\in\Out(G_v)$ inducing the $\Phi_e$'s, so Lemma \ref{modif} provides an automorphism $\Phi\in\Out^0(T)$ whose image in $\prod_j\Out(B_j)$ is the product of the $\Phi_j$'s because $B_j$ is virtually generated by edge stabilizers. 
We show $\Phi\in\Out^1(T,\calk)$. By construction it acts trivially on cyclic edge groups and acts on non-abelian vertex stabilizers as an element of $\Out'(G_v;\mk\Inc_{v,\Z},\calk_{||G_v}) $. We just have to check that $\Phi$ leaves any $K\in\calk $ invariant.

The group  $K$ is contained in some $G_v$. 
If $K$ is contained in some $B_j$,   it  is $\Phi$-invariant by item 4.
Otherwise, $K$ fixes no edge. 
If $G_v$ is abelian, we have seen that either all incident edge groups are  cyclic  (and $\Phi_v$ is the identity), or $G_v$ equals some $E_j$ and our choice of $\Phi_v$ using item 4 guarantees that $K$ is invariant.  
If $G_v$ is not abelian, then $K$ belongs to  $\calk_{||G_v}$ because it  fixes no edge.
 It is invariant because we chose $\Phi_v\in  \Out'(G_v;\mk\Inc_{v,\Z},\calk_{||G_v}) $.
\end{proof}

We have seen that the group $Q$ of Lemma \ref{pt2} is isomorphic to the image of $\Out^0(T,\calk)$ by $\psi'$, hence contains $\prod_j Q_j$ with finite index. 
To show that $Q$ is of type VF, there remains to show that  each  $Q_j$ is of type VF.

We defined $Q_j$ inside $\Out(B_j)$ by four conditions. As in Lemma \ref{arithm}, the first three define an arithmetic group. 
To deal with the fourth one, we consider the group $\tilde Q_j$ consisting of automorphisms of $E_j$ leaving $B_j$ and $\calk_{ | E_j}$ invariant, with the restriction to $B_j$    satisfying the first three conditions.  This is an arithmetic group. 
 It consists of block-triangular matrices, and one obtains $Q_j$ by considering the upper left blocks of matrices in $\tilde Q_j$. It follows that $K_j$ is arithmetic, as the image of an arithmetic group by a rational homomorphism   \cite[Th.\ 6]{Borel_density},
hence of type VF by Lemma \ref{arithm}.
 
  This completes the proof of Lemma \ref{pt2}, hence of Theorem \ref{mcgg}.

  \section{A finiteness result for trees}
  
  The goal of this section is  Proposition \ref{edstab},  which gives a uniform bound    for the size of certain sets   of relative JSJ decompositions of $G$. 
This an essential ingredient  in the proof of the chain condition for McCool groups.  
We will have to restrict  to root-closed (RC) trees, which are introduced in Definitions \ref{rct} and \ref{rcj}
  (they are closely related to the  primary splittings of  \cite{DaGr_isomorphism}).

  \begin{dfn} \label{rootcl} Let $H $ be a  subgroup of  a  group 
 $  G$. Its \emph{root closure} $e(H,G)$, or simply $e(H)$, is  the set of elements of $G$ having a power in $H$.  If $e(H)=H$, we say that $H$ is \emph{root-closed}.
   \end{dfn}
   
  If $G$ is toral relatively hyperbolic and $H$ is abelian, $e(H)$   is a direct factor of the maximal abelian subgroup containing $H$, and $H$ has finite index in $e(H)$.
  Also note that, given $h\in G$ and $n\ge2$, there exists at most one element $g$ such that $g^n=h$.
  
  The following fact is completely general.
  
  \begin{lem} \label{ev}
  Let $T$ be a tree with an action of an arbitrary group. The following are equivalent:
  \begin{itemize}
  \item Vertex stabilizers of $T$ are root-closed.
  \item Edge stabilizers of $T$ are root-closed.
  \end{itemize}
 \end{lem}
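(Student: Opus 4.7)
The plan is to prove the two implications separately, using only elementary properties of actions on simplicial trees.

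For the easier direction ($\Rightarrow$), assume vertex stabilizers of $T$ are root-closed and let $e$ be an edge with endpoints $v,w$. If $g^n \in G_e$ for some $n \ge 1$, then $g^n$ fixes both $v$ and $w$, i.e.\ $g^n \in G_v \cap G_w$. Root-closure of $G_v$ and $G_w$ gives $g \in G_v \cap G_w$, so $g$ fixes the geodesic $[v,w]$, which is exactly the edge $e$ (here I use that the action is without inversions, consistent with the convention set in Subsection \ref{tre}). Hence $g \in G_e$.

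For the converse ($\Leftarrow$), assume edge stabilizers are root-closed and let $v$ be a vertex with $g^n \in G_v$. The key preliminary step is to show that $g$ itself is elliptic: if $g$ were hyperbolic with translation length $\ell > 0$, then $g^n$ would be hyperbolic with translation length $n\ell > 0$, contradicting that $g^n$ fixes~$v$. So the fixed-point set $\Fix(g)$ is a non-empty subtree of $T$. Let $w$ be the vertex of $\Fix(g)$ closest to $v$, and suppose for contradiction that $w \neq v$. Let $e$ be the first edge on the geodesic from $w$ to $v$. Since $g^n$ fixes both $v$ and $w$, it fixes the entire geodesic $[w,v]$, so $g^n \in G_e$. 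By root-closure of $G_e$, we get $g \in G_e$, so $g$ fixes the endpoint of $e$ distinct from $w$, contradicting the minimality in the choice of $w$. Thus $w=v$ and $g \in G_v$.

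I expect no serious obstacle here: the lemma is essentially a formal consequence of the fact that $\Fix(g)$ is a subtree, together with the observation that a power of an elliptic element on a simplicial tree is elliptic. The only subtlety is the absence-of-inversions convention, which is implicit throughout the paper and is in fact forced by the root-closure hypothesis on edge stabilizers (an inversion of $e$ by $g$ would give $g^2 \in G_e$ with $g \notin G_e$). No additional lemmas from the paper are needed.
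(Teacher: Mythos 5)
Your proof is correct and follows essentially the same route as the paper: for one direction, a power fixing an edge fixes both endpoints and root-closure of vertex stabilizers finishes it; for the other, one notes $g$ is elliptic because $g^n$ is, and then root-closure of the stabilizers of the edges on the segment joining $v$ to a fixed point of $g$ forces $g$ to fix $v$. Your "closest vertex of $\Fix(g)$" formulation is just a slightly more formal phrasing of the same argument.
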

 
\begin{proof}  If $g^n$ fixes an edge $e=vw$, it fixes $v$ and $w$. If vertex stabilizers are root-closed, $g$ fixes $v$ and $w$, hence fixes $e$, so edge stabilizers  are root-closed.

Conversely, if $g^n$ fixes a vertex $v$, then $g$ is elliptic hence fixes a vertex $w$. Edges between $v$ and $w$ (if any) are fixed by $g^n$, hence by $g$ if edge stabilizers are root-closed. Thus  $g$ fixes $v$.
 \end{proof}
 
We now go back to  a toral relatively hyperbolic group $G$.  
  \begin{dfn} \label{rct}
  A tree $T$ is an   \emph{$\RC$-tree} if:
 \begin{itemize}
  \item    all non-cyclic abelian subgroups fix a point in $T$;

  \item edge stabilizers of $T$ are abelian and root-closed.

    \end{itemize}
  \end{dfn}
  
  When $G$ is hyperbolic, $\RC$-trees are the $\Zmax$-trees of \cite{DG2}: non-trivial edge stabilizers are maximal cyclic subgroups.

  \begin{lem} \label{ecl} 
  \begin{enumerate}
  \item Let $T$ be an $\RC$-tree with all edge stabilizers non-trivial. Its 
tree of cylinders $T_c$ (see Subsection \ref{jsj}) 
is an $\RC$-tree belonging to the same deformation space as $T$.
  \item
  If $T_1$ and $T_2$ are $\RC$-trees relative to some family $\calh$, and edge stabilizers of $T_1$ are elliptic in $T_2$, there is 
  an $\RC$-tree $\hat T_1$ relative to   $\calh$ which refines $T_1$ and dominates $T_2$. Moreover, the stabilizer of any edge of $\hat T_1$ fixes an edge in $T_1$ or in $T_2$.
    \end{enumerate}
  \end{lem}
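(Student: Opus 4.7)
I would use the explicit construction of the tree of cylinders from Subsection \ref{jsj}. Since $T$ has non-trivial abelian edge stabilizers and all non-cyclic abelian subgroups are elliptic (by the $\RC$ hypothesis), $T_c$ is defined and lies in the same deformation space $\cald$ as $T$. Membership in $\cald$ already ensures that $T_c$ shares the elliptic subgroups of $T$ (hence every non-cyclic abelian subgroup fixes a point in $T_c$), and by construction its edge stabilizers are abelian. All that remains is root-closedness of edge stabilizers. An edge $e=v_0v_1$ of $T_c$ with $v_i\in\V_i$ has stabilizer $G_{v_0}\cap G_{v_1}$, where $G_{v_0}$ is a non-abelian vertex stabilizer of $T$ and $G_{v_1}$ is a maximal abelian subgroup $M$ of $G$. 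Lemma \ref{ev} applied to $T$ gives that $G_{v_0}$ is root-closed in $G$. Every maximal abelian subgroup of a toral relatively hyperbolic group is root-closed, because an $n$-th root of a non-trivial element $m\in M$ lies in the centralizer of $m$, which is abelian and thus contained in $M$ by maximality. Since the intersection of two root-closed subgroups is obviously root-closed, part (1) follows.

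\textbf{Plan for part (2).} I would construct $\hat T_1$ by a standard blow-up. For each $G$-orbit of vertices $v$ of $T_1$, let $Y_v\inc T_2$ be the $G_v$-minimal invariant subtree if $G_v$ is not elliptic in $T_2$, and a single $G_v$-fixed point otherwise. Replace each $v$ by $Y_v$. For each edge $e$ of $T_1$ incident to $v$, the group $G_e\inc G_v$ is elliptic in $T_2$ by hypothesis and preserves $Y_v$, so it fixes a point of $Y_v$; attach $e$ to such a point, making the choices $G$-equivariantly. Replacing by the minimal subtree if necessary produces $\hat T_1$, which refines $T_1$ (collapse each $Y_v$) and dominates $T_2$ (send $Y_v$ identically into $T_2$, and each old edge of $T_1$ to the geodesic between the images of its endpoints).

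Every edge of $\hat T_1$ is either an old edge of $T_1$ (with the same stabilizer) or an edge internal to some $Y_v$, hence an edge of $T_2$; this is exactly the ``moreover'' clause, and it immediately shows that edge stabilizers of $\hat T_1$ are abelian and root-closed, using that both $T_1$ and $T_2$ are $\RC$-trees. For relativity to $\calh$: any $H\in\calh$ fixes a vertex $v$ of $T_1$, so $H\inc G_v$; being elliptic in $T_2$ and acting on the invariant subtree $Y_v$, $H$ fixes a point of $Y_v\inc\hat T_1$. The same argument shows every non-cyclic abelian subgroup is elliptic in $\hat T_1$, so $\hat T_1$ is an $\RC$-tree.

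\textbf{Main obstacles.} The heart of part (1) is the observation that the edge stabilizer in $T_c$ is an intersection of two root-closed subgroups, one coming from the $\RC$ property of $T$ via Lemma \ref{ev}, the other from maximality. Part (2) is a classical blow-up, and the only real delicacies are arranging the attaching points $G$-equivariantly and passing to the minimal subtree; once this is done, both the $\RC$ property and the ``moreover'' clause follow at once from the fact that each edge of $\hat T_1$ originates either in $T_1$ or in $T_2$.
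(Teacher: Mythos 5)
Your proof is correct and follows essentially the same route as the paper: part (1) rests on the structure of the tree of cylinders (same deformation space; stabilizers of $T_c$ built from root-closed vertex stabilizers of $T$ and maximal abelian subgroups, the latter root-closed by CSA/maximality), and part (2) is the same blow-up refinement with the same identification of where edge stabilizers come from. One point worth making explicit: an edge of $\hat T_1$ lying inside a copy of $Y_v$ has $\hat T_1$-stabilizer $G_v\cap G_f$, the intersection of a vertex stabilizer of $T_1$ with an edge stabilizer $G_f$ of $T_2$, not $G_f$ itself; so root-closedness there uses both that $G_f$ is root-closed ($T_2$ is RC) and that $G_v$ is root-closed (Lemma \ref{ev} applied to $T_1$), combined with your remark that intersections of root-closed subgroups are root-closed -- which is exactly how the paper argues.
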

  
  \begin{proof} 
Non-triviality of  edge stabilizers ensures that $T_c$ is defined.
  The vertex stabilizers of $T_c$ are vertex stabilizers of $T$ or maximal abelian subgroups, so are root-closed. The deformation space does not change because $T$ is relative to non-cyclic abelian subgroups (see \cite{GL4}, prop.\  6.3). This proves 1.
  
   We define  a refinement $\hat T_1$ of $T_1$ dominating $T_2$ as in Lemma 3.2 of \cite{GL3a},  by blowing up each vertex $v$ of $T_1$ into  a $G_v$-invariant subtree of $T_2$. We just have to check that its edge stabilizers are root-closed. 
  As in the proof of Lemma 4.9 of \cite{DG2}, an edge stabilizer of $\hat T_1$ is an edge stabilizer of $T_1$ or is the intersection of a vertex stabilizer of $T_1$ with an edge stabilizer of $T_2$, so is  root-closed.
  \end{proof}

   \begin{prop} \label{access}
  Let $G$ be toral relatively hyperbolic. 
  In each of the following two cases, there is a   bound for the number of orbits of edges of a minimal   tree $T$ with abelian edge stabilizers:
  \begin{enumerate}
  \item $T$ is bipartite: each edge has exactly one endpoint with abelian stabilizer (redundant vertices are allowed); 
  \item $T$ is an $\RC$-tree with no redundant vertex.
  \end{enumerate}
  \end{prop}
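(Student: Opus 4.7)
Both bounds are accessibility statements. My main tool is Bestvina--Feighn accessibility: for a finitely presented group $G$, there is a constant $N=N(G)$ bounding the number of orbits of edges in any reduced minimal $G$-tree with slender edge stabilizers. This applies here because $G$, being toral relatively hyperbolic, is finitely presented, and abelian subgroups are slender.

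For case 1, I would first pass from $T$ to a reduction $T^{\mathrm{red}}$ by successively collapsing each edge $e=vw$ with $G_e=G_v$; Bestvina--Feighn then bounds the number of orbits of edges of $T^{\mathrm{red}}$ by $N(G)$. The remaining task is to bound the number of ``extra'' orbits of edges of $T$ relative to $T^{\mathrm{red}}$, coming from redundant vertices that were identified in the reduction. By the bipartite hypothesis, any redundant vertex lies on the abelian side (a non-abelian endpoint of an edge cannot have its stabilizer coincide with the abelian edge stabilizer). A chain of successive subdivisions along a single edge of $T^{\mathrm{red}}$ corresponds to a chain of nested abelian subgroups of $G$, and the length of such a chain is bounded by the rank of the maximal abelian subgroups of $G$, together with the finiteness of the number of conjugacy classes of non-cyclic maximal abelian subgroups.

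For case 2, when some edge stabilizers of $T$ are trivial I would first invoke Grushko-type accessibility to bound the number of orbits of such edges via the relative Grushko decomposition. When all edge stabilizers are non-trivial, I would use the tree of cylinders $T_c$ of Lemma \ref{ecl}: $T_c$ is bipartite with maximal-abelian stabilizers on one side, so case 1 provides a bound on its orbits of edges. Since $T$ and $T_c$ lie in the same deformation space (again by Lemma \ref{ecl}), $T$ is obtained from $T_c$ by a sequence of slide and expansion moves; the no-redundant-vertex hypothesis combined with the $\mathrm{RC}$ condition (edge stabilizers are root-closed, hence maximal inside the corresponding direct factors of the ambient maximal abelian subgroups) prevents these moves from creating unboundedly many new orbits of edges.

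The main obstacle will be the bookkeeping in case 1 for the redundant vertices: one must show that even when they are allowed, the total number of orbits of edges remains uniformly bounded, by carefully controlling chains of nested abelian subgroups of $G$ and using the bipartite hypothesis to ensure that such chains can appear only on the abelian side. In case 2, the corresponding delicate point is quantifying how far $T$ can be from $T_c$ in the deformation space using only the $\mathrm{RC}$ and no-redundancy properties.
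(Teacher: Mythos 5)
Your case~1 is essentially the paper's strategy (collapse to a reduced graph of groups, apply Bestvina--Feighn, bound the loss), but the mechanism you give for bounding the loss is wrong in principle: a chain of nested abelian subgroups of $G$ is \emph{not} bounded by the rank of maximal abelian subgroups --- $\langle a^{2^k}\rangle\subsetneq\langle a^{2^{k-1}}\rangle\subsetneq\dots\subsetneq\langle a\rangle$ is arbitrarily long in rank one --- and case~1 does not assume root-closedness, which is what would rule such chains out. Fortunately rank is not what is needed here: in the bipartite case any vertex $v$ with $G_v=G_e$ for an incident edge is abelian, and two abelian vertices are never adjacent, so the non-reduced vertices are pairwise non-adjacent; collapsing one incident edge at each of them (this is what the paper does) removes at most half of the edges and yields a reduced graph, whence the bound $2N$ with $N$ the Bestvina--Feighn constant. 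So case~1 is repairable, but by this adjacency count, not by a rank count.

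Case~2 contains the genuine gap. Knowing that $T$ lies in the same deformation space as its tree of cylinders $T_c$ (whose edge number is bounded by case~1) gives, by itself, no control on the number of edge orbits of $T$: expansions can increase the edge number arbitrarily while staying in the deformation space, and connectivity of the deformation space by elementary moves comes with no bound on the length of the sequence or on the complexity of intermediate trees. Your sentence ``the no-redundant-vertex hypothesis combined with the $\mathrm{RC}$ condition prevents these moves from creating unboundedly many new orbits of edges'' is precisely the statement to be proved, and no counting argument is offered. The paper avoids the deformation-space route entirely and argues directly on $T$: each non-reduced vertex $v$ has exactly two incident edges with $G_{e_v}\subsetneq G_v=G_{f_v}$, and root-closedness forces $\mathrm{rank}(G_{e_v})<\mathrm{rank}(G_{f_v})$ (a root-closed proper subgroup of a free abelian group has strictly smaller rank); collapsing, among edges incident to non-reduced vertices, those of minimal rank therefore never collapses two edges adjacent at a non-reduced vertex, at least one third of the edges survive each round, and after at most $M$ rounds ($M$ the maximal rank of an abelian subgroup of $G$) the graph is reduced, giving the bound $3^MN$. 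Some argument of this kind, exploiting root-closedness quantitatively, is what your case~2 is missing; also note that the paper's direct argument makes your preliminary Grushko step for trivially-stabilized edges unnecessary, since Bestvina--Feighn applies to abelian (possibly trivial) edge groups once the graph is reduced.
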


Here, and below, the bound has to depend only      on $G$ (it is independent of the trees under consideration).  
  
  Case 1 applies in particular to trees of cylinders.

  \begin{proof} We cannot apply Bestvina-Feighn's accessibility theorem \cite{BF_bounding} directly because $T$ does not have to be reduced in the sense of \cite{BF_bounding}: $\Gamma=T/G$   may have a vertex $v$ of   valence 2 such that an incident 
  edge carries the same group as $v$.  We say that such a $v$ is a non-reduced vertex. The assumptions   
   rule out the possibility that $\Gamma$ contains long segments consisting of   non-reduced vertices   (as in the example on top of page 450 in \cite{BF_bounding}).

If $T$ is bipartite, consider all non-reduced vertices of $\Gamma$ and collapse  exactly one of the incident edges. This yields a reduced graph of groups, and at most half of the edges of $\Gamma$ are collapsed, so \cite{BF_bounding} gives a bound.

If $T$ is an $\RC$-tree with no redundant vertex, every non-reduced vertex $v$ of $\Gamma=T/G$ has exactly two adjacent edges $e_v$ and $f_v$, whose groups satisfy $G_{e_v}\incs G_v=G_{f_v}$. 
  Among all edges incident to a non-reduced vertex, consider the set  $E_m$ consisting of those with $G_e$ of minimal rank.
No two edges of $E_m$ are adjacent at a non-reduced vertex, because $T$ is an $\RC$-tree. Now collapse the edges in $E_m$. 

  If $I=e_1\cup e_2\cup \dots \cup e_k$ is a  maximal segment in the complement of the set of vertices of $\Gamma$ having degree 3 or carrying a non-abelian  group, we never collapse adjacent edges $e_i,e_{i+1}$ (and we do not collapse $e_1$ if $k=1$; we may collapse $e_1$ and $e_3$ if $k=3$). 
 It follows that 
at least one third of the edges of $\Gamma$ remain after the   collapse.

Repeat the process. 
Denote by $M$ the maximal rank of abelian subgroups of $G$. 
After at most $M$    
  steps one obtains a graph of groups which is reduced in the sense of \cite{BF_bounding}, hence has at most $N$ edges for some fixed $N$. The number of edges of $\Gamma$ is bounded by $3^MN$. 
  \end{proof}

  \begin{prop} \label{dom2}
  Given a   toral relatively hyperbolic group $G$, 
   there exists a number $M$ such  that, if $T_1\to T_2\to\dots\to T_p$ is a sequence of maps between $\RC$-trees  
   belonging to distinct deformation spaces,  then $p\le M$.
  \end{prop}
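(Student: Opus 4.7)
The desired bound $M$ will be extracted from the accessibility bound of Proposition~\ref{access}(2): there exists $N = N(G)$ such that every minimal $\RC$-tree without redundant vertex has at most $N$ orbits of edges. The strategy is to replace the given chain $T_1 \to T_2 \to \dots \to T_p$ of arbitrary domination maps by a chain of honest collapse maps between $\RC$-trees in the same deformation spaces, after which the bound will be immediate: each collapse between distinct deformation spaces must collapse at least one orbit of edges, so the first tree carries at least $p-1$ orbits, and accessibility does the rest.

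We construct, by descending induction on $i$, an $\RC$-tree $T'_i$ in the deformation space $\cald_i$ of $T_i$ such that $T'_i$ refines $T'_{i+1}$. Set $T'_p := T_p$, and suppose $T'_{i+1}$ has been built. Pick any $S \in \cald_i$. Because $\cald_i$ dominates $\cald_{i+1}$, all vertex stabilizers (and a fortiori all edge stabilizers) of $S$ are elliptic in $T'_{i+1}$. Lemma~\ref{ecl}(2) applied with $T_1 := S$ and $T_2 := T'_{i+1}$ then furnishes an $\RC$-tree $\hat S$ refining $S$, dominating $T'_{i+1}$, and such that the stabilizer of each edge of $\hat S$ fixes an edge of $S$ or of $T'_{i+1}$. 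Collapsing in $\hat S$ exactly those orbits of edges whose stabilizer does not fix an edge of $T'_{i+1}$ produces a tree $T'_i$ that comes equipped with a collapse map onto $T'_{i+1}$; its edge stabilizers are (conjugate to) edge stabilizers of $T'_{i+1}$ and are in particular root-closed, so $T'_i$ is still an $\RC$-tree. A check of elliptic subgroups through the construction shows $T'_i \in \cald_i$.

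With the chain $T'_1 \to T'_2 \to \dots \to T'_p$ of collapse maps in hand, the proof concludes quickly. Since $T'_i$ and $T'_{i+1}$ lie in distinct deformation spaces, the collapse $T'_i \to T'_{i+1}$ must collapse a non-empty set of edge orbits, for otherwise $T'_i = T'_{i+1}$ as $G$-trees. Iterating, $T'_1$ has at least $p - 1$ more orbits of edges than $T_p$; in particular it has at least $p - 1$ orbits of edges. After removing any redundant vertices of $T'_1$ (an operation preserving the deformation space and the $\RC$ property), Proposition~\ref{access}(2) gives $p - 1 \leq N$, whence $p \leq N + 1 =: M$.

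The main obstacle is the construction in the inductive step, specifically verifying that the collapse of $\hat S$ described above genuinely refines $T'_{i+1}$ and still lies in $\cald_i$. Refinement hinges on the ``moreover'' clause of Lemma~\ref{ecl}(2): the edges of $\hat S$ whose stabilizers fix edges of $T'_{i+1}$ are in natural bijection (up to $G$) with the edges of $T'_{i+1}$, so collapsing the remaining edges produces a tree that projects to $T'_{i+1}$ by a collapse. Membership in $\cald_i$ requires tracing elliptic subgroups through the blow-up construction of $\hat S$ (as in Lemma~3.2 of \cite{GL3a}); the point is that the collapsed edges are precisely those that distinguish $\hat S$ from $S$, so collapsing them recovers the ellipticity data of $S$, which defines $\cald_i$.
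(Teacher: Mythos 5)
Your overall strategy — replace the chain of domination maps by a chain of collapse maps and then count edge orbits — does not work, because the object you need in the inductive step need not exist. If a tree $R$ collapses onto $T'_{i+1}$, then every edge stabilizer of $T'_{i+1}$ is equal to the stabilizer of the corresponding uncollapsed edge of $R$, hence is elliptic in $R$; so a tree of $\cald_i$ refining $T'_{i+1}$ can exist only if all edge stabilizers of $T'_{i+1}$ are elliptic in $T_i$. Domination gives no such control ``upstream'', and this already fails for $p=2$: take $G=F_3=\langle a,b,c\rangle$, let $T_1$ be a Grushko tree with vertex stabilizers the conjugates of $\langle a\rangle,\langle b\rangle,\langle c\rangle$, and let $T_2$ be the Bass-Serre tree of $\langle a,b\rangle *_{\langle ab\rangle}\langle ab,c\rangle$. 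Both are $\RC$-trees, $T_1$ dominates $T_2$, the deformation spaces are distinct, but $\langle ab\rangle$ is not elliptic in $T_1$, so no tree in $\cald_1$ collapses onto $T_2$. Your own recipe confirms this: here $\hat S=T_1$ (the vertex groups of $T_1$ are elliptic in $T_2$, so nothing is blown up), all edge stabilizers of $\hat S$ are trivial and fix edges of $T_2$, so nothing is collapsed and $T'_1=T_1$, which does not refine $T_2$. The underlying error is the claimed ``natural bijection'': the moreover clause of Lemma \ref{ecl}(2) only says that each edge stabilizer of $\hat S$ fixes an edge of $S$ or of $T'_{i+1}$; the tree $\hat S$ merely \emph{dominates} $T'_{i+1}$, and domination is strictly weaker than compatibility (admitting a common refinement/collapse). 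For the same reason the membership $T'_i\in\cald_i$ is unjustified: collapsing the chosen edges can create vertex stabilizers that are not elliptic in $T_i$. There is also a secondary problem at the end: Proposition \ref{access}(2) bounds edge orbits only for $\RC$-trees \emph{without redundant vertices}, and removing redundant vertices from $T'_1$ decreases the edge count, so $p-1\le N$ would not follow without additionally controlling the redundant vertices produced by the blow-up/collapse operations (and note that an arbitrary $S\in\cald_i$ need not be an $\RC$-tree; you would have to take $S=T_i$).

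The paper's proof avoids refinements altogether. It first reduces, via collapsing edges with non-trivial stabilizer and Scott's complexity \cite{Gui_actions}, to the case where all edge stabilizers are non-trivial; it then replaces each $T_i$ by its tree of cylinders (Lemma \ref{ecl}(1)) and, using Proposition 4.11 of \cite{GL4}, normalizes the domination maps so that edges go to edges or points. The decisive tool is the lexicographic complexity $(n,-s)$, where $n$ is the number of edge orbits (bounded by Proposition \ref{access}) and $s$ the total rank of edge groups: if no edge is collapsed, two edges $e$ and $ge$ in the same orbit must be identified, and root-closedness of $G_e$ forces $\grp{g,G_e}$ to have strictly larger rank, so the complexity strictly drops at each step. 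If you want to salvage an argument along your lines, you would need this kind of mechanism rather than a refinement chain.
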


  \begin{proof}  
  
  There are two steps.
  
  $\bullet$ The first step is to reduce to the case when no edge stabilizer   is trivial. 
  Consider the tree $\bar T_i$ (possibly a point) obtained from $T_i$ by collapsing all edges with non-trivial stabilizer. A   map $  T_i\to  T_{i+1}$ cannot send an arc with non-trivial stabilizer to the interior of an edge with trivial stabilizer, so $\bar T_i$ dominates $\bar T_{i+1}$. Vertex stabilizers of $\bar T_i$  are free factors, there are finitely many possibilities for their isomorphism type.

  Using Scott's complexity, it is shown in Section 2.2 of \cite{Gui_actions} that the number of times that the deformation space $\cald_i$ of $\bar T_i$ differs from that of  $\bar T_{i+1}$  is   uniformly bounded. We may therefore assume that $\cald=\cald_i$  is independent of $i$. 
  
   Let 
 $H_1,\dots,H_k$ be representatives of   conjugacy classes of non-trivial vertex stabilizers of trees in $\cald$.    
  They 
  are free factors of $G$, hence toral relatively hyperbolic, 
  and $k$ is bounded. 
  
  Consider the action of $H_j$ on its minimal subtree $T^j_i\inc T_i$  (we let $T^j_i$ be any fixed point if the action is trivial). It is an  $\RC$-tree, and   no edge stabilizer is trivial. The deformation space of $T_i$ is completely determined by $\cald
  $ and the deformation spaces $\cald ^j_i$ of the trees $T^j_i$ (viewed as trees with an action of $H_j$). It therefore suffices to bound (by a constant depending only on $H_j$) the 
 number of times that $\cald ^j_i$ changes in a 
  sequence $T_1^j\to T_2^j\to\dots\to T_p^j$, so we may continue the proof under the additional assumption that the $T_i$'s have non-trivial edge stabilizers.

  $\bullet$ Now that edge stabilizers are non-trivial, the tree of cylinders of $T_i$ is defined. By the first assertion of Lemma \ref{ecl},
  we may assume that it equals $T_i$.

  Since all trees are trees of cylinders,  Proposition  4.11 of \cite{GL4}
lets us assume that all    
  domination maps  $  T_i\to  T_{i+1}$   send   vertex to vertex, and   map  an edge to either a point or an edge. Such a map  may collapse an edge to a point, or identify edges belonging to different orbits, or identify edges in the same orbit. The first two phenomena are easy to control since they decrease the number of orbits of edges;     controlling the third one requires more care (and restricting to $\RC$-trees). 

We associate a complexity $(n,-s)$ to each $T_i$, with $n$  the number 
 of edges of $T_i/G$, and $s$ the sum of the ranks of its   edge groups;
complexities are ordered lexicographically. We claim that the complexity of $T_{i+1}$ is strictly smaller than that of $T_i$. 
This  gives the required uniform bound on $p$, since $n$ (hence also   $s$) is   bounded 
by the first case of Proposition \ref{access}.

Let $f_i:T_i\to T_{i+1}$ be a domination map as above.   Complexity clearly cannot increase when passing from $T_i$ to $T_{i+1}$.  If $n$ does not decrease, no edge of $T_i$ is collapsed in $T_{i+1}$. Since $ T_i$ and $T_{i+1}$ belong to distinct deformation spaces, there exist distinct  edges $e,e'$ identified by $f_i$. 
They  have to belong  to the same orbit (otherwise $n$ decreases),
so $e'=ge$ for some $g\in G$. 
 The group $\grp{g,G_e}$ fixes the edge $f_i(e)=f_i(e')$ of $T_{i+1}$, so is abelian. 
It has rank bigger than the rank of $G_e$ because $G_e$ is root-closed and $g\notin G_e$.
Thus  $s$ increases, and   the complexity decreases. 
  \end{proof}

Let $\cala $ be the family of all abelian subgroups. Let $\calh$ be a family of subgroups of $G$.   
A  JSJ tree (over $\cala$) relative to $\calh $ may be defined as a tree $T$ such that
$T$  is relative to $\calh$, edge stabilizers of $T$ are
elliptic in every   tree   which is relative  to $\calh$, and $T$ dominates every tree satisfying the previous conditions  (all trees 
are assumed to have abelian edge stabilizers).
This motivates the following definition, where we require that $T$ be an $\RC$-tree (compare Section 4.4 of \cite{DG2}). Recall that $\hp$ is obtained by adding  
all non-cyclic abelian subgroups to $\calh$.

\begin{dfn} \label {rcj}
  Let $G$ be a toral relatively hyperbolic group, 
and $\calh$   a family of subgroups. 
A tree $T$ is an \emph{$\RC$-JSJ tree relative to $\hp$} if:
\begin{enumerate}
\item $T$ is relative to $\hp$, and is an $\RC$-tree; 
\item edge stabilizers of $T$ are
elliptic in every   tree   with abelian edge stabilizers (not necessarily an $\RC$-tree) which is relative to $\hp$; 
\item $T$ dominates every tree satisfying 1 and 2.
\end{enumerate}
\end{dfn}

We will  construct $\RC$-JSJ trees in Section \ref{pfcc}.  Note that non-cyclic edge stabilizers always satisfy 2.

\begin{prop} \label{edstab}
Let $G$ be a   toral relatively hyperbolic group. 
Let  $\calh_1\inc\dots\inc \calh_i\inc\dots$ be an increasing sequence (finite or infinite) of families of subgroups, with $G$ freely indecomposable relative to $\calh_1$. 
For each $i$, let $U_i$ be  an $\RC$-JSJ tree relative to $\calh_i^{+ab}$. 
There exists a number $q$, depending only on $G$, such that 
the trees $U_i$ belong to at most $q$ distinct deformation spaces. 
\end{prop}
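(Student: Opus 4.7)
The plan is to interpolate a sequence of auxiliary $\RC$-trees $W_i$ between the $U_i$, apply Proposition \ref{dom2} to bound the number of deformation spaces occurring among the $W_i$, and then, within each block where $W_i$ has constant deformation space, bound the number of deformation spaces occurring among the corresponding $U_i$.

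First I would produce for each $i$ an $\RC$-tree $W_i$ that refines $U_i$ and dominates $U_{i+1}$, by applying Lemma \ref{ecl}(2) with $T_1=U_i$, $T_2=U_{i+1}$ and common family $\calh_i^{+ab}\subset\calh_{i+1}^{+ab}$. Both $T_1$ and $T_2$ are $\RC$-trees relative to $\calh_i^{+ab}$, and the edge stabilizers of $U_i$ are elliptic in $U_{i+1}$ because, being edge stabilizers of the $\RC$-JSJ tree relative to $\calh_i^{+ab}$, they are universally elliptic among trees with abelian edge stabilizers relative to $\calh_i^{+ab}$, and $U_{i+1}$ is such a tree.

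Next I would check that $W_{i+1}$ dominates $W_i$: since $W_{i+1}$ refines $U_{i+1}$, its vertex stabilizers are contained in those of $U_{i+1}$, and since $W_i$ dominates $U_{i+1}$ these vertex stabilizers are elliptic in $W_i$; thus every elliptic subgroup of $W_{i+1}$ is elliptic in $W_i$, giving a domination map $W_{i+1}\ra W_i$. Applying Proposition \ref{dom2} to the sequence of $\RC$-trees
\[
W_p\ra W_{p-1}\ra\dots\ra W_1
\]
yields a bound $M=M(G)$ on the number of distinct deformation spaces among the $W_i$.

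To conclude, I would consider a maximal block of indices $i\in J$ on which all $W_i$ lie in a common deformation space $\cald$. Since $W_i$ refines $U_i$, the tree $U_i$ is obtained from $W_i$ by collapsing a $G$-invariant set of edges, hence its deformation space is one of the deformation spaces reachable by collapsing a subset of the edge orbits of a (reduced) representative of $\cald$. By Proposition \ref{access}(2) the number of edge orbits of such a representative is bounded by some $N=N(G)$, so the block contributes at most $2^N$ distinct $U_i$-deformation spaces. Combining gives at most $q=M\cdot 2^N$ deformation spaces, depending only on $G$. The main technical obstacle is the last step: one must justify that the collection of ``collapse deformation spaces'' of $\cald$ depends only on $\cald$ (not on the particular tree chosen) and is uniformly bounded; this should be handled by working with a canonical reduced representative of $\cald$ to which Proposition \ref{access} applies.
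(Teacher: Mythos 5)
There is a genuine gap, and it sits exactly at the point the proposition is difficult. Your construction of $W_i$ via Lemma \ref{ecl}(2) is fine (since $\calh_i\inc\calh_{i+1}$, the tree $U_{i+1}$ is relative to $\calh_i^{+ab}$ and condition 2 of Definition \ref{rcj} for $U_i$ makes its edge stabilizers elliptic in $U_{i+1}$). But the claimed domination $W_{i+1}\ra W_i$ is not justified: you argue that vertex stabilizers of $W_{i+1}$ are contained in vertex stabilizers of $U_{i+1}$ and are ``elliptic in $W_i$ since $W_i$ dominates $U_{i+1}$''. Domination goes the other way: $W_i$ dominates $U_{i+1}$ means that subgroups elliptic in $W_i$ are elliptic in $U_{i+1}$, not that subgroups elliptic in $U_{i+1}$ are elliptic in $W_i$. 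Both $W_i$ and $W_{i+1}$ dominate $U_{i+1}$, and two trees dominating a common tree need not be comparable. Indeed, the absence of any a priori domination relation among the $U_i$ (hence among trees interpolated between consecutive pairs) is precisely why the paper cannot apply Proposition \ref{dom2} directly: $U_{i+1}$ satisfies condition 1 relative to $\calh_i^{+ab}$ but its cyclic edge stabilizers need not satisfy condition 2 relative to the smaller family, so condition 3 for $U_i$ does not apply. Note also that your argument never uses the hypothesis that $G$ is freely indecomposable relative to $\calh_1$, which is essential in the actual proof.

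The paper overcomes this differently: it builds a single nested sequence of refinements, $T_i$ refining $T_{i-1}$ (all $\RC$-trees relative to $\calh_1$) and dominating $U_{i+1}$; by Proposition \ref{access} this sequence stabilizes to one tree $T$ with a uniformly bounded number of edge orbits, and after arranging the collapsed edge set of the maps $f_i:T\to U_i$ to be constant one gets a fixed $\RC$-tree mapping onto every $U_i$ without collapsing edges. Free indecomposability relative to $\calh_1$ and root-closedness then force every cyclic edge stabilizer of $U_{i+1}$ to equal an edge stabilizer of $T$, hence to fix an edge of $U_i$; this upgrades $U_{i+1}$ to satisfy condition 2 relative to $\calh_i^{+ab}$, so $U_i$ dominates $U_{i+1}$ and Proposition \ref{dom2} applies to the $U_i$ themselves. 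Your final step is also problematic on its own: knowing that the $W_i$ in a block lie in one deformation space $\cald$ does not bound the deformation spaces of their collapses, since distinct trees of $\cald$ can be collapsed onto trees lying in infinitely many deformation spaces (for example, Grushko trees of a free group collapse onto one-edge free splittings with infinitely many different elliptic families), and the collapses producing $U_i$ are collapses of the particular trees $W_i$, not of a canonical reduced representative.
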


\begin{proof} 
Let $U_i$ be as in the proposition. 
Note that $U_i$ satisfies condition 1 of Definition \ref{rcj} 
with respect to $\calh_j^{+ab}$ if $j\le i$, and condition 2 with respect to $\calh_j^{+ab}$ if $j\ge i$.  But   cyclic edge stabilizers of $U_i$ do not necessarily satisfy 2 with respect to $\calh_j^{+ab}$ if $j< i$. 

  In general, there is no domination map $U_i\ra U_{i+1}$, so we cannot apply Proposition \ref{dom2} directly.
The easy case is when, for each $i$,  every cyclic edge stabilizer  of $U_{i+1}$ 
is contained in  an edge stabilizer of $U_i$. 
Indeed, this implies that  $U_{i+1}$
satisfies condition 2   with respect  to $\calh_i^{+ab}$ (not just to $\calh_{i+1}^{+ab}$).
By condition 3, 
$U_i$ dominates $U_{i+1}$,
so Proposition \ref{dom2} applies. 

 Next,  assume that there is an $\RC$-tree $T$ relative to $\calh_1$ such that, for all $i$,
there is a domination map $T\ra U_i$ that collapses no edge.
Each  cyclic edge stabilizer $G_e$ of  $U_{i+1} $ contains an edge stabilizer $G_{e'}$ of $T$ (take for $e'$ any edge whose image contains a subarc of $e$). Since $G$ is freely indecomposable relative to $\calh_1$, and $T$ is relative to $\calh_1$, one has $G_{e'}\neq 1$, and  
$G_{e'}=G_e$ because $G_{e'}$ is root closed.
Since the map $T\ra U_{i}$ collapses no edge, $G_e$ fixes an edge in $U_i$,    and we conclude as above.

We now construct such a tree $T$.
By condition 2 of  Definition \ref{rcj}, edge stabilizers of $U_1$ are elliptic in $U_2$, 
so by Lemma \ref{ecl} there is  an $\RC$-tree $T_1$ relative to $\calh_1$ 
which refines $U_1$ and dominates $U_2$; we remove redundant vertices of $T_1$ if needed. 
Edge stabilizers of $T_1$ fix an edge in $U_1$ or  $U_2$, so are elliptic in $U_3$ and one may iterate. One obtains     $\RC$-trees  $T_i$  relative to $\calh_1$ such that $T_i$ refines $T_{i -1}$ and dominates $U_{i+1}$. By Proposition  \ref{access}, all trees $T_i$ for $i$ large enough are equal to a fixed    $\RC$-tree  $T$.
We have no  control over how large $i$ has to be, but we have a uniform bound for the number of orbits of edges of $T$. 

By construction, there are domination maps $f_i:T\to U_i$, but $f_i$ may collapse some  $G$-invariant set of edges.
There are only a bounded number of possibilities for the set $E_i$ of edges of $T$ that are collapsed by $f_i$,
so we  
may assume that $E=E_{i}$ is independent   of $i$. Collapsing all edges of $E$ then gives
a tree $T$ as wanted.
 \end{proof}

   \section{The chain condition}\label{pfcc}
 
 \renewcommand {\calc} {{\mathcal {H}}}

We prove  Theorem \ref{mccc}. In this section we only consider groups of the form $\Out(G;\mk\calh)$, so we use the simpler  notation $\M(\calc )$.
Since we do not yet know that every $\M(\calc )$ is a McCool group, we assume that every $\calc_i$ is a finite set of finitely generated subgroups (this is needed to apply Lemma \ref{lem_fini}).

Since   $\M(\calc')=\M(\calc\cup\calc')$ if $\M(\calc)\supset \M(\calc')$, we may assume $\calc_i\inc\calc_{i+1}$.
 We will use the following procedure several times.  We associate an invariant to each family $\calc_i$, and we show that, as $i$ varies, the number of distinct values  of the invariant is   bounded (by which we mean that there is a bound depending only on $G$). We then continue the proof under the additional asssumption that the value of the invariant is independent  of $i$.
 
 $\bullet$
 The first invariant is the Grushko deformation space $\cald_i$ relative to $\calc_i$  (see Subsection \ref{jsj}). 
 The assumption $\calc_i\inc\calc_{i+1}$ implies that $\cald_i$ dominates $\cald_{i+1}$. 
 As in the proof of Proposition \ref{dom2}, it follows from
 \cite{Gui_actions} that the number of times that $\cald_i$ changes is   bounded. We may therefore assume that $\cald_i$ is constant.

Let $G_1,\dots,G_n$ be the free factors in a Grushko decomposition $G=G_1*\dots*G_n*F_p$ relative to $\calc_i$
(they do not depend on $i$ up to conjugation since $\cald_i$ is constant).
The subgroup of $\M(\calc_i)$ consisting of automorphisms sending each factor $G_j$ to a conjugate has   bounded index,
 and it is determined by the McCool groups
  $\M_{G_j}(\calc_i{}_{ | G_j})$, so we are reduced to the case when $G$ is freely indecomposable relative to $\calc_i$.
 
   $\bullet$ 
  We then consider the canonical   JSJ tree $T_i$ (over abelian subgroups) relative to $\cip$, i.e.\  to $\calc_i$ and all non-cyclic abelian subgroups (see Subsection \ref{jsj}); it is $\M(\calc_i)$-invariant. We cannot use Proposition \ref{edstab} to say that the number of distinct $T_i$'s is bounded, because they are not $\RC$-trees,  so we shall now   replace $T_i$ 
  by an $\RC$-JSJ tree $U_i$.

   Any edge $e$ of $T_i$ joins a vertex ${v_1}$ whose    stabilizer is a maximal abelian subgroup to a vertex ${v_0}$ with non-abelian stabilizer. The group $G_e$ is a maximal abelian subgroup of  $G_{v_0}$, but not necessarily of $G_{v_1}$. Let $\bar G_e$ be the root-closure of $G_e$ in $G_{v_1}$ (hence also in $G$). As in Section 4.3 of \cite{DG2}, we  can fold all  edges in the $\bar G_e$-orbit of $e$ together. Doing this for all edges of $T_i$ yields an $\RC$-tree $U_i$ which is $\M(\calc_i)$-invariant. 
 
 This construction may also be described in terms of graphs of groups, as follows. We now view 
 $e=v_0v_1$ as an edge of $T_i/G$. Subdivide it by adding a midpoint $u$ carrying $\bar G_e$. This creates two edges $v_0u$ and $uv_1$, carrying $G_e$ and $\bar G_e$ respectively. Do this for every edge $e$ of $T_i/G$. Collapsing all edges $uv_1$ yields $T_i/G$, whereas collapsing all edges $v_0u$ yields $U_i/G$.

The quotient graph $U_i/G$ is the same as $T_i/G$, but labels are different. Edge groups are replaced by their root-closure, and  non-abelian vertex groups have gotten bigger  (roots have been adjoined:   each fold replaces some
$G_{v_0}$  by $G_{v_0}*_{G_e}\bar G_e$). Just like $T_i$, the tree $U_i$ is equal to its tree of cylinders because folding only occurs within cylinders; in particular, $U_i$ is determined by its deformation space.

 Note that $U_i$ may have redundant vertices, and is not necessarily minimal (this happens if $T_i/G$ has a terminal vertex carrying an abelian group, and the incident edge group has finite index). In this case we replace $U_i$ by   its minimal subtree. 
 
 We claim that $U_i$ is an  $\RC$-JSJ tree relative to $\calh_i^{+ab}$, in the sense of Definition \ref{rcj}. It satisfies conditions 1 and 2 since its edge stabilizers are finite extensions of edge stabilizers of $T_i$.
 Any tree  satisfying these two conditions is dominated by $T_i$ because $T_i$ is a JSJ tree. But any $\RC$-tree dominated by $T_i$ is also dominated by $U_i$ (with notations as above, $e$ and $ge$ must have the same image if $g\in \bar G_e$).

  $\bullet$
 Proposition \ref{edstab} lets us  assume that $U_i$ is a fixed tree $U$.   It is invariant under every $\M(\calc_i)$.  
 We let $\Out^0(U)$ be  the finite index subgroup of $\Out(U)$ consisting of automorphisms preserving $U$ and acting trivially on  $\Gamma=U/G$.
   The number of edges of $\Gamma$ is uniformly bounded by Proposition \ref{access}, so the index of $\Out^0(U)$ in $\Out(U)$
  is bounded, and  it is enough to prove the chain condition for $\M^0(\calc_i):=\M(\calc_i)\cap\Out^0(U)$. 

Let $V$ be the set of vertices of $\Gamma $. 
As recalled in Subsection \ref{automs}, there are maps $\rho_v:\Out^0(U)\to \Out(G_v)$ and a product   map $\rho:\Out^0(U)\to\prod_{v\in V}\Out(G_v)$. Since $U$ is relative to $\calc_i$, the group of twists $\Tw=\ker\rho$ is contained in $\M^0(\calc_i)$. 

\begin{lem} \label{outun}
There exist subgroups $\Out^1(G_v)\inc \Out(G_v)$, independent of $i $, such that:
\begin{enumerate}
\item $\prod_{v\in V}
\Out^1(G_v)$ is contained in $\rho(\M^0(\calc_i))$ for every $i$;
\item the index of $\Out^1(G_v)$ in $\rho_v(\M^0(\calc_i))$ is uniformly bounded.
\end{enumerate}
\end{lem}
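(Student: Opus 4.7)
The plan is to classify each vertex $v$ of $U$ into one of three types --- non-abelian rigid, QH, or abelian (a maximal abelian subgroup of $G$) --- and define $\Out^1(G_v)$ case by case using only structural data. At a non-abelian rigid vertex set $\Out^1(G_v)=\{1\}$; at a QH vertex with surface $\Sigma_v$ set $\Out^1(G_v)=\calp\calm^+(\Sigma_v)$; at an abelian vertex let $F_v\inc G_v$ be the subgroup generated by the incident edge groups $\Inc_v$ together with representatives of $\bigcup_i(\calc_i)_{||G_v}$, and set $\Out^1(G_v)=\Out(G_v;\mk\{F_v\})$. Because $G_v$ is finitely generated abelian (hence Noetherian), $F_v$ is finitely generated and is already attained at some finite stage of the chain, so $\Out^1(G_v)$ is well defined and independent of $i$. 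The bipartite structure of $U$ --- every edge joins an abelian vertex to a non-abelian one --- will play a crucial role below.

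For (1), given a tuple $(\Phi_v)_v$ with $\Phi_v\in\Out^1(G_v)$, each $\Phi_v$ acts trivially on every incident edge group (QH via fixity of boundary subgroups by $\calp\calm^+$; rigid trivially; abelian since $\Inc_v\inc F_v$), so Lemma \ref{modif} produces $\Phi\in\Out^0(U)$ with $\rho_v(\Phi)=\Phi_v$. To verify $\Phi\in\M(\calc_i)$, I check vertex by vertex that every $H\in\calc_i$, elliptic in $U$ at some vertex $w$, is fixed up to inner automorphism by $\Phi_w$. If $G_w$ is QH, $H$ is contained in a boundary subgroup by the peripheral structure of the JSJ, and uniqueness of roots in toral relatively hyperbolic groups upgrades fixity of a finite-index subgroup to fixity of all of $H$; if $G_w$ is rigid, $\Phi_w$ is trivial in $\Out(G_w)$, so it acts by inner automorphism on $H$; if $G_w$ is abelian, $H$ lies either in $(\calc_i)_{||G_w}\inc F_w$ or in an incident edge group $\inc F_w$, both pointwise fixed.

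For (2), I bound the index by type. For rigid $v$, Lemma \ref{lem_fini} makes $\rho_v(\M^0(\calc_i))$ finite, and it is decreasing in $i$, so $|\rho_v(\M^0(\calc_i))|\le|\rho_v(\M^0(\calc_1))|$, independent of $i$. For QH $v$, $\rho_v(\M^0(\calc_i))$ sits in the finite-index extension of $\calp\calm^+(\Sigma_v)$ by boundary permutations and orientation reversals, a finite invariant of the surface. The main obstacle is the abelian case, where $\rho_v(\M^0(\calc_i))\inc\Out(G_v;\Inc_v,\mk(\calc_i)_{||G_v})$ and this need not be cofinite in $\Out^1(G_v)$. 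The bipartite structure of $U$ saves us: for every edge $e$ incident to $v$, the other endpoint $w$ is non-abelian, and the action of $\Phi\in\M^0(\calc_i)$ on $G_e$ must coincide with its restriction from $G_w$, whose image lies in a uniformly bounded finite set by the rigid and QH estimates. This restricts $\rho_v(\M^0(\calc_i))$, up to a uniformly bounded index, to $\Out(G_v;\mk\Inc_v,\mk(\calc_i)_{||G_v})$; since $G_v$ is abelian, this equals $\Out(G_v;\mk\langle\Inc_v\cup(\calc_i)_{||G_v}\rangle)$, and Noetherianity of $G_v$ gives a uniformly bounded index over $\Out^1(G_v)=\Out(G_v;\mk\{F_v\})$.
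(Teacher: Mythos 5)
Your skeleton (case division by vertex type, extension by the identity via Lemma \ref{modif} for (1), finiteness via Lemma \ref{lem_fini} for (2)) is the same as the paper's, but two of your three cases have genuine gaps. First, you treat the vertices of $U$ as honest rigid/QH/abelian vertices of a canonical JSJ tree. They are not: $U$ is the folded RC-tree, so a non-abelian vertex group is only rigid or QH \emph{with sockets}, $G_v=\pi_1(\Lambda_{i,v})$, where the central vertex group $G_{v_i}$ and, in the QH case, the surface $\Sigma_i$ come from $T_i$ and genuinely depend on $i$. Consequently ``$\Out^1(G_v)=\calp\calm^+(\Sigma_v)$'' is not a definition independent of $i$ (the paper circumvents this by showing that the set $\calu_i(G_v)$ of universally elliptic elements is an increasing union of a bounded number of conjugacy classes of cyclic subgroups, hence may be assumed constant, and defines $\Out^1(G_v)$ from that intrinsic data), and Lemma \ref{lem_fini} does not apply to $v$ as a vertex of $U$: in the rigid case one must apply it to $T_i$ and transfer finiteness to $\Out(G_v)$ through $\Lambda_{i,v}$ (trivial group of twists, action on socket groups determined by the action on edge groups). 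Moreover ``uniformly bounded'' must mean bounded in terms of $G$ alone, since Theorem \ref{mccc} asserts a bound $C(G)$; your bound by the order of $\rho_v(\M^0(\calc_1))$ depends on the chain and is therefore insufficient. The paper gets uniformity from vertex finiteness \cite{GL_vertex} together with virtual torsion-freeness of $\Out(G_v)$.

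Second, the abelian case. Taking $F_v$ generated over \emph{all} $i$ and invoking Noetherianity does not give (2): for an index $i$ at which the root closure $A_i$ of $\langle\Inc_v\cup(\calc_i)_{||G_v}\rangle$ has smaller rank than the root closure of $F_v$, the image $\rho_v(\M^0(\calc_i))$ contains $\Out(G_v;\mk{\{A_i\}})$, in which $\Out(G_v;\mk{\{F_v\}})$ may have \emph{infinite} index (already for $G_v=\Z^2$ with $A_i$ a rank-one direct factor: unipotent matrices fix $A_i$ pointwise but not $F_v$). Noetherianity only says the chain stabilizes eventually, not after a bounded number of steps, and says nothing about the index before stabilization. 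The correct mechanism is the one used throughout Section \ref{pfcc}: $A_i$ is a root-closed direct factor increasing with $i$, hence changes at most $\mathrm{rk}\,G_v$ times, so one may assume it constant along the chain; one then defines $\Out^1(G_v)$ as the pointwise stabilizer of $A_i$, and bounds the index by the finiteness of the image of $\M^0(\calc_i)$ in $\Out(A_i)$ (via Lemma \ref{lem_fini} applied to edges of $T_i$, each incident edge group of $U$ containing an edge group of $T_i$ with finite index) together with the bound on the order of finite subgroups of $GL(k,\Z)$. Your bipartite argument for controlling the action on incident edge groups is in the right spirit, but as written it inherits the non-uniformity of your rigid and QH estimates.
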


This lemma implies Theorem \ref{mccc} because $\M^0(\calc_i)$ 
contains $\rho\m(\prod_{v\in V}
\Out^1(G_v))$ with bounded index.

\begin{proof}[Proof of Lemma \ref{outun}]
 
Let $\calc_{i,v}:=(\calc_{i})_{||G_v}$  be 
the set of (conjugacy classes of) subgroups of $G_v$ which are conjugate to an element of $\calc_i$, and which fix no other point in $T$ (see Subsection \ref{tre}).
Since two such subgroups are conjugate in $G_v$ if and only if they are conjugate in $G$,
we may view $\calc_{i,v}$ as a subset of $\calc_i$.

Since $\rho (\M^0(\calc_i))$ contains $\prod_{v\in V}\M(\Inc_v\cup\calc_{i,v} )$,  as explained in Subsection \ref{automs},
it suffices 
to fix $v\in V$ and to construct $\Out^1(G_v)$, \emph{with $\Out^1(G_v)\inc   \M(\Inc_v\cup\calc_{i,v})$  and the index of $\Out^1(G_v)$ in $\rho_v(\M^0(\calc_i))$   uniformly bounded.} We distinguish several cases.

$\bullet$ First suppose that $G_v\simeq \Z^k$ is abelian, so $\Out(G_v)=\Aut(G_v)=GL(k,\Z)$. 
 Let $A_i$ be  the root-closure of the subgroup of $G_v$ generated by incident edge groups and subgroups  in $\calc_{i,v} $. It is a direct factor and increases with $i$, so we may assume that it is independent of $i$. We define $\Out^1(G_v)\inc \Out(G_v)$ as the subgroup consisting of automorphisms equal to the identity on $A_i$. It is  equal to $\M(\Inc_v\cup\calc_{i,v})$ and contained in $\rho_v(\M^0(\calc_i))$. We must show that the index is bounded.

  The group $A_i$ is invariant under $\rho(\M^0(\calc_i))$, and we have to bound the order of the image of $\M^0(\calc_i)$ in $\Out(A_i)$. Any incident edge group $\bar G_e$ of $G_v$ contains an edge stabilizer $G_e$ of $T_i$ with finite index, and 
 the image of the map $\rho_e: \M^0(\calc_i)\to \Out(G_e)$ is finite  by Lemma \ref{lem_fini}. Since $A_i$ is generated by incident edge groups and  elements which are fixed by $\M^0(\calc_i)$, this implies that the image of $\M^0(\calc_i)$ in $\Out(A_i)$ is finite. Its cardinality is uniformly bounded because there is a bound for the order of finite subgroups of $GL(k,\Z)$, so the index of $\Out^1(G_v)$ in $\rho_v(\M^0(\calc_i))$ is bounded.

$\bullet$ 
We now consider a non-abelian vertex stabilizer $G_v$. 
It follows from the way   $U_i$ was constructed that 
$G_v$ is, for each $i$, the fundamental group of a graph of groups $\Lambda_{i,v}$. This graph      is a tree. It has a   central vertex $v_i$, which may be viewed as a vertex of $
T_i/G$ with $G_{v_i}$ non-abelian. All edges $e$ join $v_i$ to a vertex $u_e$ carrying a root-closed abelian group, and the index of $G_e$ in $G_{u_e}$ is finite. 
The graph of groups $\Lambda_{i,v}$ is invariant under the action of  $ \M^0(\calc_i)$ on $G_v$.

We say that $G_v$ (or $v$) is  \emph{rigid with sockets},  or \emph{QH with sockets}, depending on the type of $v_i$ as a vertex of $T_i$ (since the number of vertices of $T_i/G$ is bounded, we may assume that this type  is independent of $i$).

$\bullet$ If $G_v$ is rigid with sockets, we define 
  $\Out^1(G_v)$ as the trivial group, and we have to explain why $\rho_v(\M^0(\calc_i))$ is a finite group of bounded order.  
Assume first that  $U=T_i$ (i.e.\ $U $ is also a regular JSJ tree).
  Lemma \ref{lem_fini} then implies
that $\rho_v(\M^0(\calc_i))$ is a finite  subgroup of $G_v$, 
 but we  need to bound its order only in terms of $G$ (independently of the sequence $\calc_i$).
To get this uniform bound, we note that there are only finitely many possibilities for $G_{v}$ up to isomorphism
by \cite{GL_vertex}. Moreover  $\Out(G_{v})$ is virtually torsion-free by \cite[Cor 4.5]{GL6}, so there is a bound for the order of its finite subgroups. 

In general (\ie without assuming $U=T_i$),
we study $\rho_v(\M^0(\calc_i))$ through its action on the graph of groups $\Lambda_{i,v}$ as in Subsection \ref{automs} (note that edges are not permuted). 
  The group of twists is trivial because edge groups are maximal abelian in $G_{v_i}$  and   terminal vertex groups are abelian (see Proposition 3.1 of \cite{Lev_automorphisms}), so we only have to control the action of $ \M^0(\calc_i) $ on vertex groups of $\Lambda_{i,v}$. 
  
Applying Lemma \ref{lem_fini} to the JSJ decomposition $T_i$,
we get  finiteness of  the image of $ \M^0(\calc_i )$ in $\Out(G_{v_i})$, 
and in $\Out(G_e)$  for every edge $e$ of $T_i$, and hence of $\Lambda_{i,v}$.
The action of an automorphism on the edge groups of $\Lambda_{i,v}$ determines the action on the abelian vertex groups 
because they contain the incident edge group with finite index. 
This proves that $\rho_v(\M^0(\calc_i))$ is finite, and boundedness follows as above.

$\bullet$ There remains the case when $G_v$ is QH with sockets. The group 
$G_{v_i}$   is then isomorphic to the fundamental group of a compact surface $\Sigma_i$, and incident edge groups are boundary subgroups.  The topology of $\Sigma_i$ may vary with $i$, but the number of boundary components of $\Sigma_i$ is bounded (by a simple   accessibility argument, or because the rank of $G_{v_i}$  as a free group  is bounded  by \cite{GL_vertex}).

 If  $J$ is a subgroup of $G$, denote by $\calu_i(J)$ the set of elements of $J$ that are 
 $\cip$-universally elliptic (\ie elliptic in every   $G$-tree with abelian edge stabilizers which is relative to $\calc_i$ and non-cyclic abelian subgroups). We view it as a union of $J$-conjugacy classes. Since $\calc_i\inc\calc_{i+1}$, we have $\calu_i(J)\inc\calu_{i+1}(J)$. We shall show that the sequence $\calu_i(G_v)$ stabilizes.

 We first study   $\calu_i(G_{v_i})$: we   claim that $\calu_i(G_{v_i})$ is the 
union of the conjugacy classes 
of boundary subgroups of $G_{v_i}=\pi_1(\Sigma_i)$.
Indeed, any boundary subgroup is an incident edge group of $v_i$ (up to conjugacy) 
or 
has 
a finite index subgroup conjugate to a group  in $\calc_i$ 
(otherwise,  $G$ would be freely decomposable relative to $\calc_i$, see Proposition 7.5 of \cite{GL3a}). 
It follows that $\calu_i(G_{v_i})$ contains all boundary subgroups
(incident edge groups are $\cip$-universally elliptic because $T_i$ is a JSJ tree relative to $\cip$).   
Conversely, by Proposition 7.6 of \cite{GL3a}, any $g\in\calu_i(G_{v_i})$ is contained in a boundary subgroup of $\pi_1(\Sigma_i)$.  This proves our claim, and shows in particular that $\calu_i(G_{v_i})$ is the union of a bounded number of conjugacy classes of maximal cyclic subgroups $L_j(i)$ of $G_{v_i}$.

  We now consider $\calu_i(G_{v})$. 
The $\cip$-universally elliptic elements of $G_v$ are contained (up to conjugacy) in $G_{v_i}$ or in one of the  terminal vertex groups of $\Lambda_{i,v}$, so  $\calu_i(G_v)$ is the union of the conjugates of the root-closures (in $G_v$) of the groups $L_j(i)$.  
  Since $\calc_i\inc\calc_{i+1}$, we have $\calu_i(G_v)\inc\calu_{i+1}(G_v)$. As $\calu_i(G_v)$ is the union of the conjugates of a bounded number of cyclic subgroups, 
  we may assume that $\calu_i(G_v)=\calu(G_v)$ does not depend on $i$. 
  
  Elements of $\rho_v(\M^0(\calc_i))$ send each cyclic group in $\calu(G_v)$ to a conjugate (conjugacy classes are not permuted because the action on $T_i/G$ is trivial). They act trivially on 
   groups  in $\calc_{i,v}$, but they may  map an element $g$ belonging to a terminal vertex group of $\Lambda_{v,i}$
   to $g\m$ (geometrically, they correspond  to homeomorphisms of $\Sigma_i$ which may reverse orientation on boundary components). 
  
  We   define $\Out^1(G_v)\inc\Out (G_v)$ as the group of automorphisms  acting trivially on each cyclic group in $\calu(G_v)$ 
  (geometrically, we restrict to  homeomorphisms of $\Sigma_i$   equal to the identity on the boundary).   It is contained in $\M(\Inc_v\cup\calc_{i,v})$, because  $\calu_i(G_v)$ contains the incident edge groups of $G_v$ in $U$, 
hence  contained in  $\rho_v(\M^0(\calc_i))$, and the index is bounded in terms of the number of conjugacy classes of cyclic subgroups in $\calu(G_v)$.
 \end{proof}

\begin{rem} \label{ccab}

Groups of the form $\Out(G;\calh)$, with $\calh$ a finite family of abelian groups, do not satisfy the descending   chain condition: consider $G=\Z^2=\langle x,y\rangle$, and $\calh_i=\{\langle x,y^{2^i}\rangle\}$.
\end{rem}

   \section{Proof of the other results
   } \label{pfcor}
   
 \renewcommand {\calc} {{\mathcal {C}}} 
 
   We first note the following consequence of the chain condition:
   
   \begin{prop} \label{cinf}
   If $\calc$ is an infinite family of conjugacy classes, there exists a finite subfamily $\calc'\inc\calc$ such that $\M(\calc)=\M(\calc')$.
   \end{prop}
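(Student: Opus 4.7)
The plan is to deduce Proposition \ref{cinf} from the uniform chain condition (Theorem \ref{mccc}) by constructing, from any hypothetical counterexample, an arbitrarily long strictly decreasing chain of McCool groups.

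First, I observe that for any finite subfamily $\calc' \subset \calc$, the group $\M(\calc')$ is genuinely a McCool group in the strict sense of Definition \ref{gmc}: the family $\calc'$ corresponds to a finite family of cyclic (hence finitely generated) subgroups, so Theorem \ref{mccc} applies to chains built from such groups. Moreover, $\M(\calc)$ is the intersection $\bigcap_{\calc' \subset \calc \text{ finite}} \M(\calc')$, so the inclusion $\M(\calc) \subset \M(\calc')$ always holds.

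The key observation is the following: if $\calc'$ is a finite subfamily of $\calc$ with $\M(\calc') \neq \M(\calc)$, then there exists $[c] \in \calc$ such that $\M(\calc' \cup \{[c]\}) \subsetneq \M(\calc')$. Indeed, if every $[c] \in \calc$ were fixed by every element of $\M(\calc')$, then $\M(\calc') \subset \M(\calc)$, contradicting the hypothesis; hence some $[c] \in \calc$ is not fixed by some $\Phi \in \M(\calc')$, and this $\Phi$ lies in $\M(\calc') \setminus \M(\calc' \cup \{[c]\})$.

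Now suppose, for contradiction, that no finite subfamily $\calc' \subset \calc$ satisfies $\M(\calc') = \M(\calc)$. Starting with any finite $\calc_0 \subset \calc$ (e.g.\ $\calc_0 = \emptyset$), the observation above lets us iteratively choose classes $[c_1], [c_2], \dots \in \calc$ so that the sets $\calc_n := \calc_0 \cup \{[c_1], \dots, [c_n]\}$ give
\[
\M(\calc_0) \supsetneq \M(\calc_1) \supsetneq \M(\calc_2) \supsetneq \dots
\]
This is an arbitrarily long strictly decreasing chain of (elementary) McCool groups of $G$, contradicting the uniform bound of Theorem \ref{mccc}. Therefore some finite $\calc' \subset \calc$ must satisfy $\M(\calc') = \M(\calc)$, proving the proposition.

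There is no serious obstacle here; the entire content is in Theorem \ref{mccc}. The only thing worth checking is the elementary fact that each $\M(\calc_n)$ qualifies as a McCool group so that the chain condition applies, which is immediate because each $\calc_n$ is a finite set of conjugacy classes, equivalently a finite family of cyclic subgroups.
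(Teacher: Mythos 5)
Your proof is correct and rests on the same ingredient as the paper's: Proposition \ref{cinf} is deduced purely from the uniform chain condition of Theorem \ref{mccc}. The paper argues directly (write $\calc$ as an increasing union of finite subfamilies $\calc_i$ and note that the descending chain $\M(\calc_i)$, whose intersection is $\M(\calc)$, must stabilize), whereas you run the equivalent argument by contradiction; this is only a cosmetic difference.
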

   
    Recall that $\M(\calc)$ is the group of outer automorphisms fixing all conjugacy classes belonging to $\calc$.
   
  \begin{proof} Write $\calc$ as an increasing union of finite families $\calc_i$ and note that $\M(\calc)$ is the intersection of the descending chain $\M(\calc_i)$.
  \end{proof}

To prove  Corollary \ref{genmc}, saying in particular that   every   McCool group
 is an elementary McCool group, 
we need the following fact:
 
 \begin{lem} \label {minos}
 Let $G$ be a toral relatively hyperbolic group. Let $H$ be a subgroup, and  $\alpha\in\Aut(G)$. If $\alpha(h)$ and $h$ are conjugate   in $G$ for every   $h\in H$, then $\alpha$ acts on $H$ as conjugation by some $g\in G$. 
 \end{lem}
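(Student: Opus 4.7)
The argument splits according to whether $H$ is abelian. If $H$ is trivial there is nothing to prove, so assume $H \neq \{1\}$.

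First suppose $H$ is abelian. Then $H$ is contained in a (unique) maximal abelian subgroup $A$ of $G$. Pick any $h_0 \in H \setminus \{1\}$ and a conjugator $g \in G$ with $\alpha(h_0) = g h_0 g^{-1}$. Replacing $\alpha$ by $\operatorname{inn}(g^{-1}) \circ \alpha$, we may assume $\alpha(h_0) = h_0$. Since $\alpha(A)$ is an abelian subgroup containing $h_0 \neq 1$, uniqueness of the maximal abelian subgroup through a non-trivial element forces $\alpha(A) = A$. For each $h \in H \subseteq A$, the element $\alpha(h)$ lies in $A$ and is $G$-conjugate to $h$; malnormality of $A$ (the CSA property) then gives $\alpha(h) = h$. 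Thus the original $\alpha$ acts on $H$ as conjugation by~$g$.

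Now suppose $H$ is non-abelian. Choose $h_1,h_2 \in H$ with $[h_1,h_2] \neq 1$, and let $A_i = Z_G(h_i)$, the unique maximal abelian subgroup containing $h_i$. After an initial inner modification, assume $\alpha(h_1) = h_1$; as above, $\alpha(A_1) = A_1$. Write $\alpha(h_2) = g h_2 g^{-1}$. The core step is to show that $g$ can be chosen in $A_1 \cdot A_2$, for then $g = z_1 z_2$ with $z_i \in A_i$, and further replacing $\alpha$ by $\operatorname{inn}(z_1^{-1}) \circ \alpha$ preserves $\alpha(h_1) = h_1$ (since $z_1 \in A_1$) while giving $\alpha(h_2) = h_2$. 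To establish $g \in A_1 A_2$, apply the hypothesis to $h_1 h_2 \in H$: the element $\alpha(h_1 h_2) = h_1 g h_2 g^{-1}$ must equal $k(h_1 h_2)k^{-1}$ for some $k \in G$. Rewriting this as $(k^{-1} h_1 g)h_2(g^{-1}k) = h_1 h_2$ with the side condition $(k^{-1} h_1 g)(g^{-1}k) = k^{-1} h_1 k$, and using that $h_1 h_2$ has cyclic centralizer (because $h_1,h_2$ do not commute) together with malnormality of $A_1$ and $A_2$, a short CSA-style computation forces $g \in A_1 A_2$.

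Finally, with $\alpha$ fixing both $h_1$ and $h_2$, we show $\alpha(h) = h$ for every $h \in H$. Given $h \in H$, by hypothesis $\alpha(h) = c h c^{-1}$ for some $c \in G$. Apply the hypothesis to $h h_i h^{-1} \in H$ for $i = 1, 2$: since $\alpha(h h_i h^{-1}) = \alpha(h) h_i \alpha(h)^{-1}$ must be conjugate to $h h_i h^{-1}$, a direct computation shows that $c$ may be taken of the form $c = a_i h$ modulo $Z_G(h)$ with $a_i \in A_i$; equivalently $\alpha(h) = a_i h a_i^{-1}$ for some $a_i \in A_i$. Combining the two decompositions yields $a_2^{-1} a_1 \in Z_G(h)$, so in particular $a_1 \in A_1 \cap A_2 \cdot Z_G(h)$. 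If $h \in A_1 \cup A_2$ the conclusion is immediate (then $A_i = Z_G(h)$ for one $i$, so $a_i \in Z_G(h)$); otherwise, $Z_G(h) \cap A_i = \{1\}$ by CSA, and applying the same reasoning to all products $h_1^n h_2^m h \in H$ and using that $A_1 \cap A_2 = \{1\}$ (as $h_1, h_2$ do not commute) forces $a_1 = a_2 = 1$, hence $\alpha(h) = h$.

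The main obstacle is the CSA computation establishing $g \in A_1 A_2$ in the second step (and its analogue in the third step): it is elementary but must carefully exploit malnormality of maximal abelian subgroups of the toral relatively hyperbolic group~$G$ together with the fact that two elements of a maximal abelian subgroup which are $G$-conjugate are already equal.
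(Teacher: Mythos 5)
Your abelian case is fine and is exactly the paper's argument (malnormality of the maximal abelian subgroup containing a fixed non-trivial element). The non-abelian case, however, has a genuine gap at its central point. The ``core step'' asserts that if $h_1\,{}^{g}h_2$ is conjugate to $h_1h_2$ (with $[h_1,h_2]\neq 1$), then a short CSA-style computation forces $g\in Z_G(h_1)Z_G(h_2)$. This implication is false, already in the free group $F_2=\langle a,b\rangle$, which is toral relatively hyperbolic: take $h_1=a$, $h_2=bab^{-1}$, $g=b^{-2}$. Then ${}^{g}h_2=b^{-1}ab$ and $h_1\,{}^{g}h_2=ab^{-1}ab=(ab)^{-1}(abab^{-1})(ab)$ is conjugate to $h_1h_2=abab^{-1}$, yet $b^{-2}\notin \langle a\rangle\langle bab^{-1}\rangle=Z_G(h_1)Z_G(h_2)$. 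So the hypothesis applied to the single element $h_1h_2$ cannot yield $g\in A_1A_2$, and no argument using only malnormality/CSA of maximal abelian subgroups can: the needed statement is precisely the content of Lemma 5.2 of \cite{MiOs_normal} (see also Corollary 7.4 of \cite{AMS_commensurating}), which is what the paper invokes here, and whose proof uses relative (acylindrical) hyperbolicity in an essential way --- one works with elements such as $h_1^nh_2^m$ for large exponents and controls their conjugacy classes by geometric (quasi-geodesic/thin polygon) arguments, not by algebraic manipulation of centralizers.

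Your third step has a further problem: applying the hypothesis to $hh_ih^{-1}$ gives no information, because once $\alpha(h_i)=h_i$ the element $\alpha(hh_ih^{-1})=\alpha(h)h_i\alpha(h)^{-1}$ is \emph{automatically} conjugate to $h_i$, hence to $hh_ih^{-1}$; so the asserted ``direct computation'' producing $\alpha(h)=a_iha_i^{-1}$ with $a_i\in A_i$ has no input to work with (one would have to use products such as $h_ih$ or $h_1^nh_2^mh$, and then one is back to the same difficulty as in the core step). In summary: the reduction ``fix $h_1$, then fix $h_2$, then deduce $\alpha=\mathrm{id}$ on $H$'' is the right skeleton, but both key deductions are unproved, the first is refuted by the example above as stated, and filling them in amounts to reproving the cited Minasyan--Osin result rather than a short CSA computation.
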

 
  \begin{proof}  We may assume that there is a non-trivial $h\in H$ such that $\alpha(h)=h$. If $H$ is 
 abelian, malnormality   of maximal abelian subgroups implies that $\alpha$ is the identity on $H$. If not, the result follows from Lemma 5.2 of  \cite{MiOs_normal} (which is valid for any homomorphism $\varphi:H\to G$, not just automorphisms of $H$), see also Corollary 7.4 of 
\cite{AMS_commensurating}.

  \end{proof}

\begin{cor*}[Corollary \ref{genmc}]
Let $G$ be a toral relatively hyperbolic group. 
If  $\calh $ is any 
family of 
subgroups of $G$, 
there exists a finite set of conjugacy classes 
such that $\M(\calh)=\M(\calc)$. 
\end{cor*}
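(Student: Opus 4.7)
The plan is to bridge between the ``subgroup'' formulation $\M(\calh)$ and the ``conjugacy class'' formulation $\M(\calc)$, then invoke the chain condition to pass from an infinite to a finite family of classes. Let $\calc_\calh$ denote the (possibly infinite) set of $G$-conjugacy classes $[h]$ as $h$ ranges over elements of subgroups in $\calh$. I claim that $\M(\calh)=\M(\calc_\calh)$, after which Proposition \ref{cinf} finishes the job.

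For the equality $\M(\calh)=\M(\calc_\calh)$, one inclusion is immediate from the definition: if $\Phi\in\M(\calh)$ has a representative $\varphi\in\Aut(G)$ acting on each $H_i\in\calh$ as conjugation by some $g_i\in G$, then for every $h\in H_i$ the element $\varphi(h)=g_ihg_i\m$ is conjugate to $h$, so $\Phi$ fixes $[h]$ and therefore $\Phi\in\M(\calc_\calh)$. For the reverse inclusion, fix any $\Phi\in\M(\calc_\calh)$ and any representative $\alpha\in\Aut(G)$. For each $H_i\in\calh$ and each $h\in H_i$, the class $[h]$ belongs to $\calc_\calh$, so $\alpha(h)$ is conjugate to $h$ in $G$. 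Lemma \ref{minos} then produces an element $g_i\in G$ such that $\alpha$ acts on $H_i$ as conjugation by $g_i$. Since this holds for every $i$, we conclude $\Phi\in\M(\calh)$.

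Once this identification is in place, the remaining step is to replace the possibly infinite set $\calc_\calh$ by a finite one. If $\calc_\calh$ is already finite there is nothing to do. Otherwise, write $\calc_\calh=\bigcup_n\calc_n$ as an ascending union of finite subsets; the groups $\M(\calc_n)$ form a descending chain of elementary McCool groups whose intersection is exactly $\M(\calc_\calh)$. By the uniform chain condition (Theorem \ref{mccc}), the chain stabilizes at some $\M(\calc_N)$, so $\M(\calc_\calh)=\M(\calc_N)$; this is precisely Proposition \ref{cinf}. Setting $\calc:=\calc_N$ then gives $\M(\calh)=\M(\calc)$, and the final clauses of the statement follow: every McCool group (the case where $\calh$ is a finite family of finitely generated subgroups) is an elementary McCool group, because the argument above makes no use of finiteness or finite generation assumptions on $\calh$.

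The main substance lies in Lemma \ref{minos}, which is what converts the \emph{a priori} weak condition that $\alpha$ fix each conjugacy class of elements of $H_i$ into the much stronger statement that $\alpha$ acts on $H_i$ by an inner automorphism of $G$; this rests on CSA in the abelian case and on the Minasyan--Osin type result in the non-abelian case, both already in the paper. Everything else is a repackaging of the chain condition.
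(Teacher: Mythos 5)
Your proposal is correct and follows essentially the same route as the paper: define $\calc_\calh$ as the conjugacy classes of elements of the groups in $\calh$, use Lemma \ref{minos} to get $\M(\calh)=\M(\calc_\calh)$, then invoke Proposition \ref{cinf} (i.e.\ the uniform chain condition of Theorem \ref{mccc}) to replace $\calc_\calh$ by a finite subset. The only difference is that you spell out the easy inclusion and the exhaustion argument, which the paper leaves implicit.
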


Recall that  $\M(\calh)$ is also denoted $    \Out(G;\mk\calh)$. We favor the notation  $\M(\calh)$   in   this section.

   \begin{proof}
    Given   an arbitrary family $\calh$, let $\calc_\calh$ be the set of all conjugacy classes having a representative belonging to some $H_i$. By Lemma \ref{minos}, $
    \M(\calh)=\M(\calc_\calh)$. We apply 
Proposition \ref   {cinf} to get $
  \M(\calh)=\M(\calc)$ with $\calc$ finite.
  \end{proof}

Together with Theorem \ref{mcgg},  this  implies our most general finiteness result.
 \begin{cor} \label{thm_general}
 Let $G$ be a toral relatively hyperbolic group.
Let $\calh$ be an arbitrary collection of   subgroups of $G$. 
Let $\calk$ be a  finite 
collection of abelian subgroups of $G$.
 Let $T$ be a simplicial tree on which $G$ acts with abelian edge stabilizers, 
 with each group in $\calh\cup \calk$ fixing a point. 

 Then  the group  $\Out(T,\mk\calh,\calk)=\Out(T)\cap\Out(G;\mk\calh,\calk)$ of automorphisms leaving $T$ invariant,
acting trivially on each group of $\calh$, and sending each $K\in \calk$ to a conjugate (in an arbitrary way),
is of type VF. 
 \end{cor}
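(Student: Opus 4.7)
The plan is to package the corollary by combining Corollary~\ref{genmc} (which rewrites an arbitrary $\M(\calh)$ as an elementary McCool group) with Theorem~\ref{mcgg} (which handles the stabilizer of a tree together with a finite family of abelian elliptic subgroups). All the genuine work has already been done in those two results; only a short bookkeeping argument is needed to bridge them while preserving the ellipticity hypothesis required by Theorem~\ref{mcgg}.

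First, I would apply Corollary~\ref{genmc} to replace the (arbitrary) family $\calh$ by a finite set of conjugacy classes $\calc=\{[c_1],\dots,[c_k]\}$ satisfying $\M(\calh)=\M(\calc)$. The key point to extract from the proof of that corollary is that the representatives $c_j$ may be chosen to lie in groups $H_{i(j)}\in\calh$: indeed, one takes $\calc$ to be a finite subset of the set of conjugacy classes having a representative in some $H_i$, as provided by Lemma~\ref{minos} and Proposition~\ref{cinf}. Since each $H_i$ is assumed to fix a point in $T$, every $c_j$ is elliptic in $T$, and so is the cyclic subgroup $\langle c_j\rangle$. Let $\calh':=\{\langle c_1\rangle,\dots,\langle c_k\rangle\}$; this is a finite family of cyclic (hence abelian) subgroups, each elliptic in $T$.

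Second, I would rewrite the group of interest. Using $\M(\calh)=\M(\calh')$ and the definition of $\Out(G;\mk\calh,\calk)$, we obtain
\[
\Out(T,\mk\calh,\calk)
=\Out(T)\cap\M(\calh')\cap\Out(G;\calk)
=\Out(T,\mk\calh',\calk).
\]
Because $\calh'$ is a finite family of cyclic groups, the remark following Definition~\ref{pres} gives that $\Out(G;\mk\calh')$ has finite index in $\Out(G;\calh')$. Intersecting with $\Out(T)\cap\Out(G;\calk)$, it follows that $\Out(T,\mk\calh',\calk)$ has finite index in $\Out(T)\cap\Out(G;\calh'\cup\calk)=\Out(T,\calh'\cup\calk)$.

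Finally, the family $\calh'\cup\calk$ is a finite family of abelian subgroups, each fixing a point in $T$, so Theorem~\ref{mcgg} applies and yields that $\Out(T,\calh'\cup\calk)$ is of type VF. Since type VF is inherited by subgroups of finite index (a finite index subgroup of a group of type F is itself of type F via finite covers of the classifying space), we conclude that $\Out(T,\mk\calh,\calk)$ is of type VF. The only mild subtlety, and the sole point requiring care, is to make sure that when invoking Corollary~\ref{genmc} one can choose the representatives of the conjugacy classes in $\calc$ inside the original subgroups $H_i$; this is exactly what guarantees the ellipticity needed to feed $\calh'\cup\calk$ into Theorem~\ref{mcgg}.
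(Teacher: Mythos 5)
Your proposal is correct and follows essentially the same route as the paper: reduce $\M(\calh)$ to an elementary McCool group via Corollary~\ref{genmc} with representatives chosen inside the groups of $\calh$ (hence elliptic in $T$), pass to the finite-index overgroup $\Out(T,\calh'\cup\calk)$, and apply Theorem~\ref{mcgg}. The ellipticity point you flag as the "sole subtlety" is exactly the one the paper also records.
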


  \begin{proof}
 By Corollary \ref{genmc}, we may write $\Out(G;\mk\calh)=\M(\calc)$ for some finite family of conjugacy classes $[c_i]$, with each $c_i$ belonging to a group of $\calh$ hence elliptic in $T$. Defining $\call=\{\langle c_i\rangle\}$, we see that $\M(\calc)$ is a finite index subgroup of $ \Out(G;\call)$, so $\Out(T,\mk\calh,\calk)$ is a finite index subgroup of $\Out(T,\calk\cup\call)$. 
By Theorem \ref{mcgg}, this group has type VF, 
and therefore so does $\Out(T,\mk\calh,\calk)$. 
  \end{proof}

   Proposition  \ref{infmc} and Theorem \ref{uccfix} will be proved at the end of the section.

\begin{prop*}[Proposition \ref{mccet}] 
Given  a toral relatively hyperbolic group $G$, there exists a number $C$ such that,  
if a subgroup $\wh M\subset \Out(G)$  contains a  group $  \M(\calh)$ with finite index, then 
 the index $[\wh M:\M(\calh)]$ is bounded by $C$.
\end{prop*}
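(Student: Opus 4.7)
The plan is to bound the index by first replacing $\M(\calh)$ with its normal core in $\wh M$. Set $\calh^*=\bigcup_{\Phi\in\wh M}\Phi\calh$; since $\M(\calh)$ has finite index in $\wh M$, the $\wh M$-orbit of each $H\in\calh$ is finite, so $\calh^*$ is a finite family of finitely generated subgroups, and $N:=\bigcap_{\Phi\in\wh M}\Phi\M(\calh)\Phi^{-1}=\M(\calh^*)$ is itself a McCool group, normal in $\wh M$. As $[\wh M:\M(\calh)]\le[\wh M:N]$ and $\wh M\subset\widehat\Out(G;\calh^*)$ preserves $\calh^*$ setwise, it suffices to bound $[\wh M:N]$ by some $C(G)$.

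The bound will be produced by mimicking the strategy of the proof of Theorem \ref{mccc}. The group $\wh M$ acts on the Grushko deformation space of $G$ relative to $\calh^*$; after passing to a bounded-index subgroup (the bound coming from the Grushko rank of $G$), one may assume every Grushko factor is preserved and reduce to the case when $G$ is freely indecomposable relative to $\calh^*$. In that case $\wh M$ preserves the canonical $\RC$-JSJ tree $U^*$ relative to $(\calh^*)^{+ab}$ constructed in Section \ref{pfcc}, because $U^*$ depends only on the set $\calh^*$. Using Proposition \ref{access} together with vertex finiteness \cite{GL_vertex} to bound the combinatorics of the quotient graph $\Gamma^*=U^*/G$, we pass to a further bounded-index subgroup $\wh M_0\subset\wh M$ acting trivially both on $\Gamma^*$ and on $\calh^*$ by permutations. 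Setting $N_0:=N\cap\wh M_0$ and using that the group of twists $\Tw$ lies in $N_0$, the exact sequence $1\to\Tw\to\wh M_0\to\prod_v\rho_v(\wh M_0)\to 1$ restricts to a parallel sequence for $N_0$, so the problem reduces to bounding $[\rho_v(\wh M_0):\rho_v(N_0)]$ uniformly at each vertex $v$ of $\Gamma^*$.

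At an abelian vertex $G_v\simeq\bbZ^{k_v}$, Proposition \ref{image} identifies $\rho_v(N_0)$ (modulo bounded index) with the subgroup of $GL(k_v,\bbZ)$ fixing pointwise the subgroup $F_v$ generated by the incident edge groups and by $\calh^*_{||G_v}$; hence $\rho_v(\wh M_0)/\rho_v(N_0)$ embeds into $\Aut(F_v)$ as a finite subgroup of $GL(\Rk(F_v),\bbZ)$, whose order is bounded by the classical Minkowski--Jordan bound on finite subgroups of $GL(n,\bbZ)$ applied to $n=\Rk(F_v)\le k_v$, itself uniformly bounded in $G$. Since $U^*$ is bipartite with each edge joining a non-abelian vertex to a maximal-abelian vertex, this also yields a uniform bound on the edge-action $\rho_e(\wh M_0)$ for every edge $e$. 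At a rigid vertex, $\rho_v(N_0)$ is finite of bounded order by Lemma \ref{lem_fini} combined with vertex finiteness and virtual torsion-freeness of $\Out(G_v)$; the quotient $\rho_v(\wh M_0)/\rho_v(N_0)$ then embeds into the product of the bounded edge-actions $\rho_e(\wh M_0)/\rho_e(N_0)$ together with a finite obstruction controlled again by Lemma \ref{lem_fini}. The QH case is similar, the index being bounded by $(2b)!\cdot 2^b$ with $b$ the uniformly bounded number of boundary components of the underlying surface. Combining all per-vertex bounds yields $[\wh M:N]\le C(G)$, completing the proof. The principal obstacle is the rigid-vertex step: a direct bound on $\rho_v(\wh M_0)$ is unavailable (edge groups may be non-cyclic abelian, so $\Out(G_v;\Inc_v)$ need not be finite), and one must propagate the bound through the adjacent abelian vertices using the bipartite structure of $U^*$.
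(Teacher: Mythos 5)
Your overall strategy is the same as the paper's: replace $\calh$ by the $\wh M$-invariant family $\calh^*$ (the paper works with a finite $\wh M$-invariant set of conjugacy classes $\mathcal{C}$ and bounds $[\wh\M(\mathcal{C}):\M(\mathcal{C})]$), then run the Grushko/JSJ per-vertex analysis of Section \ref{pfcc}. The first genuine gap is the step where you pass to ``a further bounded-index subgroup $\wh M_0$ acting trivially both on $\Gamma^*$ and on $\calh^*$ by permutations''. The action on $\Gamma^*$ is indeed of bounded index (Proposition \ref{access}), but the cardinality of $\calh^*$ is not bounded in terms of $G$ alone (it is controlled only by $|\calh|$ and by the very index $[\wh M:\M(\calh)]$ you are trying to bound), and boundedness of the permutation image of $\wh M$ on $\calh^*$ is essentially Corollary \ref{rless}, which the paper \emph{deduces} from this proposition. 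So as written this step is circular: you may pass to the kernel of the permutation action (its index is finite, since $N$ fixes each class), but that index is merely finite, not bounded by a constant $C(G)$, and your later per-vertex estimates tacitly use this unproved boundedness, e.g.\ when you treat each class of $\calh^*_{||G_v}$ as fixed by $\wh M_0$.

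The rigid-vertex step is also off, and the ``obstacle'' you flag does not exist. Lemma \ref{lem_fini} allows finite families $\calk$ of finitely generated, possibly non-cyclic, abelian subgroups; rigidity means that $G_v$ admits no abelian splitting relative to $\Inc_v\cup\calh^*_{||G_v}$, not that $\Out(G_v;\Inc_v)$ is finite. Hence $\rho_v(N_0)$ is finite, and since $N_0$ has finite (even if not yet bounded) index in $\wh M_0$, the whole image $\rho_v(\wh M_0)$ is a finite subgroup of $\Out(G_v)$; its order is then bounded in terms of $G$ only, because $G_v$ has boundedly many isomorphism types by \cite{GL_vertex} and $\Out(G_v)$ is virtually torsion-free \cite{GL6}. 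This is exactly the paper's argument (as in the rigid case of Lemma \ref{outun}), and it gives the direct bound you declared unavailable. Your substitute does not work as stated: the kernel of $\rho_v(\wh M_0)\to\prod_e\Out(G_e)$ need not be contained in $\rho_v(N_0)$, because elements of $\wh M_0$ may still act nontrivially on, or permute, the unboundedly many classes in $\calh^*_{||G_v}$ --- precisely the issue hidden in the circular step above --- and the ``finite obstruction controlled by Lemma \ref{lem_fini}'' yields finiteness, not a bound depending only on $G$. A smaller slip: at abelian vertices $\rho_v(\wh M_0)/\rho_v(N_0)$ does not literally embed in $\Aut(F_v)$, since $N_0$ need not act trivially on incident edge groups; what one bounds (as the paper does with $H$ the set of elements with finite orbit) is the image of $\rho_v(\wh M_0)$ in $\Aut(F_v)$, whose finiteness should be justified by finite orbits of a generating set rather than by setwise invariance of $\calh^*_{||G_v}$, the pointwise stabilizer of $F_v$ lying in $\rho_v(N_0)$ by extension ``by the identity''.
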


   \begin{proof} [Proof of Proposition \ref{mccet}]  
   
 By   Corollary \ref{genmc}, we may write 
$ \M(\calh)=\M(\calc')$ for some finite set $\calc'$.
Let $\calc$ be the orbit of $\calc'$ under $\wh M$. Since $\M(\calc')$ fixes $\calc'$, 
this is a finite $\wh M$-invariant collection of conjugacy classes. 
We thus have  $$\M(\calc)\subset \M(\calc')\subset \wh M\subset \wh\M(\calc),$$
and it suffices to bound the index $[\wh\M(\calc):\M(\calc)]$.

 As in  the beginning of  Section  \ref{pfcc}, 
    let $G=G_1*\dots *G_n*F_r$ be a Grushko decomposition of $G$ relative to $\calc$, and $\calg=\{G_1,\dots G_n\}$.
 The group  $\wh\M(\calc)$ permutes the conjugacy classes of the groups in $\calg$. 
Since the cardinality of $\calg$ is bounded, and $G$ has finitely many free factors up to isomorphism, we may assume that $G$ is one-ended relative to $\calc$. 
   
We now consider the   JSJ decomposition $\Tcan$ over abelian groups relative to $\calc$ and non-cyclic abelian groups. It is invariant under $\wh\M(\calc)$, so we may study $\wh\M(\calc)$ through its action on $\Tcan$ (see Subsection \ref{automs}). 

The number of edges of $\Gcan=\Tcan/G$ being bounded by  the first case of Proposition \ref{access}, we may replace $\wh\M(\calc)$ and $\M(\calc)$ by  their subgroups $\wh\M^0(\calc)$ and $\M^0(\calc)$  acting trivially  on $\Gamma$. The group of twists $\calt$ is contained in  $\M^0(\calc)$, so 
as in the proof of Lemma \ref{outun} it suffices to construct $\Out^1(G_v)\inc\M_{G_v}(\Inc_v\cup\calc_{ |  | G_v} )$  with the index of $\Out^1(G_v)$ in $\rho_v(\wh\M^0(\calc))$ uniformly bounded. 
We distinguish the same cases  as in 
 the proof of Lemma \ref{outun}.

  If $G_v$ is abelian, isomorphic to $\Z^k$ with $k\ge2$, let $H<G_v$ be
the set of elements whose orbit under $\rho_v(\wh\M^0(\calc))$ is finite. This is 
a subgroup of $G_v$, isomorphic to some $\Z^p$, which is invariant under $\rho_v(\wh\M^0(\calc))$
and contains the  incident edge groups by Lemma \ref{lem_fini}. We define $\Out^1(G_v)=\M_{G_v}(\{H\})$. It is contained in $\M_{G_v}(\Inc_v\cup\calc_{ |  | G_v} )$.
The image  of  $\rho_v(\wh\M^0(\calc))$ in $\Aut(H)=GL(p,\Z)$ is  finite, and its order bounds the index of $\Out^1(G_v)$  in  $\rho_v(\wh\M^0(\calc))$. 
 This concludes the proof in this case since there is a bound for the order of finite subgroups of $GL(p,\Z)$.

If $G_v$ is rigid, we let $\Out^1(G_v)$ be trivial. The image of $\wh\M^0(\calc)$ in $\Out(G_v)$ is finite by Lemma \ref{lem_fini}, and bounded by \cite{GL_vertex} as in the proof   of Lemma \ref{outun}. 

If $G_v=\pi_1(\Sigma)$ is QH, we define $\Out^1(G_v)=\calp\calm^+(\Sigma)=\M_{G_v}(\Inc_v \cup\calc_{||G_v} )$. Elements of $\rho_v(\wh\M^0(\calc))$ may reverse orientation, or permute boundary components of $\Sigma$.
 \end{proof}

\begin{cor}\label{eucc}
Extended   elementary McCool groups  $\wh\M(\calc)$ of $G$ satisfy a uniform chain condition. 
\end{cor}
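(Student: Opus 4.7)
The plan is to combine Proposition \ref{mccet} with the uniform chain condition for McCool groups (Theorem \ref{mccc}) by an iterated index-counting argument. I would set $A_i := \wh\M(\calc_i)$ and $M_i := \M(\calc_i)$, so that by Proposition \ref{mccet} one has $[A_i : M_i] \le C$ for some $C = C(G)$ independent of the chain; moreover $M_i$ is normal in $A_i$, being the kernel of the action of $A_i$ on the finite set $\calc_i \cup \calc_i\m$.

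First I would pass to the descending chain $N_i := M_0 \cap A_i$ inside $M_0$, of index at most $C$ in $A_i$. Since $M_0$ is normal in $A_0$, the images $\bar A_i := A_i/N_i$ form a descending chain in the finite group $A_0/M_0$ of order $\le C$, so admit at most $\log_2 C$ strict descents. Whenever $\bar A_i = \bar A_{i+1}$, the identity $A_i = A_{i+1} N_i$ combined with the strict inclusion $A_i \supsetneq A_{i+1}$ forces $N_i \supsetneq N_{i+1}$. Hence at least $p - \log_2 C$ of the steps are strict for $(N_i)$, producing a strictly descending chain of length $\ge p - \log_2 C$ inside $M_0$, each of whose elements $N_{i_j}$ normally contains the McCool group $P_j := \M(\calc_0 \cup \calc_{i_j})$ with index $\le C$.

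I would then iterate: the new chain lies inside $M_0$ and consists again of bounded-index extensions of McCool groups, so the same argument applies. One pivots at an index $j$ with $M_0 \not\subset M_{i_j}$, which must exist whenever the chain has length $\ge 2$ (otherwise all $N_{i_j}$ would equal $M_0$, contradicting strictness), thereby strictly shrinking the ambient McCool group from $M_0$ to $P_j$. Each iteration thus reduces the chain length by at most $\log_2 C + 1$ while strictly descending the ambient McCool group; by Theorem \ref{mccc} at most $C(G)$ such strict descents are possible, and once no further iteration is possible the chain must have collapsed to length $\le 1$. This yields $p \le 1 + C(G)(\log_2 C + 1)$, a bound depending only on $G$.

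The main obstacle will be formalizing the iteration cleanly, in particular checking at each level that the pivot element normally contains the specified McCool subgroup of index $\le C$ (so that the action on the enlarged family of conjugacy classes has image of order $\le C$, allowing the Step 1 index argument to be rerun verbatim) and that a pivot forcing strict McCool descent can always be chosen as long as the chain has not yet collapsed. Once this bookkeeping is in place, the bound reduces to counting strict descents in an auxiliary chain of McCool groups, controlled by Theorem \ref{mccc}.
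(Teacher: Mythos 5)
Your argument is correct, and the bookkeeping you flag does go through: normality and the bound $\le C$ on the index of the McCool subgroup persist under intersection (intersecting with a subgroup does not increase the index, and the intersection of two elementary McCool groups is again one), and since the extracted chain is strictly descending, at most its top term can coincide with the current ambient McCool group, so the pivot can always be taken to be the \emph{second} term; this is what justifies your claim that each round costs at most $\log_2 C+1$. However, the paper reaches the conclusion by a much shorter route that avoids the whole iteration: given the descending chain $\wh\M(\calc_i)$, set $\calc'_i=\calc_0\cup\dots\cup\calc_i$ and observe that
$$\M(\calc'_i)=\bigcap_{j\le i}\M(\calc_j)\ \inc\ \wh\M(\calc_i)=\bigcap_{j\le i}\wh\M(\calc_j)\ \inc\ \wh\M(\calc'_i),$$
so that each term of the \emph{given} chain already contains the single McCool group $\M(\calc'_i)$, with index at most $C$ by Proposition \ref{mccet}; Theorem \ref{mccc} bounds the number of strict drops of the descending chain $\M(\calc'_i)$, and on each run where it is constant the strictly descending $\wh\M(\calc_i)$ all contain a common subgroup of index $\le C$, so such a run has at most $C$ terms, giving a bound of the form $(C(G)+1)\,C$ in one stroke. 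In effect, your $N_i=M_0\cap A_i\supset\M(\calc_0\cup\calc_i)$ rebuilds by hand, one family at a time and at the price of the pivot/induction machinery, what the cumulative-union containment $\M(\calc'_i)\inc\wh\M(\calc_i)$ (valid precisely because the chain is descending) provides for free; your version is self-contained quotient-and-index counting, the paper's is shorter and aligns a single descending McCool chain with the original chain all at once.
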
 

\begin{proof}
Given a descending chain $\wh\M(\calc_i)$, define $\calc'_i=\calc_0\cup\dots\cup \calc_i$ and note that 
$$\M(\calc'_ i)=\cap_{j\le i} \M(\calc_j)\inc \wh\M(\calc_i)=\cap_{j\le i}\wh\M(\calc_j)\inc \wh\M(\calc'_i).$$
The corollary follows from Theorem \ref{mccc}, since by Proposition \ref{mccet} the index of $\M(\calc'_ i)$ in $\wh\M(\calc'_i)$ is bounded.
\end{proof}

We now prove Corollary \ref{rless} stating that, for any $A<\Out(G)$, there is a subgroup $A_0<A$  of bounded finite index such that,  for the action of $A_0$ on the set of conjugacy classes of $G$,  every  orbit is   a singleton  or is infinite.

\begin{proof}  [Proof of Corollary \ref{rless}]
Let $\calc_A$ be the (possibly infinite) set of  conjugacy classes of $G$ whose $A$-orbit is finite.
Partition $\calc_A$ into $A$-orbits, and let  $\calc_p$ be the union of the first $p$ orbits. The image of $A$ in the group of permutations of $\calc_p$ is contained in that of $\wh\M(\calc_p)$, so  by Proposition \ref{mccet} its order is bounded by some fixed $C$. This $C$ also bounds the order of the image of  $A$ in the group of permutations of $\calc_A$.
\end{proof}

Recall that   $
\AM(\calh, H_0)\inc\Aut(G)$ is the group of automorphisms acting trivially on $\calh$ (in the sense of Definition \ref{gmc},  i.e.\ by conjugation) and fixing the elements of $H_0$.
Proposition \ref{mcaut} 
states that, if $G$ is non-abelian,  then $
 \AM(\calh, H_0)$ is an extension $$1\to K\to
 \AM(\calh, H_0)\to \M(\calh')\to 1$$ with $\M(\calh')\inc\Out(G)$   a McCool group, and   $K $   the centralizer of $H_0$. 
Corollary \ref{mca}
states that the groups $ \AM(\calh, H_0)$ are of type VF 
and satisfy a uniform chain condition.

   \begin{proof} [Proof of Proposition \ref{mcaut}] 
   Let $\calh'=\calh\cup\{H_0\}$. 
 Map 
 $ 
 \AM(\calh, H_0)\subset \Aut(G)$
  to $\Out(G)$. The image is $\M(\calh')$. 
The kernel  $K$ is the set of inner automorphisms equal to the identity on $H_0$. Since $G$ has trivial center, it is isomorphic to the centralizer of $H_0$. 
   \end{proof}

    \begin{proof} [Proof of Corollary \ref{mca}] 
     The group $\M(\calh')$ has type VF by Theorem  \ref{mc}. The group $K$ is abelian or equal to $G$, so   has type F  because    $G$ does   \cite{Dah_classifying}. Proposition \ref{mcaut} and Corollary \ref{cor_extensionVF} imply that 
    $
 \AM(\calh, H_0)$
has type VF. 
Moreover, a chain of centralizers has length at most 2 since the centralizer of $H_0$ is trivial, $G$, or a maximal abelian subgroup. The uniform chain condition for McCool groups (Proposition \ref{mccc}) then implies the uniform chain condition for groups of the form $
 \AM(\calh, H_0)$.
  \end{proof}

We now deduce the bounded chain condition for fixed subgroups.

  \begin{proof} [Proof of Theorem \ref{uccfix}]  Let  $J_0\incs J_1\incs\dots\incs J_p$ be a strictly ascending chain of fixed subgroups.
Let $\AM(\es,J_i)$ be  the subgroup of $\Aut(G)$ consisting of automorphisms equal to the identity on $J_i$. 
Since $J_i$ is a fixed subgroup,  $\AM(\es, J_i)\supsetneq \AM(\es, J_{i+1})$.
Corollary \ref{mca} then gives a bound on the length on the chain.
\end{proof}

\begin{rem*} One can adapt the arguments of Section \ref{pfcc} to prove Theorem \ref{uccfix} directly (without passing through McCool groups).
\end{rem*}

We now prove   Proposition \ref{infmc} saying that $\Out(F_n)$ contains infinitely many non-isomorphic McCool groups
for $n\geq 4$, and infinitely many non-conjugate McCool groups for $n\geq 3$.

\begin{proof}[Proof of Proposition \ref{infmc}]
Let $H$ be the free group on three generators $a,b,c$. Given a non-trivial element $w\in\langle a,b\rangle$, let $P_w$ be the cyclic HNN extension $P_w=\langle a,b,c,t\mid tct\m=w\rangle$. It is free of rank 3, with basis $a,b,t$. Let $\varphi_w$ be the automorphism of $P_w$ fixing  $a$ and $b$,
and mapping $t$ to $wt$    
(it  equals  the identity on $H$ since it fixes $c=t\m wt$). 
The image   $\Phi_w$ of $\varphi_w$ in $\Out(P_w)$  preserves the Bass-Serre tree $T$ of the  HNN extension (it belongs to   its group of twists $\calt$).

We   apply this construction  with $w=a^kb^k$, for $k$ a positive integer. As $k$ varies, the 
  cyclic subgroups $\langle\Phi_w\rangle$
are pairwise non-conjugate in $\Out(P_w)\simeq\Out(F_3)$, as seen by considering the action on the 
abelianization.

We shall now prove  the second assertion of the proposition for $n=3$,
 by showing that
  $\langle\Phi_w\rangle$ is a   McCool group of $P_w$,   
namely  $\langle\Phi_w\rangle=\M_{P_w}(\{H\})\subset \Out(F_3)$. 
The extension to $n>3$ is straightforward, by adding generators to $H$.

Consider splittings of $P_w$ over abelian (i.e.\ cyclic) subgroups relative to $H$.
The  tree $T$ 
is a JSJ tree because its vertex stabilizers are universally elliptic \cite[lemma 4.7]{GL3a}; in particular, $P_w$ is freely indecomposable relative to $H$. Moreover, $T$ equals its tree of cylinders (up to adding redundant vertices) because $w$ is not a proper power, so $T$ is the canonical JSJ tree $\Tcan$. The McCool group  $\M_{P_w}(\{H\})$ therefore  leaves $T$ invariant, and it is easily checked using \cite{Lev_automorphisms} that 
$\M_{P_w}(\{H\})=\Tw= \langle\Phi_w\rangle$.

To prove the first assertion  of the proposition,  consider $R_w=P_w*\langle d\rangle\simeq F_4$,  the family $\calh=\{H,\langle d\rangle\}$, and the McCool group
$\M_{R_w}(\calh)\subset \Out(F_4)$.
The decomposition $R_w=P_w*\langle d\rangle$ is a Grushko decomposition of $R_w$ relative to 
 $\calh$ because $P_w$ is freely indecomposable relative to $H$.
This decomposition   is invariant under $\M_{R_w}(\calh)$ 
because it is a one-edge splitting (see \cite{For_deformation}, cor 1.3).

The stabilizer    $\Out(T)$ of the Bass-Serre tree $T$ in $\Out(R_w)$ is naturally isomorphic to 
$$\Aut(P_w)\times \Aut(\grp{d})\simeq \Aut(P_w)\times\Z/2\Z$$ 
(see \cite{Lev_automorphisms});
  the natural map $\Out(T)\ra \Out(P_w)$ kills the factor $\Z/2\Z$ and 
coincides with the quotient map $\Aut(P_w) \ra \Out(P_w)$ on the other factor.
The McCool group $\M_{R_w}(\calh)$ is isomorphic to
 the preimage   of $\M_{P_w}(\{H\})=\grp{\Phi_w}$  in $\Aut(P_w)$,  
 hence to the  mapping torus   
$$Q_w= \langle a,b,t,u\mid ua=au, ub=bu, utu\m= a^kb^kt \rangle.$$
 The  abelianization of $Q_w$ is $\Z^3\times \Z/k\Z$, so  the isomorphism type of $Q_w$ changes when $k$ varies. This proves the first assertion of the proposition  or $n=4$. The extension to larger $n$ is again straightforward.
\end{proof}

\section{Appendix: groups with finitely many McCool groups
}

In this appendix we 
describe cases when $\Out(G)$ only contains finitely many McCool subgroups. In particular, we
show that the values of $n$ given in  Proposition \ref{infmc} are optimal.

\begin{prop} \label{mccoolfini} If $G$ is a torsion-free one-ended hyperbolic group, then $\Out(G)$ only contains finitely many McCool groups up to conjugacy. 
\end{prop}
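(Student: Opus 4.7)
My plan is to analyze McCool groups through their action on the canonical cyclic JSJ tree $\Tcan$ of $G$, which is $\Out(G)$-invariant because $G$ is torsion-free, one-ended, and hyperbolic. First I would reduce to the finite-index subgroup $\Out^0(G)\subset\Out(G)$ acting trivially on $\Gcan=\Tcan/G$ (the index is uniformly bounded using Proposition \ref{access}), so it suffices to show that $M^0:=\M(\calh)\cap\Out^0(G)$ takes finitely many values up to $\Out^0(G)$-conjugacy. I would then classify $M^0$ using the exact sequence
$$1\to\calt\to\Out^0(G)\xra{\ \rho\ }\prod_v\Out(G_v;\Inc_v),$$
where $\calt$ is the finitely generated free abelian twist group, by separately recording its image $\rho_v(M^0)$ at each vertex $v$ of $\Gcan$ and the sublattice $M^0\cap\calt$.

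Next I would control the vertex images. At cyclic vertices, $\Out(G_v;\Inc_v)$ is finite and contributes finitely many options. At rigid vertices, $\rho_v(M^0)$ is finite by Lemma \ref{lem_fini}; using that rigid vertex groups fall into finitely many isomorphism types (by \cite{GL_vertex}) and that each $\Out(G_v)$ is virtually torsion-free (by \cite{GL6}) with only finitely many conjugacy classes of finite subgroups, there are finitely many possibilities up to conjugacy. At a QH vertex $v$ with surface $\Sigma_v$, the image $\rho_v(M^0)$ is, up to bounded finite index, the stabilizer in $\calp\calm^+(\Sigma_v)$ of the finite set of conjugacy classes of $\calh$ lying (up to conjugacy) in $G_v$. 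The simple representatives among these yield a multicurve on $\Sigma_v$, whose topological type has finitely many possibilities, while non-simple classes contribute a finite subgroup of $\calp\calm^+(\Sigma_v)$, and finite subgroups of the mapping class group of a fixed surface fall into finitely many conjugacy classes by Nielsen realization and Riemann--Hurwitz.

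Finally I would handle the twist intersection $M^0\cap\calt$. The group $\calt$ has a natural basis indexed by the edges of $\Gcan$, one Dehn twist $\tau_e$ per edge (once a generator of each cyclic edge stabilizer is chosen). The crucial observation is that for each conjugacy class $[c]$, the stabilizer of $[c]$ in $\calt$ is a coordinate sublattice: $\tau_e$ fixes $[c]$ exactly when the axis of $c$ (or its fixed set, if $c$ is elliptic) avoids the $G$-orbit of $e$ in $\Tcan$; otherwise the orbit of $[c]$ under $\langle\tau_e\rangle$ is infinite, since in a torsion-free hyperbolic group, inserting distinct powers of the infinite-order edge stabilizer into the Bass--Serre normal form yields genuinely non-conjugate elements. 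Hence $M^0\cap\calt$ is one of the $2^{|E(\Gcan)|}$ coordinate sublattices indexed by subsets of edges, and the $\Out(G)$-action on $\calt$ permutes these basis vectors up to sign. Combining this with the vertex classification above yields finitely many possibilities for $M^0$ up to $\Out^0(G)$-conjugacy, hence finitely many McCool groups up to $\Out(G)$-conjugacy.

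The hardest step will be the QH analysis: ensuring that a mapping class fixing the isotopy class of a non-simple closed geodesic has bounded order, and that arbitrary stabilizers (intersections of such subgroups) in $\calp\calm^+(\Sigma_v)$ really form finitely many conjugacy classes. One also needs to verify carefully the ``coordinate sublattice'' description of $M^0\cap\calt$, which reduces to the (standard but slightly delicate) fact that iterating a non-trivial Dehn twist on a conjugacy class whose axis crosses the supporting edge produces a free $\Z$-orbit of conjugacy classes in the ambient torsion-free hyperbolic group.
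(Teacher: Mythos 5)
Your approach has a genuine structural gap: you work with the absolute canonical JSJ tree $\Tcan$ of $G$, in which the classes of $\calc$ need not be elliptic, and you then try to pin down $M^0=\M(\calc)\cap\Out^0(\Tcan)$ by the two separate pieces of data $\bigl(\rho_v(M^0)\bigr)_v$ and $M^0\cap\calt$. A subgroup of the extension $1\to\calt\to\Out^0(\Tcan)\to\prod_v\Out(G_v;\Inc_v)$ is not determined by its image and its intersection with the kernel, and here the defining condition genuinely mixes the two: for a class $[c]$ that is hyperbolic in $\Tcan$, a product of a twist with vertex automorphisms can fix $[c]$ while neither factor does. For the same reason your ``coordinate sublattice'' claim is unjustified: your normal-form argument controls the orbit under a single $\tau_e$, but says nothing about multitwists $\prod_e\tau_e^{n_e}$, whose insertions along different crossings can cancel, so the stabilizer of $[c]$ in $\calt\cong\Z^k$ is a priori an arbitrary sublattice. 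Thus even if each piece of data took finitely many values up to conjugacy, this would not bound the number of McCool groups. The paper avoids all of this by using the cyclic JSJ tree \emph{relative to} $\calc_i$ (there are finitely many such trees up to the action of $\Out(G)$ by Sela's acylindrical accessibility), in which every class of $\calc_i$ is elliptic; then $\M(\calc_i)$ contains the whole group of twists and is literally determined by the local vertex conditions $\Out_i(G_v)$. (The paper also warns explicitly that one cannot restrict to $\Out^0(T)$ in this proof: knowing $\M(\calc)\cap\Out^0$ up to conjugacy does not determine $\M(\calc)$ up to conjugacy.)

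The vertex analysis also has problems. At a QH vertex, the stabilizer in $\calp\calm^+(\Sigma)$ of a non-simple class is in general \emph{not} finite (it contains all mapping classes supported in the complement of the subsurface filled by the class), so ``finite subgroups via Nielsen realization'' does not classify these stabilizers; classifying arbitrary such stabilizers up to conjugacy is essentially the hard content, not a quotable fact. In the paper's relative JSJ this issue disappears: QH vertices are never ``used'', since universally elliptic elements of a flexible vertex are peripheral and, by one-endedness, boundary subgroups are incident edge groups. At rigid vertices, virtual torsion-freeness of $\Out(G_v)$ bounds the order of the finite image but does not give finitely many conjugacy classes of finite subgroups -- the paper needs the extension-finiteness result of \cite{GL_extension} for $\wh\Out(G_v;\Inc_v)$ -- and, even granted that, the conjugator produced in $\Out(G_v)$ must extend to an automorphism of $G$; the paper addresses exactly this by conjugating inside $\M(\Inc_v)$, whose elements extend ``by the identity'' to $G$. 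Your proposal does not address either point.
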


\begin{prop} \label{infmc2}
$\Out(F_2)$ only contains finitely    many   McCool groups up to conjugacy. 
\end{prop}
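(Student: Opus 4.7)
My plan is to reduce to a finite combinatorial problem via JSJ theory, exploiting the simple structure of $F_2$. Using Corollary \ref{genmc}, every McCool group of $F_2$ can be written as $\M(\calc)$ for some finite set of conjugacy classes $\calc$, and by Lemma \ref{minos} I may assume each $c\in\calc$ is represented by an element that is not a proper power. Two such McCool groups $\M(\calc)$ and $\M(\calc')$ are conjugate in $\Out(F_2)$ if and only if $\calc'$ lies in the $\Out(F_2)$-orbit of $\calc$, modulo the $\M$-preserving equivalence $[c]\sim[c\m]$.

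I will split on the Grushko decomposition of $F_2$ relative to $\calc$. If $F_2$ is freely decomposable relative to $\calc$, then since the only proper non-trivial free factors of $F_2$ are infinite cyclic subgroups generated by primitive elements, the decomposition must read $F_2=\langle a\rangle*\langle b\rangle$ for some basis $\{a,b\}$. Each $c\in\calc$ is then conjugate to $a^{\pm 1}$ or $b^{\pm 1}$, and since $\Out(F_2)$ acts transitively on unordered bases up to sign, the possibilities for $\calc$ up to $\Out(F_2)$-conjugacy and the equivalence above reduce to $\emptyset$, $\{[a]\}$, or $\{[a],[b]\}$, giving at most three McCool groups.

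If $F_2$ is freely indecomposable relative to $\calc$, I will work with the canonical JSJ tree $\Tcan$ over abelian (hence cyclic) subgroups relative to $\calc$, which is $\widehat\Out(F_2;\calc)$-invariant. The number of edges of $\Gcan=\Tcan/F_2$ is bounded by Proposition \ref{access}. Every vertex stabilizer is a free subgroup of $F_2$, hence of rank at most $2$; moreover any non-abelian vertex group of a splitting of $F_2$ must equal $F_2$, so $\Gcan$ contains at most one non-abelian vertex, which is either rigid or QH. A QH vertex corresponds to a surface whose fundamental group is a free rank-$2$ group, and an Euler characteristic count gives the finite list $S_{0,3}, S_{1,1}, N_{1,2}, N_{2,1}$. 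Combining this with the edge bound leaves only finitely many possibilities for the isomorphism type of $\Gcan$.

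For each such type, the family $\calc$ attaches to specific substructures---boundary subgroups of the QH vertex, or cyclic subgroups of rigid or abelian vertices---with ellipticity in $\Tcan$ constraining the attachment. Up to the $\widehat\Out(F_2;\calc)$-action there are only finitely many such attachments, and for each one the McCool group $\M(\calc)$ is determined up to bounded finite index, using Proposition \ref{mccet}, via the extension by mapping class groups of QH pieces and the group of twists described in Section \ref{automs}. The main obstacle will be controlling this interaction between the JSJ decomposition and the attached family $\calc$; crucially, the rank bound $2$ prevents non-abelian vertex groups from splitting further and limits the QH surfaces to a short explicit list.
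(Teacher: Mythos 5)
Your reduction in the freely decomposable case is essentially fine (though the passage from an element to its root is justified by uniqueness of roots in $F_2$, not really by Lemma \ref{minos}), but the freely indecomposable case has a genuine gap, and it sits exactly where the difficulty of the statement lies. You assert that the edge bound of Proposition \ref{access} together with the rank bound leaves ``only finitely many possibilities for the isomorphism type of $\Gcan$'', and later that ``up to the action there are only finitely many such attachments''. Neither is proved, and neither follows from what you cite: bounding the number of edges and the ranks of vertex groups does not bound the splitting up to the action of $\Out(F_2)$, because the inclusions of edge groups into vertex groups can vary; for instance the amalgams $\langle a\rangle *_{\langle a^k\rangle}\langle a^k,b\rangle$, $k\ge 2$, are pairwise non-equivalent minimal cyclic splittings of $F_2$ with the same underlying graph and the same vertex ranks. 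In the one-ended hyperbolic case (Proposition \ref{mccoolfini}) the analogous finiteness comes from Sela's acylindrical accessibility \cite{Sela_acylindrical}, which is not available for free groups; indeed any argument of this shape must use rank $2$ in an essential way, since by Proposition \ref{infmc} the conclusion fails for $F_3$ and $F_4$, and the rank-$2$ facts you invoke do not do this job. (Also, ``any non-abelian vertex group of a splitting of $F_2$ must equal $F_2$'' is false: the HNN splitting of $F_2=\pi_1(S_{1,1})$ dual to a non-separating curve has a proper rank-$2$ vertex group, a pair-of-pants group. The correct statement, via an Euler characteristic count with infinite cyclic edge groups, is that there is at most one non-abelian vertex and it has rank $2$.)

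There is a second gap at the end: knowing $\M(\mathcal{C})$ ``up to bounded finite index'' via Proposition \ref{mccet} does not give finiteness up to conjugacy; one must identify the actual subgroup, and in particular control the finite image of $\M(\mathcal{C})$ at a rigid central vertex, which varies with $\mathcal{C}$ (in the one-ended case the paper needs finiteness of conjugacy classes of finite subgroups of relative mapping class groups from \cite{GL_extension} at the analogous step). The paper's own proof avoids the JSJ machinery entirely: it identifies $\Out(F_2)$ with the mapping class group of a once-punctured torus, shows that an infinite $\M(\mathcal{C})$ forces a cyclic splitting dual to a non-peripheral simple closed curve $\gamma$, and then either two such curves exist and fill the surface, so $\mathcal{C}$ is peripheral and $\M(\mathcal{C})$ is $GL(2,\Z)$ or $SL(2,\Z)$, or $\gamma$ is unique and $\langle T_\gamma\rangle\inc\M(\mathcal{C})\inc\Stab(\gamma)$ with $[\Stab(\gamma):\langle T_\gamma\rangle]=4$; finiteness up to conjugacy then follows because all such $\gamma$ lie in a single mapping class group orbit. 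If you wish to keep your JSJ route rather than switch to this argument, the two finiteness assertions above are precisely what you would have to prove.
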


\begin{prop} \label{infmc3} $\Out(F_3)$ only contains finitely many   McCool groups up to isomorphism.
\end{prop}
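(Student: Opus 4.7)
The strategy is to reduce to the freely indecomposable case using the relative Grushko decomposition together with Proposition \ref{fp}, then control $\M(\calc)$ via the canonical JSJ decomposition of $F_3$ and the bounded combinatorics it affords.

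By Corollary \ref{genmc}, every McCool group of $F_3$ has the form $\M(\calc)$ for a finite collection $\calc$ of conjugacy classes. Consider the relative Grushko decomposition $F_3 = G_1 * \dots * G_n * F_p$: since the total rank equals $3$, only finitely many combinatorial patterns $(\mathrm{rank}(G_i), p)$ occur, and each $G_i$ is free of rank at most $3$. By Proposition \ref{fp}, $\M(\calc)$ contains with bounded finite index a group fitting into an exact sequence
\[
1 \to \calt \to \Out(F_3; \mk[\bQ]\calg) \to \prod_i \M_{G_i}(\calc_{|G_i}) \to 1
\]
where $\calg = \{G_i\}$, $\bQ_i = \M_{G_i}(\calc_{|G_i})$, and $\calt$ is the group of twists of the Grushko tree. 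For rank-$1$ factors the McCool factor is trivial or $\Z/2\Z$; for rank-$2$ factors finiteness of isomorphism types follows from Proposition \ref{infmc2}; and the twist group, which has the form $\prod_i G_i^{n_i}/Z(G_i)$ with bounded valences $n_i$, also takes only finitely many isomorphism types.

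The core case is when $F_3$ is freely indecomposable relative to $\calc$. Take the canonical JSJ tree $\Tcan$ over cyclic subgroups (here $\cp = \calc$ because $F_3$ has no non-cyclic abelian subgroups). Proposition \ref{access} bounds the number of edges of $\Gcan = \Tcan/G$ in terms of $F_3$ alone; by \cite{GL_vertex} the rigid vertex groups fall into finitely many isomorphism classes; each QH vertex $v$ corresponds to a surface $\Sigma_v$ whose Euler characteristic is bounded (since the relevant ranks are at most $3$), so $\calp\calm^+(\Sigma_v)$ takes only finitely many isomorphism types. As in Subsection \ref{oe}, a finite-index subgroup $\M^0(\calc) \normal \M(\calc)$ fits in an exact sequence
\[
1 \to \calt \to \M^0(\calc) \to \prod_{v \text{ QH}} \calp\calm^+(\Sigma_v) \to 1,
\]
the action on rigid vertex groups being trivial by Lemma \ref{lem_fini}. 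Since $\calt$ is a finitely generated abelian group whose isomorphism type is determined by bounded combinatorial data, only finitely many isomorphism types arise for $\M^0(\calc)$.

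The main obstacle is to upgrade ``finitely many isomorphism types for a finite-index subgroup'' to ``finitely many types for $\M(\calc)$ itself,'' since an abstract group is not determined by one of its finite-index subgroups. The index $[\M(\calc) : \M^0(\calc)]$ is uniformly bounded (it divides the order of the combinatorial symmetry group of $\Gcan$ times the orientation and boundary-permutation symmetries of the QH surfaces, along the lines of Proposition \ref{mccet}), and the conjugation action of $\M(\calc)/\M^0(\calc)$ on $\M^0(\calc)$ is specified by this same bounded combinatorial data. Since $\Out(F_3)$ is virtually torsion-free with a uniform bound on orders of its finite subgroups, only finitely many isomorphism classes of such extensions can occur, completing the proof.
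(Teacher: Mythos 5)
Your overall plan (Grushko reduction, then the canonical JSJ decomposition with bounded combinatorics) starts in the right place, but two key steps do not hold as stated. First, the reduction in the decomposable case rests on a false exact sequence: the sequence $1\to\calt\to S\to\prod_i\bQ_i\to1$ appearing in the proof of Proposition \ref{fp} describes the stabilizer $S$ of a \emph{single} tree in the relative Grushko deformation space, whereas $\Out(F_3;\mk[\bQ]\calg)$ only preserves the deformation space and in general fixes no tree in it, so it is not an extension of $\prod_i\M_{G_i}(\calc_{|G_i})$ by a group of twists. Concretely, take $\calc=\{[a]\}$ with $a$ a basis element of $F_3=\langle a,b,c\rangle$: then $\calg=\{\langle a\rangle\}$ and all $\bQ_i$ are trivial, so your sequence would force $\M(\calc)$ to be virtually abelian, while $\M(\{[a]\})$ contains an embedded copy of $\Aut(F_2)$ (classes of automorphisms fixing $a$ and restricting to automorphisms of $\langle b,c\rangle$). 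The paper avoids this by choosing a relative Grushko decomposition with as few edges as possible, noting it then has a single edge and is $\M(\calc)$-invariant, and reducing to a McCool group of $F_2$ (or to the case where $\calc$ consists of powers of basis elements).

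The more serious gap is the counting step in the indecomposable case: from ``$\calt$ and $\prod_v\calp\calm^+(\Sigma_v)$ each take finitely many isomorphism types'' you cannot conclude that $\M^0(\calc)$ does. The middle term of an extension is governed by the outer action of the quotient on the kernel and by a class in $H^2$, and these are encoded in the actual inclusions of edge groups into vertex groups, which are not bounded data even when $\Gcan$, the ranks, and the surfaces are. Proposition \ref{infmc} makes this failure explicit one rank up: the McCool groups $\M_{R_w}(\calh)\subset\Out(F_4)$ constructed there are extensions $1\to F_3\to Q_w\to\Z\to1$ whose kernel (a group of twists) and quotient (a local McCool group, infinite cyclic) have constant isomorphism type, yet the groups $Q_w$ realize infinitely many isomorphism types. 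So any proof for $F_3$ must use the rank-$3$ hypothesis to control the extensions themselves, and this is precisely what the paper's argument does: Lemma \ref{vr} disposes of the case where all QH mapping class groups are virtually solvable, the nine possible QH surfaces are listed and constrained by Lemma \ref{ol} and the basis fact for cyclic amalgams of free groups, and in each remaining configuration one shows either that the group of twists is trivial, or that $\M(\calc)$ is determined by its image in $\Out(G_v)$ (resp.\ $\Out(G_v)\times\Out(G_w)$), or that one is in the situation of Lemma \ref{coh} (virtually cyclic kernel, virtually free quotient such as $\calp\calm^+(\Sigma)\cong F_2$ or $F_3$), where extension theory gives finiteness. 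Your final paragraph has the same unresolved extension problem when passing from $\M^0(\calc)$ to $\M(\calc)$: bounded index together with a bound on finite subgroups of $\Out(F_3)$ does not by itself bound the number of isomorphism types of overgroups, since the action of the finite quotient and the extension class must also be controlled.
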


The proof of Proposition
\ref{mccoolfini} requires the fact that $\Out(G)$, and more generally extended McCool groups $\widehat\M(\calc)$, only contain finitely many conjugacy classes of finite subgroups. This will appear in \cite{GL_extension}.

 \begin{proof}[Proof of Proposition
\ref{mccoolfini}]
We assume that $\Out(G)$ contains infinitely many non-conjugate elementary McCool groups $\M(\calc_i)$, and we derive a contradiction
(this   implies the proposition by   Corollary \ref{genmc}).

It is proved in   \cite[Corollary 4.9]{Sela_acylindrical} 
 that there are only finitely many minimal actions of $G$ on trees with cyclic edge stabilizers, up to the action of $\Out(G)$, so we may assume
that the canonical cyclic JSJ tree relative to $\calc_i$ (the tree $\Tcan$ of  Subsection \ref{jsj}) is a given tree $T$. This tree   is invariant under all groups $\M(\calc_i)$, so $\M(\calc_i)\inc \Out(T)$. In this proof, we cannot restrict to  $\Out^0(T)$.

Given a vertex $v$ of $T$, we define $\calc_{i,v}$ 
as the restriction $\calc_i{}_{ | G_v}$ if $G_v$ is cyclic,
as $\calc_i{}_{ |  | G_v}$ if $G_v$ is not cyclic (recall from Subsection \ref{tre} that conjugacy classes  represented by elements fixing an edge of $T$ do not belong to   $\calc_i{}_{ || G_v}$). The tree being bipartite, $\calc_i$ is the disjoint union of the $\calc_{i,v}$'s.

We say that $v$ is used if 
$\calc_{i,v}$ is non-empty. Since there are finitely many $G$-orbits of vertices, we may assume that usedness is independent of $i$; we let $V_u$ be a set of representatives of  orbits of used vertices. 
We may also assume that the type of vertices with non-cyclic stabilizer  (rigid or   QH) is independent of $i$ (QH vertices with $\Sigma$ a pair of pants are rigid, we do not consider them as QH).

We claim that QH vertices $G_v$ of $T$ are not used. Indeed, 
any boundary subgroup of $G_v$ is an incident edge stabilizer 
of $T$: otherwise, $G_v$ would split as a free product relative to $\Inc_v$, contradicting  one-endedness of $G$. Elements in $\calc_i$ are universally elliptic (relative to $\calc_i$), and the only universally elliptic subgroups of $G_v$ are contained in boundary subgroups of $G_v$ because $G_v$ is flexible (see \cite{GL3a}, Proposition 7.6), 
so $\calc_{i||G_v}$ is empty.

For $v\in V_u$, define $\Out_i(G_v)\inc\Out(G_v)$ as the set of automorphisms which fix each conjugacy class in  $\calc_{i,v}$  
and leave the set of incident edge stabilizers globally invariant. 
Any automorphism in $\M(\calc_i)$ 
is an automorphism of $T$ which leaves   $G_v$ invariant (up to conjugacy), 
and   induces an automorphism belonging to $\Out_i(G_v)$. 
Conversely, any automorphism of $T$ satisfying these properties for every $v\in V_u$ lies in $\M(\calc_i)$.
This means that
 $\M(\calc_i)$ is completely determined by the knowledge of the groups $\Out_i(G_v)$, for $v\in V_u$.

We complete the proof by 
showing that there are only finitely many possibilities for each $\Out_i(G_v)$.
This is clear if $G_v$ is cyclic, and   QH vertices are not used, so there 
remains to consider the case where $G_v$ is rigid.

In this case,  $\Out_i(G_v)$ is finite by Lemma \ref{lem_fini} (otherwise  $G_v$ would have a cyclic splitting relative to $\Inc_v$ and $\calc_{i,v}$, contradicting rigidity).
Since $G_v$ is hyperbolic, $\Out(G_v)$ has finitely many conjugacy classes of finite subgroups \cite{GL_extension}. 
  We deduce that there are finitely many possibilities for $\Out_i(G_v)$, up to conjugacy in $\Out(G_v)$. Unfortunately, this is not enough to get finiteness for $\M(\calc_i)$ up to conjugacy in $\Out(G)$, because the conjugator may fail to extend to an automorphism of $G$.

To remedy this, we consider $\M(\Inc_v)$ and $\wh\M(\Inc_v)$, with $\Inc_v$ the family of incident edge groups as in Subsection \ref{tre}, and $\wh\M(\Inc_v)=\wh\Out(G_v;\Inc_v)$ the set of outer automorphisms of $G_v$ preserving $\Inc_v$ (see Definition \ref{pres}; edge groups may be permuted, and the generator of an edge group may be mapped to its inverse).

 The group $\Out_i(G_v)\inc\Out(G_v)$ is   finite and contained in 
 $\wh\M(\Inc_v)$ (but not necessarily in $\M(\Inc_v)$). 
 By \cite{GL_extension}, $\wh\M(\Inc_v)$ has only finitely many conjugacy classes of finite subgroups. 
It follows that there are only finitely many possibilities for $\Out_i(G_v)$ up to conjugation by an element of $\wh\M(\Inc_v)$,
 hence also up to conjugation by an element of   $\M(\Inc_v)$
since $\M(\Inc_v)$ has finite index in  $\wh\M(\Inc_v)$.

We may therefore assume that   
 $\Out_i(G_v)$ is independent of $i$ if  $G_v$ is cyclic and $v\in V_u$, and that all groups $\Out_i(G_v)$  are conjugate by elements of $\M(\Inc_v)$ if $v\in V_u$ is rigid. 
Any element of  $\M(\Inc_v)$ extends ``by the identity'' to an automorphism of $G$ which leaves $T$ invariant and acts trivially 
(as   conjugation by an element of $G$)  on $G_w$ if $w$ is not in the orbit of $v$. 
 Since 
 $\M(\calc_i)$ is determined by the groups $\Out_i(G_v)$ for $v\in V_u$, we conclude that all groups $\M(\calc_i)$ are conjugate in $\Out(G)$.
 \end{proof}

 \renewcommand {\calc} {{\mathcal {H}}}

\begin{proof}[Proof of Proposition \ref{infmc2}]
We view $\Out(F_2)\simeq GL(2,\Z)$ as the mapping class group of a punctured torus $\Sigma$ (with orientation-reversing maps allowed). Let $c$ be a peripheral conjugacy class (representing the commutator of basis elements of $F_2$). 

We consider a McCool group $\M(\calc)\inc\Out(F_2)$. We may assume that $\M(\calc)$ is infinite. 
By the classification of elements of $GL(2,\Z)$, or by the Bestvina-Paulin method and Rips theory, $F_2$ then splits over a cyclic group relative to $\calc$ and $c$ (see for instance Theorem 3.9 of \cite{GL6}). Such a  splitting is dual to a non-peripheral simple closed curve $\gamma \inc \Sigma$. 

If there are two different splittings, they are dual to curves $\gamma,\gamma'$ 
whose union fills $\Sigma$, so $\calc$ only contains peripheral subgroups.
It follows that $\M(\calc)$ is either $\Out(F_2)\simeq GL(2,\Z)$ or $SL(2,\Z)$.
If the splitting is unique, $\M(\calc)$ fixes $\gamma$ (viewed as an unoriented curve  up to isotopy). 
 Since the splitting dual to $\gamma$ is relative to $\calc$,   the Dehn twist $T_\gamma$ around $\gamma$ is contained in 
$\M(\calc)$. 
The stabilizer $\Stab(\gamma)$ of $\gamma$ in the mapping class group 
contains
    $\langle T_\gamma\rangle$ with finite index (the index is 4 because a
    homeomorphism may reverse   the orientation of $\Sigma$ and/or   
    of $\gamma$). 
We thus have  $\langle T_\gamma\rangle\inc \M(\calc)\inc \Stab(\gamma)$, with both indices finite.
Finiteness of $\M(\calc)$ up to conjugacy follows, since $\gamma$ is unique up to the action of the mapping class group.
\end{proof}

The remainder of this appendix is devoted to the proof of Proposition \ref{infmc3}.
 We first record a few useful facts.

\begin{lem}\label{vr}
Fix $n$.
Up to isomorphism,  $\Out(F_n)$ only contains finitely many virtually solvable subgroups.
\end{lem}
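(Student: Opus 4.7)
The plan is to combine three structural results on $\Out(F_n)$ into a counting argument, exploiting the fact that virtual solvability in $\Out(F_n)$ is a very restrictive condition. The inputs I would invoke are: (a) every virtually solvable subgroup of $\Out(F_n)$ is virtually abelian (Alibegovic, and essentially a consequence of the Bestvina--Feighn--Handel Tits alternative); (b) the rank of any abelian subgroup of $\Out(F_n)$ is bounded by a constant depending only on $n$ (namely $2n-3$, by Bestvina--Feighn--Handel); (c) $\Out(F_n)$ is virtually torsion-free, so it admits a torsion-free normal subgroup $\Gamma$ of some finite index $N=N(n)$.

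Given a virtually solvable $H\le\Out(F_n)$, I would first intersect with $\Gamma$. The group $H\cap\Gamma$ is torsion-free and, by (a) and (b), virtually $\Z^r$ for some $r\le 2n-3$. Since a torsion-free group which is virtually free abelian is itself free abelian (Bieberbach), $H\cap\Gamma\cong\Z^r$. This gives an extension
\[
1\to\Z^r\to H\to Q\to 1,
\]
where $Q$ embeds into the finite group $\Out(F_n)/\Gamma$, so $|Q|\le N$.

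It remains to observe that there are only finitely many isomorphism types of such extensions as $H$ varies. The rank $r$ takes only finitely many values; there are only finitely many isomorphism types of finite groups $Q$ of order $\le N$; for each pair $(r,Q)$, the conjugation action of $Q$ on $\Z^r$ factors through a finite subgroup of $GL(r,\Z)$, of which there are only finitely many conjugacy classes (by Jordan--Minkowski), hence only finitely many possibilities up to isomorphism of the resulting $Q$-module; and for each choice of $r$, $Q$ and action, the extension class lies in the finite group $H^2(Q;\Z^r)$. Putting these together gives a finite a priori bound on the number of isomorphism types.

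The only real subtlety, and what I would identify as the main point to be careful about, is ensuring that the bound on $r$ and the bound on $|Q|$ really do depend only on $n$ and not on $H$. The bound on $r$ comes directly from (b), but the bound on the finite quotient $Q$ is not automatic from just ``virtually abelian''; it is obtained by passing to $H\cap\Gamma$ rather than to an abstract finite-index abelian subgroup of $H$. Once that bookkeeping is done, the rest is a purely formal counting argument from the three inputs above.
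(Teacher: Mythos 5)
There is a genuine gap, and it sits exactly at the step you flag as the ``main point to be careful about''. Your assertion that a torsion-free group which is virtually free abelian is itself free abelian is false, and the attribution to Bieberbach is backwards: non-abelian Bieberbach groups are precisely the standard counterexamples. The Klein bottle group $\langle a,b \mid bab\m=a\m\rangle$ is torsion-free and contains $\Z^2$ with index $2$ but is not abelian, and the same happens in every dimension. So from ``$H\cap\Gamma$ is torsion-free and virtually $\Z^r$'' you cannot conclude $H\cap\Gamma\cong\Z^r$; consequently the extension $1\to\Z^r\to H\to Q\to 1$ with $|Q|$ bounded is not established, and the whole counting via finite subgroups of $GL(r,\Z)$ and $H^2(Q;\Z^r)$ (which genuinely needs an abelian kernel) does not get off the ground. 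Note also that intersecting with a torsion-free finite-index subgroup was your substitute for the key quantitative input, so nothing in your argument bounds the index of an abelian subgroup of $H$ in terms of $n$ alone.

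The paper avoids this by quoting a stronger form of Bestvina--Feighn--Handel: a virtually solvable subgroup of $\Out(F_n)$ contains $\Z^k$, $k\le 2n-3$, as a subgroup of index bounded in terms of $n$ only (this is extracted from the proof of their Theorem 1.1), and then finiteness of isomorphism types is immediate from a theorem of Segal on groups containing a free abelian subgroup of bounded rank and bounded index. If you want to salvage your route without that quantitative BFH statement, you would have to treat $H\cap\Gamma$ as what it really is, namely a torsion-free finitely generated virtually abelian (Bieberbach-type) group of dimension at most $2n-3$: Bieberbach's finiteness theorem gives finitely many such groups per dimension, its translation lattice is characteristic and of index bounded in terms of the dimension, and only after that reduction do you recover a normal $\Z^r$ of bounded index in $H$ and can run a counting argument (or simply invoke Segal's theorem as the paper does). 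As written, however, the proof is not correct.
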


\begin{proof}
 Virtually solvable subgroups are virtually abelian (\cite{Ali_translation,BFH_solvable}). More precisely, they contain $\Z^k$ with $k\le2n-3$  as a subgroup of   bounded index (see \cite{BFH_solvable}, proof of Theorem 1.1 page 94). This implies finiteness, for instance by \cite[Th.\ 6 p.\ 176]{Segal_livre}.
\end{proof}

\begin{lem} \label{coh} 
 Let $A$ be virtually cyclic, and $B$ be virtually $F_n$ for some $n$. Up to isomorphism, there are only finitely many groups which are extensions of $A$ by $B$.
\end{lem}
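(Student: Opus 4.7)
The plan is to apply the classical cohomological classification of group extensions. Namely, an extension $1 \to A \to G \to B \to 1$ is determined up to equivalence by an outer action $\psi \colon B \to \Out(A)$ together with a cohomology class in $H^2(B; Z(A))$, where $Z(A)$ is regarded as a $B$-module via $\psi$. Since equivalent extensions have isomorphic total groups, it suffices to show that the set of such pairs $(\psi, [c])$ is finite.

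The first step is to observe that $\Out(A)$ is finite. This is standard for a finitely generated virtually cyclic group: $A$ contains a characteristic infinite cyclic subgroup $K$ of finite index (take the intersection of all subgroups of a fixed sufficiently large finite index), and the natural map $\Aut(A) \to \Aut(K)$ together with the induced action on the finite quotient $A/K$ shows that $\Aut(A)/\mathrm{Inn}(A)$ is finite (the verification must include the infinite dihedral case, where $\Aut(A)$ itself is infinite but the extra inner automorphisms absorb the kernel). Since $B$ is finitely generated, being virtually $F_n$, there are then only finitely many homomorphisms $\psi \colon B \to \Out(A)$.

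For the second step, I fix $\psi$ and show that $H^2(B; Z(A))$ is finite. Choose a normal torsion-free subgroup $F \trianglelefteq B$ of finite index; such an $F$ exists and is free of finite rank because $B$ is virtually $F_n$. Since $F$ is free, $H^q(F; Z(A)) = 0$ for $q \geq 2$, while $H^0(F; Z(A))$ and $H^1(F; Z(A))$ are finitely generated abelian groups because $Z(A)$ is. The Lyndon--Hochschild--Serre spectral sequence for $F \trianglelefteq B$ with finite quotient $B/F$ then presents $H^2(B; Z(A))$ as a finitely generated abelian group. On the other hand, the composition $\operatorname{cor} \circ \operatorname{res}$ on $H^2(B; Z(A))$ equals multiplication by $[B:F]$ and factors through $H^2(F; Z(A)) = 0$, so $[B:F]$ annihilates $H^2(B; Z(A))$. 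A finitely generated abelian group killed by a positive integer is finite.

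Combining the two steps yields finitely many pairs $(\psi, [c])$, hence finitely many equivalence classes of extensions, and thus finitely many isomorphism classes of the total group $G$. The only slightly delicate point is to carry out the Lyndon--Hochschild--Serre and transfer arguments with the twisted $B$-module structure on $Z(A)$ coming from $\psi$; this is standard in this generality, and I do not anticipate any real obstacle.
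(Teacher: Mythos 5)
Your proposal is correct and takes essentially the same route as the paper: the paper's proof is a one-line appeal to standard extension theory (Brown, Sections III.10 and IV.6), resting on exactly the two facts you establish, namely that $\Out(A)$ is finite and that $H^2(B;Z(A))$ is finite because $B$ has a finite-index free subgroup with vanishing $H^2$ (the restriction--corestriction argument). Your write-up simply makes the classification by outer action plus $H^2(B;Z(A))$ and the finiteness of that cohomology group explicit.
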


\begin{proof}
 This follows from standard extension theory (\cite{Brown_cohomology}, sections III.10 and IV.6), noting that $\Out(A)$ is finite and $B$ has a finite index subgroup with trivial $H^2$.
\end{proof}

Now consider a McCool group $\M(\calc)\inc\Out(F_3)$. The first step is to reduce to the case where $F_3$ is freely indecomposable relative to $\calc$. If this does not hold, let $\Gamma$ be a   Grushko decomposition relative to $\calc$ (see Subsection \ref{jsj}). It is not unique, we choose one with as few edges as possible. 

If all vertex groups are cyclic, groups 
in $\calc$ are 
generated (up to conjugacy) by powers of elements belonging to  some fixed basis of $F_3$, and finiteness holds. Otherwise,   there is a vertex group $G_v\simeq F_2$. Our choice of $\Gamma$ implies that $\Gamma$ has a single  edge (it is  an HNN extension,  or an amalgam $F_2*\Z$ with a finite index subgroup of $\Z$ belonging to $\calc$). It follows that  $\Gamma$ is $\M(\calc)$-invariant (\cite{For_deformation,Lev_rigid}), and $\M(\calc)$ is determined  by its image in $\Out(F_2)$. This  image is the McCool group $\M(\calc_{ | F_2})$, so finiteness follows from Proposition \ref{infmc2}.  

We continue the proof under the assumption that $F_3$ is freely indecomposable relative to $\calc$. Let $\Gcan$ be the canonical $\M(\calc)$-invariant cyclic JSJ decomposition relative to $\calc$
(see Subsection \ref{jsj}). Vertex groups $G_v$ are cyclic, rigid, or QH.  

One easily checks the formula $\sum_v (\mathrm{rk}\,G_v -1) =2$.
In particular, $\mathrm{rk}\, G_v\leq 3$ for all $v$, and if some $G_v$ is isomorphic to $F_3$, then all other
vertex groups are cyclic.

If $G_v\simeq\pi_1(\Sigma)$ is a QH vertex group, it is  isomorphic to $F_2$ or $F_3$, so  there are 9 possibilities for  the compact surface $\Sigma$: 
\setlist{topsep=0mm,itemsep=.0mm,parsep=.5mm}

\begin{enumerate}
 \item pair of pants
 \item sphere with   4 boundary components
 \item projective plane with 2   boundary components
 \item projective plane with   3 boundary components
  \item torus  with 1  boundary component 
   \item torus   with   2 boundary components
    \item   Klein bottle with 1  boundary component 
     \item   Klein bottle with  2 boundary components
      \item non-orientable surface of genus 3 with 1 boundary component.
\end{enumerate}

Each incident edge group $G_e$ is (up to conjugacy) a boundary subgroup of $\pi_1(\Sigma)$. 
Conversely, there are two possibilities for  a  boundary subgroup $C$. If it is an incident edge group,   it equals   $G_e$ for a unique incident edge. If not, we  say that   the corresponding boundary component of $\Sigma$ is \emph{free}; in this case some finite index subgroup  of $C$ belongs to  $\calc$.

As in Subsection \ref{automs}, the finite index subgroup $\M^0(\calc)$ of $\M(\calc)$ acting trivially on $\Gcan$ maps to $\prod_v \Out(G_v)$ with kernel the group of twists $\calt$. The image in $\Out(G_v)$ is  finite if $G_v$ is cyclic or rigid, virtually the mapping class group  of $\Sigma$ if $G_v$ is QH, and $\calt$ is isomorphic to some $\Z^k$ (see Subsection 4.3 of \cite{GL6}). 

By   mapping class group, we   mean   the group of isotopy classes of homeomorphisms of a compact surface $\Sigma$ mapping each boundary component to itself in an orientation-preserving way. 
We denote it by $\calp\calm^+(\Sigma)$ as in Subsection \ref{jsj}.

By Lemma \ref{vr}, we may assume that there is a QH vertex $v$ with $\calp\calm^+(\Sigma)$ non-solvable. As explained above, there are 9 possibilities for $\Sigma$.
Cases  1, 3, 7 are ruled out  
because $\calp\calm^+(\Sigma)$ is virtually cyclic (see \cite{Szepietowski_presentation}, or argue as in  the proof of    Proposition \ref{infmc2}, noting that  a finite index subgroup of $\calp\calm^+(\Sigma)$ fixes a conjugacy class of $F_2$ which is not a  power of the commutator).

If $\Gcan$ is trivial (i.e.\ if the QH subgroup $G_v $ is the whole group),  
$\M(\calc)$ is the mapping class group of $\Sigma$. We therefore assume that $\Gcan$ is non-trivial.

\begin{lem} \label{ol}
If $G_v$ has rank 3,  
 then  $\Sigma$ has a free boundary component. 
\end{lem}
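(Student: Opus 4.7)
The plan is to argue by contradiction: assume every boundary component of $\Sigma$ is non-free. Since $G_v$ has rank $3$, the formula $\sum_v(\mathrm{rk}\,G_v - 1) = 2$ forces every other vertex group in $\Gcan/G$ to be cyclic, and the bipartite structure of the tree of cylinders then forces every edge of $\Gcan/G$ to be incident to $v$. Each non-free boundary $B_i = \langle c_i\rangle$ of $G_v = \pi_1(\Sigma)$ is therefore the stabilizer of a unique incident edge at $v$ whose opposite endpoint carries a maximal cyclic group $\langle d_i\rangle \supset B_i$, with $c_i = d_i^{n_i}$ for some $n_i \geq 1$. If $n_i = 1$ the edge represents a trivial (non-essential) splitting, absent from the regular JSJ, which would make $c_i$ a free boundary; so, after reducing, we may assume $n_i \geq 2$ for every $i$.

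Next I would use the graph-of-groups presentation of $G$ and eliminate $c_i = d_i^{n_i}$ from the standard surface relation of $\pi_1(\Sigma)$. In each of the rank-$3$ cases (2, 4, 6, 8, 9) the ambient free group $\pi_1(\Sigma)$ has rank $2g+b = 4$ (orientable) or $\gamma+b = 4$ (non-orientable), so this yields a presentation of $G$ (possibly with some HNN stable letters) on four generators with a single ``surface-type'' relator
\[
r = \prod_l [a_l, b_l] \prod_i d_i^{n_i} \quad\text{(orientable)}\quad\text{or}\quad r = \prod_l a_l^2 \prod_i d_i^{n_i} \quad\text{(non-orientable).}
\]
For $G = F_3$, the identity $r=1$ must hold in $F_3$ after realizing the generators $a_l, b_l, d_i$ as specific elements of $F_3$.

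To derive the contradiction I would invoke the root-existence obstruction in free groups: solving $r = 1$ for a single $d_k^{n_k}$ expresses it as an explicit word $w_k$ in the remaining generators of $F_3$. Since in a free group an element is an $n$-th power (for $n \geq 2$) only when its cyclically reduced form is $n$-periodic, it suffices to observe that in each of the five surface types the cyclically reduced form of $w_k$ contains a uniquely occurring ``special'' subword --- a single commutator $[a_l, b_l]$, a single square $a_l^2$, or a single $d_i^{n_i}$-block with $i \neq k$ --- that obstructs $n_k$-periodicity when $n_k \geq 2$. Hence no root $d_k \in F_3$ can exist, the required contradiction.

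The hard part will be verifying the periodicity obstruction uniformly in all five surface topologies, and handling the possibility that distinct boundary subgroups of $G_v$ become conjugate in $G$ (so attached to a common $V_1$-vertex via HNN identifications, making the relator of $G$ a little more intricate than the one displayed above). In each case, however, the argument reduces to the elementary fact that a cyclically reduced word containing a unique ``special'' subword cannot be a non-trivial power in a free group.
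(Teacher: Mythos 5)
Your strategy (assume every boundary component is attached, write $G\simeq F_3$ as a one-relator-type group with surface relator $\prod_l[a_l,b_l]\prod_i d_i^{n_i}$ or $\prod_l a_l^2\prod_i d_i^{n_i}$, and show this is impossible) is reasonable in spirit, but as written it has two genuine gaps. First, the reduction to $n_i\ge 2$ is not correct: if $n_i=1$, the corresponding cylinder vertex of $\Gcan$ cannot simply be discarded. Minimality of $\Tcan$ only rules out a valence-one cylinder vertex whose group equals the edge group; so the case $n_i=1$ forces that cylinder to carry at least two incident edges, i.e.\ two boundary components of $\Sigma$ conjugate into the same maximal cyclic subgroup, producing loops in the graph (stable letters) and a relator quite different from the displayed one. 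You acknowledge this configuration but defer it; it is not a marginal nuisance, it is one of the configurations the lemma must actually exclude, so no reduction to ``all $n_i\ge 2$, tree-shaped graph'' is available.

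Second, and more seriously, the central ``root-existence obstruction'' is not a valid argument. The letters $a_l,b_l,d_i$ (and stable letters) form a generating set of the one-relator group $G\simeq F_3$, not a free basis of $F_3$. Solving $r=1$ for $d_k^{n_k}$ gives an identity $d_k^{n_k}=w_k$ between \emph{unknown} elements of $F_3$, and the cyclically reduced form of $w_k$ as a formal word in the remaining letters says nothing about whether its image in $F_3$ is a proper power: cancellation after rewriting in an actual basis of $F_3$ is completely uncontrolled (note also that $w_k$ \emph{is} an $n_k$-th power in $G$ by construction, so the contradiction can only come from freeness of $G$, not from a subword pattern). What is really needed is a primitivity statement: that a graph of free groups with cyclic edge groups can have free fundamental group only if edge elements are basis elements in suitable vertex groups (Shenitzer/Swarup-type results, or equivalently the fact that a one-relator presentation defines a free group only if the relator is primitive, plus non-primitivity of surface-type relators with all $n_i\ge2$ and of their HNN variants). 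This is exactly what the paper invokes: Lemma 4.1 of Bestvina--Feighn's \emph{Outer limits}, a generalization of the standard fact that an amalgam $A*_{\langle c\rangle}B$ of free groups is free only if $c$ is a basis element of $A$ or $B$, applied to $\Gcan$ in which all boundary components of $\Sigma$ would be attached. In other words, the ``hard part'' you defer is the entire content of the lemma, and the elementary periodicity device you propose in its place does not work.
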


\begin{proof}
This follows from Lemma 4.1 of \cite{BF_outer}, a generalization of the standard fact that a cyclic amalgam $A*_{\langle c\rangle}B$ of free groups is free only if $c$ belongs to a basis in $A$ or $B$.  
\end{proof}

This lemma   rules out case 9.

Now suppose  that all vertices of $\Gcan$ other than $v$ are terminal vertices carrying $\Z$ (by Lemma \ref{ol}, this holds  in cases 6 and 8). In this case the group of twists $\calt$ is trivial (see Proposition 3.1 of \cite{Lev_automorphisms}). The group $\M(\calc)$ contains $\calp\calm^+(\Sigma)$ with finite index, and there are finitely many possibilities: they depend on whether edges of $\Gcan$ may be permuted, and whether elements in edge groups may be mapped to their inverse.

We must now deal with cases 2, 4, 5. We start with 4. The only possibility left is that $\Gcan$ has two vertices $v,w$ joined by 2 edges, with $G_w$ 
cyclic. 
Every automorphism leaving $\Gcan$ invariant maps $G_v$ to itself (up to conjugacy), and 
we consider the natural map from $\M(\calc)$ to $\Out(G_v)$. As above, the image contains $\calp\calm^+(\Sigma)$ with finite index, and there are finitely many possibilities. The kernel is the group of twists $\calt$, which is isomorphic to $ \Z$. 
 Since $\calp\calm^+(\Sigma)$ is isomorphic to $F_3$ by Theorem 7.5 of \cite{Szepietowski_presentation}, we conclude by Lemma \ref{coh}.

The argument in case 2 is similar. Besides $v$ and $w$, there may be another vertex $w'$, with $G_{w'}$ cyclic and a single edge between $v$ and $w'$. The group $\calp\calm^+(\Sigma)$ is again free, it is isomorphic to $F_2$  (see for instance  \cite{FaMa_primer}, 4.2.4). 

In case 5 (once-punctured torus), there is a single edge incident to $v$. Collapsing all other edges yields an $\M(\calc)$-invariant decomposition as an amalgam $F_3=G_v*_{\langle a\rangle}G_w$ with $G_w\simeq F_2$.  By the standard fact recalled above,    $a$ belongs to a basis of  $G_w$ (and is equal to a commutator in $G_v$).   The group $\M(\calc)$ acts trivially on the graph underlying this amalgam, and
the map $\rho$ (see Subsection \ref{automs}) maps $\M(\calc)$ to $\Out(G_v)\times \Out(G_w)$, with kernel the group of twists $\calt$, isomorphic to $\Z$. The image in $\Out(G_v)$ is isomorphic to $GL(2,\Z)$ or $SL(2,\Z)$.

We now consider the image $L$ of $\M(\calc)$ in $\Out(G_w)$.
It preserves the conjugacy class of $\grp{a}$.
 If $L$ is finite (necessarily of order $\le 6$), then $\M(\calc)$ maps onto $GL(2,\Z)$ or $SL(2,\Z)$ with virtually cyclic kernel $K$; there are finitely many possibilities for $K$ up to isomorphism (it maps to $L$ with cyclic kernel), and we conclude by Lemma \ref{coh}. 
As explained in the proof of Proposition \ref{infmc2}, if $L$ is infinite, it 
is virtually cyclic, contains a ``Dehn twist'' $T_a$, and has index at most 4 in the stabilizer of the conjugacy class of $\grp{a}$ in $\Out(G_w)$.
Since $\M(\calc)$ is determined by its image in $\Out(G_v)\times \Out(G_w)$, and this image contains $SL(2,\Z)\times \langle T_a\rangle$,  this leaves only finitely many possibilities.

 \small

\medskip

\begin{flushleft}
 Vincent Guirardel\\
Institut de Recherche Math\'ematique de Rennes\\
Membre de l'institut universitaire de France\\
Universit\'e de Rennes 1 et CNRS (UMR 6625)\\
263 avenue du G\'en\'eral Leclerc, CS 74205\\
F-35042  RENNES C\'edex\\
\emph{e-mail:} \texttt{vincent.guirardel@univ-rennes1.fr}\\[8mm]

Gilbert Levitt\\
Laboratoire de Math\'ematiques Nicolas Oresme\\
Universit\'e de Caen et CNRS (UMR 6139)\\
BP 5186\\
F-14032 Caen Cedex\\
France\\
\emph{e-mail:} \texttt{levitt@unicaen.fr}\\

\end{flushleft}


\begin{thebibliography}{AAA99}

\bibitem[Ali02]{Ali_translation}
Emina Alibegovi{\'c}.
\newblock Translation lengths in {${\rm Out}(F\sb n)$}.
\newblock {\em Geom. Dedicata}, 92:87--93, 2002.
\newblock Dedicated to John Stallings on the occasion of his 65th birthday.

\bibitem[AMS13]{AMS_commensurating}
Yago Antolin, Ashot Minasyan, and Alessandro Sisto.
\newblock Commensurating endomorphisms of acylindrically hyperbolic groups and
  applications. \newblock arXiv:1310.8605, 2013.


\bibitem[BF91]{BF_bounding}
Mladen Bestvina and Mark Feighn.
\newblock Bounding the complexity of simplicial group actions on trees.
\newblock {\em Invent. Math.}, 103(3):449--469, 1991.

\bibitem[BF]{BF_outer}
Mladen Bestvina and Mark Feighn.
\newblock Outer limits.
\newblock Preprint (1992),
  \url{http://andromeda.rutgers.edu/\~{}feighn/papers/outer.pdf}.


\bibitem[BFH04]{BFH_solvable}
Mladen Bestvina, Mark Feighn, and Michael Handel.
\newblock Solvable subgroups of {${\rm Out}(F_n)$} are virtually {A}belian.
\newblock {\em Geom. Dedicata}, 104:71--96, 2004.

\bibitem[Bor66]{Borel_density}
Armand Borel.
\newblock Density and maximality of arithmetic subgroups.
\newblock {\em J. Reine Angew. Math.}, 224:78--89, 1966.


\bibitem[BS73]{BorelSerre_corners}
A.~Borel and J.-P. Serre.
\newblock Corners and arithmetic groups.
\newblock {\em Comment. Math. Helv.}, 48:436--491, 1973.
 

\bibitem[Bro82]{Brown_cohomology}
Kenneth~S. Brown.
\newblock {\em Cohomology of groups}, volume~87 of {\em Graduate Texts in
  Mathematics}.
\newblock Springer-Verlag, New York, 1982.

\bibitem[CV86]{CuVo_moduli}
Marc Culler and Karen Vogtmann.
\newblock Moduli of graphs and automorphisms of free groups.
\newblock {\em Invent. Math.}, 84(1):91--119, 1986.

\bibitem[Dah03]{Dah_classifying}
Fran{\c{c}}ois Dahmani.
\newblock Classifying spaces and boundaries for relatively hyperbolic groups.
\newblock {\em Proc. London Math. Soc. (3)}, 86(3):666--684, 2003.

\bibitem[DG08]{DaGr_isomorphism}
Fran{\c{c}}ois Dahmani and Daniel Groves.
\newblock The isomorphism problem for toral relatively hyperbolic groups.
\newblock {\em Publ. Math. Inst. Hautes \'Etudes Sci.}, 107:211--290, 2008.

\bibitem[DG11]{DG2}
Fran{\c{c}}ois Dahmani and Vincent Guirardel.
\newblock The isomorphism problem for all hyperbolic groups.
\newblock {\em Geom. Funct. Anal.}, 21(2):223--300, 2011.

\bibitem[FM12]{FaMa_primer}
Benson Farb and Dan Margalit.
\newblock {\em A primer on mapping class groups}, volume~49 of {\em Princeton
  Mathematical Series}.
\newblock Princeton University Press, Princeton, NJ, 2012.

\bibitem[For02]{For_deformation}
Max Forester.
\newblock Deformation and rigidity of simplicial group actions on trees.
\newblock {\em Geom. Topol.}, 6:219--267 (electronic), 2002.

\bibitem[Geo08]{Geoghegan_topological}
Ross Geoghegan.
\newblock {\em Topological methods in group theory}, volume 243 of {\em
  Graduate Texts in Mathematics}.
\newblock Springer, New York, 2008.

\bibitem[Gui08]{Gui_actions}
Vincent Guirardel.
\newblock Actions of finitely generated groups on {$\Bbb R$}-trees.
\newblock {\em Ann. Inst. Fourier (Grenoble)}, 58(1):159--211, 2008.


\bibitem[GL07a]{GL2}
Vincent Guirardel and Gilbert Levitt.
\newblock Deformation spaces of trees.
\newblock {\em Groups Geom. Dyn.}, 1(2):135--181, 2007.

\bibitem[GL07b]{GL1}
Vincent Guirardel and Gilbert Levitt.
\newblock The outer space of a free product.
\newblock {\em Proc. Lond. Math. Soc. (3)}, 94(3):695--714, 2007.

\bibitem[GL09]{GL3a}
Vincent Guirardel and Gilbert Levitt.
\newblock {JSJ} decompositions: definitions, existence and uniqueness. {I}: The
  {JSJ} deformation space.
\newblock arXiv:0911.3173 v2 [math.GR], 2009.

\bibitem[GL10]{GL3b}
Vincent Guirardel and Gilbert Levitt.
\newblock {JSJ} decompositions: definitions, existence and uniqueness. {II}:
  Compatibility and acylindricity.
\newblock arXiv:1002.4564 v2 [math.GR], 2010.

\bibitem[GL11]{GL4}
Vincent Guirardel and Gilbert Levitt.
\newblock Trees of cylinders and canonical splittings.
\newblock {\em Geom. Topol.}, 15(2):977--1012, 2011.

\bibitem[GL12]{GL6}
Vincent Guirardel and Gilbert Levitt.
\newblock Splittings and automorphisms of relatively hyperbolic groups, 2012.
\newblock {\em Geometry Groups and Dynamics}, to appear. arXiv:1212.1434 [math.GR].

\bibitem[GL13]{GL_vertex}
Vincent Guirardel and Gilbert Levitt.
\newblock Vertex finiteness for splittings of relatively hyperbolic groups.
 \newblock  
  arXiv:1311.2835, 2013
  
  \bibitem[GLa]{GL_extension}
Vincent Guirardel and Gilbert Levitt.
\newblock Extension finiteness for relatively hyperbolic groups.
\newblock In preparation.

\bibitem[GLb]{GL7}
Vincent Guirardel and Gilbert Levitt.
\newblock Strata and canonical subtrees of {$\mathbb R$}-trees.
\newblock In preparation.


\bibitem[HM13]{HaMo_announcement}
Michael Handel and Lee Mosher.
\newblock Subgroup decomposition in {$Out(F_n)$}: Introduction and research
  announcement. \newblock	arXiv:1302.2681, 2013.

\bibitem[Lev05a]{Lev_automorphisms}
Gilbert Levitt.
\newblock Automorphisms of hyperbolic groups and graphs of groups.
\newblock {\em Geom. Dedicata}, 114:49--70, 2005.

\bibitem[Lev05b]{Lev_rigid}
Gilbert Levitt.
\newblock Characterizing rigid simplicial actions on trees.
\newblock In {\em Geometric methods in group theory}, volume 372 of {\em
  Contemp. Math.}, pages 27--33. Amer. Math. Soc., Providence, RI, 2005.

\bibitem[MV04]{MaVe_fixed}
A.~Martino and E.~Ventura.
\newblock Fixed subgroups are compressed in free groups.
\newblock {\em Comm. Algebra}, 32(10):3921--3935, 2004.

\bibitem[McC75]{McCool_fp}
James McCool.
\newblock Some finitely presented subgroups of the automorphism group of a free
  group.
\newblock {\em J. Algebra}, 35:205--213, 1975.

\bibitem[MO10]{MiOs_normal}
A.~Minasyan and D.~Osin.
\newblock Normal automorphisms of relatively hyperbolic groups.
\newblock {\em Trans. Amer. Math. Soc.}, 362(11):6079--6103, 2010.

\bibitem[Seg83]{Segal_livre}
Daniel Segal.
\newblock {\em Polycyclic groups}, volume~82 of {\em Cambridge Tracts in
  Mathematics}.
\newblock Cambridge University Press, Cambridge, 1983.

\bibitem[Sel97]{Sela_acylindrical}
Z.~Sela.
\newblock Acylindrical accessibility for groups.
\newblock {\em Invent. Math.}, 129(3):527--565, 1997.

\bibitem[Sze08]{Szepietowski_presentation}
B{\l}a{\.z}ej Szepietowski.
\newblock A presentation for the mapping class group of a non-orientable
  surface from the action on the complex of curves.
\newblock {\em Osaka J. Math.}, 45(2):283--326, 2008.

\end{thebibliography}
\end{document}